\numberwithin{equation}{section}
\newtheorem{thm}{Theorem}[section]
\newtheorem{cor}[thm]{Corollary}
\newtheorem{lem}[thm]{Lemma}
\newtheorem{prop}[thm]{Proposition}
\theoremstyle{definition}
\newtheorem{defn}[thm]{Definition}
\newtheorem{ex}[thm]{Example}
\newtheorem{rmk}[thm]{Remark}
\newtheorem{setup}[thm]{Set-up}
\newcommand{\Aut}{\mathrm{Aut}}
\newcommand{\CH}{\mathrm{CH}}
\newcommand{\ch}{\mathrm{ch}}
\newcommand{\Cl}{\mathrm{Cl}}
\newcommand{\Def}{\mathrm{Def}}
\newcommand{\Ext}{\mathrm{Ext}}
\newcommand{\Hom}{\mathrm{Hom}}
\newcommand{\Ind}{\mathrm{Ind}}
\newcommand{\Irr}{\mathrm{Irr}}
\newcommand{\rk}{\mathrm{rk\,}}
\newcommand{\Res}{\mathrm{Res}}
\newcommand{\Sing}{\mathrm{sing}}
\newcommand{\sm}{\mathrm{sm}}
\newcommand{\Supp}{\mathrm{Supp\,}}
\newcommand{\CC}{\mathbb{C}}
\newcommand{\PP}{\mathbb{P}}
\newcommand{\QQ}{\mathbb{Q}}
\newcommand{\ZZ}{\mathbb{Z}}
\newcommand{\sA}{\mathcal{A}}
\newcommand{\sC}{\mathcal{C}}
\newcommand{\sE}{\mathcal{E}}
\newcommand{\sF}{\mathcal{F}}
\newcommand{\sI}{\mathcal{I}}
\newcommand{\sK}{\mathcal{K}}
\newcommand{\sL}{\mathcal{L}}
\newcommand{\sM}{\mathcal{M}}
\newcommand{\sO}{\mathcal{O}}
\newcommand{\sS}{\mathcal{S}}
\newcommand{\sT}{\mathcal{T}}
\newcommand{\tC}{\widetilde{C}}
\newcommand{\tD}{\widetilde{D}}
\newcommand{\tG}{\widetilde{G}}
\newcommand{\tP}{\widetilde{P}}
\newcommand{\tS}{\widetilde{S}}
\newcommand{\tT}{\widetilde{T}}
\newcommand{\tX}{\widetilde{X}}
\newcommand{\tZ}{\widetilde{Z}}
\newcommand{\oG}{\overline{G}}
\newcommand{\oH}{\overline{H}}
\newcommand{\oK}{\overline{K}}
\newcommand{\oM}{\overline{M}}
\newcommand{\oS}{\overline{S}}
\newcommand{\oT}{\overline{T}}
\newcommand{\tor}{\mathrm{tor}}
\newcommand{\chara}{\mathrm{char}}
\newcommand{\reg}{\mathrm{reg}}
\newcommand{\td}{\mathrm{td}}
\newcommand{\tr}{\mathrm{tr}}
\begin{document}

\title[cohomological representations of finite automorphism groups]{On the cohomological representations of finite automorphism groups of singular curves and compact complex spaces}

\author{Qing Liu}
\author{Wenfei Liu}

\address{Univ. Bordeaux, CNRS, IMB, UMR 5251, F-33400 Talence, France}
\email{qing.liu@math.u-bordeaux.fr}

\address{School of Mathematical Sciences, Xiamen University, Siming South Road 422, Xiamen, Fujian Province, P.~R.~China}
\email{wliu@xmu.edu.cn}

\date{January 9, 2026}

\begin{abstract}
Let $G$ be a finite group acting tamely on a proper reduced curve $C$
over an algebraically closed field $k$.    We study the $G$-module
structure on the cohomology groups of a $G$-equivariant locally free 
 sheaf $\sF$ on $C$ and give formulas of Chevalley--Weil type, with values in the Grothendieck ring $R_k(G)_\QQ$ of finitely generated $G$-modules.  
We also provide a similar formula for the singular cohomology of compact complex spaces.

Our focus is on the case where $C$ is nodal. Using the Chevalley--Weil formula, we compute the $G$-invariant part $H^0(C, \omega_C^{\otimes m})^G$ for $m\geq 1$. In turn, we use the formula for $\dim H^0(C, \omega_C^{\otimes 2})^G$ to compute the equivariant deformation space of a stable $G$-curve $C$. We also obtain numerical criteria for the presence of any given irreducible representation in $H^0(C, \omega_C\otimes \sA)$, where $\sA$ is an ample locally free $G$-sheaf on $C$. Some new phenomena, pathological compared to the smooth curve case, are discussed. 
\end{abstract}

\keywords{finite automorphism group, cohomology representation, Chevalley--Weil formula, curves, compact complex spaces}
\subjclass[2020]{14H37, 14J50, 32M18}
\thanks{The second author was supported by the NSFC (No.~12571046).}

\maketitle

\tableofcontents

\section{Introduction} 
Let $X$ be a projective variety over a field $k$, let $G$ be a finite
group acting on $X$, and $\sF$ a $G$-equivariant sheaf (also called a $G$-sheaf) of
$k$-vector spaces. Then $G$ acts on the cohomology groups $H^*(X, \sF)$, and it is a natural and important problem to determine $H^*(X, \sF)$ as a $G$-representation. The formulas of Chevalley--Weil \cite{CW34, Wei35} and Broughton \cite{Bro87} achieve this for the case where $X$ is a complex smooth projective curve and $\sF$ is a pluricanonical sheaf $\omega_X^{\otimes m}$ with $n\geq 1$ or the locally constant sheaf $\CC_X$ respectively.\footnote{In retrospect, Broughton's formula for $H^1(X, \CC)$, where $X$ is a smooth projective curve over $\CC$, follows easily from Chevalley--Weil's formula of $H^0(X, \omega_X)$ together with the Hodge decomposition of $H^1(X, \CC)$.} 

The key feature of the Chevalley--Weil formulas in \cite{CW34, Wei35} is that, as $G$-representations, the cohomology groups $H^0(C, \omega_C^{\otimes m})$ for the pluricanonial sheaves of a complex smooth projective $G$-curve $C$ can be expressed as a rational multiple of the regular representation $\CC[G]$ plus a correction term determined by the inertia locus of the $G$-action. It is in the same spirit of the holomorphic Lefschetz fixed point theorem, as developed by \cite{AS68} for compact complex manifolds with a finite group action and  generalized by \cite{Don69} to the algebraic setting. 

This observation was made by Ellingsrud--L\o nsted \cite{EL80}, who used the idea of equivariant K-theory to set up a fairly general framework for studying the $G$-module structure on the cohomology groups of $G$-equivariant sheaves on a projective $G$-variety $X$ over any algebraically closed base field $k$. They define the $G$-equivariant Euler characteristic (called Lefschetz trace in \cite{EL80})
\[
\chi_G(\sE)=\sum_i (-1)^i[H^i(X,\sE)]
\]
for locally free $G$-sheaves $\sE$, as an element in the Grothendieck
ring $R_k(G)$ of $G$-modules.   However, when writing down an explicit formula, one needs to restrict to the case of
a smooth projective curve. Later, Kani and Nakajima \cite{Kan86,
  Nak86b} introduced the notion of ramification module to encode the
contribution of the fixed locus. K\"ock \cite{Koc05} clarified the connection of the Chevalley--Weil formula for locally
free $G$-sheaves on a smooth projective curve to the holomorphic
Lefschetz fixed point theorems of \cite{AS68} and  \cite{Don69}. There
is on-going interest in refining and reproving the Chevalley--Weil
formula for smooth curves, or spelling it out more explicitly
in specific situations (see, e.g., \cite{Tam51, MV81, Hur92,
  Bor03, Bor06, Hor12, KK12, KK13, Can18, KFW19, BCK20, CKKK22,
  Ara22, BW23, Gar23,   CL24, LK24}).

Yubo Tong \cite{Ton25} made a first attempt to write down a Chevalley--Weil type formula
for the canonical sheaf $\omega_C$ of a \emph{nodal} curve $C$ with a faithful tame
action of a finite group $G$. However, his assumption that the
quotient curve is smooth is quite restrictive. 
In fact, if $G$ is cyclic, then the smoothness condition on the
quotient curve forces the order $|G|$ to be at most $2$. Also,
he dealt only with the canonical sheaf $\omega_C$.

In the present work, we treat the general case of locally free $G$-sheaves $\sE$ on a proper reduced singular $G$-curve $C$.

\begin{thm}[{= Theorem~\ref{thm: CW singular}}]\label{thm: CW}
Let $C$ be a proper reduced curve over an algebraically closed field
$k$, and $G$ a finite group acting tamely on
  $C$ (see Definition~\ref{defn:tame_action}).  Then for 
any locally free $G$-sheaf $\sE$ of rank $r$ on $C$, we
have the following equality in $R_k(G)_\QQ$: 
\begin{equation}\label{eq: CW intro}
\chi_G(\sE)= \left(\frac{1}{|G|}\chi(\sE)\right)[k[G]] + \sum_{Z\subset C}\Gamma_G(\sE)_Z,
\end{equation}
where $Z$ ranges over all irreducible subvarieties of $C$ with nontrivial inertia group $G_Z$ as well as the singular points of $C$, and the ramification modules $\Gamma_G(\sE)_Z$ are defined in
Definitions~\ref{defn: ramification module sm} and \ref{defn: ramification module singular}. 
\end{thm}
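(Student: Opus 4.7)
The plan is to reduce the singular case to the smooth case via the $G$-equivariant normalization $\pi\colon \tC\to C$ and keep careful track of the correction coming from the singular locus. Since $\pi$ is finite and $G$-equivariant, and since $\sE$ is locally free, the projection formula gives $\pi_*\pi^*\sE\cong \sE\otimes_{\sO_C}\pi_*\sO_{\tC}$. Combining this with the fundamental exact sequence
\begin{equation*}
0\to \sO_C \to \pi_*\sO_{\tC}\to \sQ \to 0
\end{equation*}
(with $\sQ$ a torsion $G$-sheaf supported on the singular locus) and tensoring with the locally free $\sE$ yields the short exact sequence of $G$-sheaves
\begin{equation*}
0\to \sE\to \pi_*\pi^*\sE \to \sE\otimes \sQ\to 0.
\end{equation*}
Taking the equivariant Euler characteristic and using that $\pi$ is affine, I obtain
\begin{equation*}
\chi_G(\sE)=\chi_G(\pi^*\sE)-\chi_G(\sE\otimes\sQ) \qquad \text{in } R_k(G)_\QQ.
\end{equation*}

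Next I would apply the already established smooth-curve Chevalley--Weil formula (the case treated, for instance, by Kani--Nakajima and K\"ock) to $\pi^*\sE$ on $\tC$. This expresses $\chi_G(\pi^*\sE)$ as $\frac{1}{|G|}\chi(\pi^*\sE)[k[G]]$ plus a sum of ramification modules $\Gamma_G(\pi^*\sE)_{\tZ}$ over irreducible $\tZ\subset\tC$ with nontrivial inertia, these being the terms referenced by Definition~\ref{defn: ramification module sm}. I would then substitute $\chi(\pi^*\sE)=\chi(\sE)+\dim_k H^0(C,\sE\otimes\sQ)$ (the latter being an honest dimension since $\sE\otimes\sQ$ is supported on points) so that the regular-representation term splits into the desired $\frac{1}{|G|}\chi(\sE)[k[G]]$ plus an extra multiple of $[k[G]]$ coming from $\sQ$.

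The final step is to regroup. Any $\tZ\subset \tC$ with nontrivial inertia that maps to a smooth point of $C$ contributes exactly the ramification module of $\sE$ at the image point, so those terms combine into the part of $\sum_Z \Gamma_G(\sE)_Z$ indexed by smooth inertia points. The remaining terms to account for are those coming from singular points $p\in C$: the $G$-orbit of $p$ picks up contributions from the points $\tp$ of $\tC$ above $p$ (which include all the ramification-module terms for $\tp\in\pi^{-1}(p)$ as well as the correction term from $\sE\otimes\sQ$ supported on the $G$-orbit of $p$, minus the extra $[k[G]]$-multiple coming from $\dim_k H^0(C,\sE\otimes\sQ)$). I would \emph{define} $\Gamma_G(\sE)_Z$ at a singular point $Z=\{p\}$ precisely as this combination, noting that one must work $G_p$-equivariantly because the decomposition/inertia group at $p$ may permute the branches at $p$ and hence the preimages in $\tC$. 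Verifying that this combination is exactly what is captured by Definition~\ref{defn: ramification module singular} then closes the argument.

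The main obstacle is bookkeeping at singular points whose stabilizer permutes the local branches: one has to turn the contributions from $\tp\in\pi^{-1}(p)$, which are naturally $G_{\tp}$-modules, into a $G_p$-module by induction, and simultaneously cancel the $\frac{1}{|G|}\dim_k H^0(C,\sE\otimes\sQ)\,[k[G]]$ term with the non-projection piece of $\chi_G(\sE\otimes\sQ)$. Getting these combinatorics right under tameness, and checking that the resulting local expression agrees with Definition~\ref{defn: ramification module singular} independently of the choice of points $\tp$ in the $G$-orbit, is the technical heart of the proof; once it is done, the formula follows by simply collecting terms.
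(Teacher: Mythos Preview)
Your approach is essentially identical to the paper's: pass to the normalization via the short exact sequence $0\to\sE\to\nu_*\nu^*\sE\to\sE\otimes\sS\to 0$, apply the smooth-case formula (Theorem~\ref{thm: sm}) to $\nu^*\sE$, substitute $\chi(\nu^*\sE)=\chi(\sE)+\sum_P\dim_k(\sE\otimes\sS)_P$, and regroup using Corollary~\ref{lem: conn decomp} on $\sE\otimes\sS$. The only difference is that you anticipate the final bookkeeping at singular points as the ``technical heart,'' whereas in the paper it is trivial by design: Definition~\ref{defn: ramification module singular} is \emph{defined} to be exactly the leftover combination, so once the regrouping is set up there is nothing further to verify.
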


The key part of the formula \eqref{eq: CW intro} is the definition of
the ramification modules $\Gamma_G(\sE)_Z$, which are elements of $R_k(G)_\QQ$ of virtual degree $0$. When the curve $C$ is connected, smooth, and the $G$-action is faithful,  $\Gamma_G(\sE)_P$ is obtained by twisting the expression of Atiyah--Singer--K\"ock type
\[
-\frac{1}{|G|}\Ind_{G_P}^G\sum_{d=0}^{|G_P|-1}d\theta_P^d\otimes [\sE|_P],
\]
as discovered in \cite{Koc05}, with "the same positive amount of" the regular representation; see Definition~\ref{defn: ramification module sm}. In general, there are additional contributions from the singularities of $C$ and from the components of $C$ with nontrivial inertia group; see Definition~\ref{defn: ramification module singular}.

Comparing $\chi_G(\sE)$ with $r\chi_G(\sO_C)$, we propose a $G$-equivariant degree $\deg_G\sE$ of $\sE$ (Definition~\ref{defn: deg_G E}) and give an   equivariant  Riemann--Roch type formula that generalizes
  \cite[Th\'eor\`eme 4.10]{Bor03}: 
  \begin{thm}[ = Corollary~\ref{cor: Equiv-RR}] Under the assumption of
    Theorem~\ref{thm: CW}, we have the following equality in $R_k(G)_\QQ$:
\[
\chi_G(\sE)=r\chi_G(\sO_C)+\deg_G \sE.
\]
  \end{thm}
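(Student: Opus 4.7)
The plan is to deduce the equivariant Riemann--Roch identity directly from the Chevalley--Weil formula of Theorem~\ref{thm: CW}, by applying it simultaneously to the locally free $G$-sheaf $\sE$ and to the structure sheaf $\sO_C$. Concretely, \eqref{eq: CW intro} gives
\begin{equation*}
\chi_G(\sE) = \tfrac{\chi(\sE)}{|G|}[k[G]] + \sum_{Z\subset C}\Gamma_G(\sE)_Z, \qquad \chi_G(\sO_C) = \tfrac{\chi(\sO_C)}{|G|}[k[G]] + \sum_{Z\subset C}\Gamma_G(\sO_C)_Z,
\end{equation*}
both sums ranging over the same finite set of irreducible subvarieties with nontrivial inertia and singular points of $C$. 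Taking $r$ times the second equation and subtracting from the first yields
\begin{equation*}
\chi_G(\sE) - r\,\chi_G(\sO_C) = \tfrac{\chi(\sE) - r\,\chi(\sO_C)}{|G|}[k[G]] + \sum_{Z\subset C}\bigl(\Gamma_G(\sE)_Z - r\,\Gamma_G(\sO_C)_Z\bigr)
\end{equation*}
in $R_k(G)_\QQ$.

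Next I invoke the ordinary Riemann--Roch formula for a rank-$r$ locally free sheaf on a proper reduced curve to rewrite $\chi(\sE) - r\,\chi(\sO_C)$ as $\deg\sE$, the degree of the determinant line bundle $\det\sE$. It then remains to recognize the resulting right-hand side as $\deg_G\sE$ from Definition~\ref{defn: deg_G E}. The motivating remark in the introduction, that $\deg_G\sE$ is proposed precisely by comparing $\chi_G(\sE)$ with $r\chi_G(\sO_C)$, indicates that this equality is essentially built into the definition: $\deg_G\sE$ should be prescribed as the combination of the regular representation term $\tfrac{\deg\sE}{|G|}[k[G]]$ with the local correction $\sum_{Z\subset C}\bigl(\Gamma_G(\sE)_Z - r\,\Gamma_G(\sO_C)_Z\bigr)$, which is a degree-zero element of $R_k(G)_\QQ$.

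The main (and essentially only) obstacle is to verify this matching: one must check that whatever intrinsic, local formula Definition~\ref{defn: deg_G E} gives for $\deg_G\sE$ indeed reassembles into the above combination of ramification modules and the regular representation. Once that bookkeeping is carried out, rewriting the displayed difference gives $\chi_G(\sE) - r\,\chi_G(\sO_C) = \deg_G\sE$, which is the claimed identity. No further input beyond Theorem~\ref{thm: CW} and the classical Riemann--Roch theorem is needed.
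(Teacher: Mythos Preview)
Your proposal is correct and matches the paper's argument exactly: the paper derives the corollary in one line by combining Theorem~\ref{thm: CW singular}, the classical Riemann--Roch identity $\chi(\sE)=r\chi(\sO_C)+\deg\sE$, and Definition~\ref{defn: deg_G E}. Note that the ``obstacle'' you flag is not really one, since Definition~\ref{defn: deg_G E} literally sets $\deg_G\sE:=\frac{1}{|G|}(\deg\sE)[k[G]]+\sum_{Z\subseteq C}\bigl(\Gamma_G(\sE)_Z-r\,\Gamma_G(\sO_C)_Z\bigr)$, so the matching is tautological.
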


Let us comment on the difficulties one encountered when dealing with
automorphisms of singular curves. One 
complication coming up in the reducible curve
case is that a nontrivial subgroup of $\Aut(C)$ can act trivially on an entire component
of $C$, so  the action of a group $G$ can be faithful but not free on a
dense open subset of $C$. In general, the order $|\Aut(C)|$ for a
stable curve $C$ of genus $g$ can only be bounded by an exponential
function of $g$ (\cite{vOpV07}), in strong contrast to the
Hurwitz bound $84(g-1)$ for connected smooth genus $g$ curves
(\cite{Hur92}). Recall that a stable curve is a
connected nodal curve $C$, with finite automorphism group $\Aut(C)$, or equivalently, with ample canonical sheaf $\omega_C$.

We were unable to adapt directly the method of \cite{EL80} to the singular case, because the crucial
exact sequence (3.1) therein does not hold in the singular case
(and in fact neither in higher-dimensional smooth case). Instead, we use the natural idea of comparing the cohomological groups with their pull-back to
the normalization $\nu\colon\tC\to C$ of $C$. To fix ideas, take a locally free $G$-equivariant sheaf $\sE$ on $C$. Then we have
\[
0\rightarrow  \sE\rightarrow \nu_*\nu^*\sE \rightarrow \sE|_S \rightarrow 0
\]
where $S$ is the singular locus of $C$ endowed with the reduced
  structure. Here we encounter two $G$-equivariant sheaves $\nu^*\sE$
and $\sE|_S$ on usually disconnected varieties $\tC$ and $S$
respectively. The simple and useful observation is that the cohomology
representation of a $G$-equivariant sheaf $\sE$ on a disconnected
variety $X$ can be induced from the cohomology representation of
$G_{\{X_j\}}$ on $H^*\left(X_j, \sE|_{X_j}\right)$, where the $X_j$'s
are the connected components of $X$, and $G_{\{X_j\}}\subseteq G$ is the
stabilizer of $X_j$ (see Lemma~\ref{lem: conn decomp}).

After writing down the Chevalley--Weil formula, we address the following natural questions for the cohomology of a $G$-sheaf $\sF$ on a $G$-curve $C$: 
\begin{enumerate}
    \item Is there a positive rational number $a$ such that $\chi_G(\sF)\geq a[k[G]]$ holds in $R_k(G)_\QQ$? If $G$ is reductive over $k$, then this is equivalent to asking  wheter every irreducible $G$-module has positive virtual multiplicity in $\chi_G(\sF)$.
    \item Is $H^*(C, \sF)$ a faithful representation of $G$?
        \item What is the $G$-invariant part $H^*(C, \sF)^G$?
\end{enumerate}
Of course, the answers depend on $\sF$ and $C$. We focus on the case of nodal $G$-curves, and the $G$-sheaves are taken to be related to the canonical sheaf $\omega_C$. 

We first compute the $G$-invariant global sections of the pluricanonical sheaves on a stable pointed $G$-curve.
\begin{thm}[{=  Corollary~\ref{cor: pluricanonical G-inv}}]
Let $C$ be a connected proper nodal curve over an algebraically closed
field $k$, and $G\subset\Aut(C)$ a  finite subgroup of automorphisms
such that $|G|$ is not divisible by $\chara(k)$. Let $S$ be the singular locus of $C$, and $T$ a (possibly empty) finite $G$-set contained in the smooth locus of $C$ such that $\omega_C(T)$ is ample. Then for any positive integer $m$, we have
\begin{multline*}
\dim_k H^0(C, \omega_C(T)^{\otimes m})^G =\dim_k H^0(D, \omega_D(\oT)^{\otimes m}) +(m - \epsilon_m) \#\oS_2 \\
+\sum_{Q\in D\setminus (\oS\cup\oT)}\left\lfloor m\left(1-\frac{1}{|e_{Q}|}\right)\right\rfloor  
\end{multline*}
where $\pi\colon C\rightarrow D=C/G$ is the quotient map, $\oS,\, \oT\subset D$ are images of $S,\, T$ respectively,  $e_Q:=|G_P|$ for $Q\in D\setminus(\oS\cup \oT)$ and $P\in \pi^{-1}(Q)$, and $\epsilon_m=1$ if $m$ is odd and $0$ if $m$ is even.
\end{thm}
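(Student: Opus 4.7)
The plan is to apply the Chevalley--Weil formula of Theorem~\ref{thm: CW} to the $G$-equivariant line bundle $\sE := \omega_C(T)^{\otimes m}$ and to extract the multiplicity of the trivial representation. Since $|G|$ is invertible in $k$, the invariants functor is exact, so
\[
\dim_k H^0(C, \sE)^G - \dim_k H^1(C, \sE)^G = \langle \chi_G(\sE), \mathbf{1}\rangle_G,
\]
where $\langle -, \mathbf{1}\rangle_G$ denotes the multiplicity of the trivial representation. The first step is a vanishing: using Serre duality on the Gorenstein curve $C$, one has $H^1(C, \sE)^\vee \cong H^0(C, \omega_C(T)^{\otimes (1-m)}\otimes \sO_C(-T))$. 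Ampleness of $\omega_C(T)$ forces this to vanish when $m\geq 2$ and when $m=1$ with $T\neq\emptyset$; the only exceptional case is $m=1$, $T=\emptyset$, where both $H^1(C,\omega_C)$ and $H^1(D,\omega_D)$ equal the trivial one-dimensional $G$-module and therefore cancel consistently throughout the argument below.

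Next I decompose $\chi_G(\sE)$ using Theorem~\ref{thm: CW}:
\[
\chi_G(\sE) = \frac{\chi(\sE)}{|G|}[k[G]] + \sum_Z \Gamma_G(\sE)_Z,
\]
where $Z$ ranges over smooth points with nontrivial inertia (including points of $T$), nodes of $C$, and components of $C$ with nontrivial generic stabilizer. Taking trivial-representation multiplicities, $[k[G]]$ contributes $1$, so the main term becomes $\chi(\sE)/|G|$. I then group this with the contributions from the ramification modules at points of $T$: by comparing Riemann--Roch on $C$ with Riemann--Roch on $D$ along the tame cover $\pi\colon C\to D$ of nodal curves, the grouped quantity should equal $\chi(D, \omega_D(\oT)^{\otimes m})$. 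The ampleness of $\omega_D(\oT)$ on the Gorenstein curve $D$, via the same Serre-duality vanishing (and the analogous $m=1$, $T=\emptyset$ edge case), identifies $\chi(D, \omega_D(\oT)^{\otimes m})$ with $\dim_k H^0(D, \omega_D(\oT)^{\otimes m})$, yielding the first summand of the claimed formula.

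It remains to compute the trivial-rep multiplicity of the residual ramification modules at smooth points outside $\oT$, at nodes, and at fixed components. For a smooth point $P$ mapping to $Q\in D\setminus(\oS\cup\oT)$ with cyclic inertia of order $e_Q$, the local character of $\omega_C^{\otimes m}$ at $P$ is $\theta_P^{-m}$, and Definition~\ref{defn: ramification module sm} produces the standard Chevalley--Weil contribution $\lfloor m(1-1/e_Q)\rfloor$ per orbit $Q$. The harder step is the node and fixed-component analysis via Definition~\ref{defn: ramification module singular}: one must sort orbits of nodes by the action of the inertia on the two branches (trivial on each, rotation, or swap) and on the two associated tangent characters (whose product is trivial for $\omega_C$), and show that only nodes in the subset counted by $\oS_2$ contribute a nonzero trivial-rep multiplicity equal to $m-\epsilon_m$; the parity factor $\epsilon_m$ reflects the sign character induced by the branch-swap involution. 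Contributions from components with nontrivial generic stabilizer must then be shown to vanish or to be already absorbed. The principal obstacle of the proof lies precisely in this local case-analysis at the nodes: carefully unpacking the definition of $\Gamma_G(\sE)_P$ for each node type, and verifying the exact combinatorics that produce $m-\epsilon_m$ rather than some other integer close to $m$.
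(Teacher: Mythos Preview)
Your overall strategy—compute $\langle[1_G],\chi_G(\omega_C(T)^{\otimes m})\rangle$ and then invoke $H^1$-vanishing—matches the paper's. The execution differs: rather than applying Theorem~\ref{thm: CW singular} and unpacking Definition~\ref{defn: ramification module singular} at each node, the paper pulls back along the normalization via $0\to\omega_C(T)^{\otimes m}\to\nu_*\nu^*\omega_C(T)^{\otimes m}\to\sS\otimes\omega_C(T)^{\otimes m}\to 0$, applies the already-established log-smooth formula (Proposition~\ref{prop: log smooth}) to $\omega_{\tC}(\tS+\tT)^{\otimes m}$, and treats the skyscraper term via Lemma~\ref{lem: S_P}. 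The node contribution $(m-\epsilon_m)\#\oS_2$ then drops out as the difference of two transparent pieces: $m\#\oS_2$ from comparing $\chi(\omega_{\tD}(\tilde\pi_*(\tS+\tT))^{\otimes m})$ with $\chi(\omega_D(\oT)^{\otimes m})$, and $\epsilon_m\#\oS_2$ from $\langle[1_{G_P}],\chi_P^{m+1}\rangle$. This bypasses the case analysis you identify as the principal obstacle, and it also absorbs the contributions from components with nontrivial inertia automatically (they are built into Proposition~\ref{prop: log smooth}), rather than leaving them to be ``shown to vanish or absorbed'' as you propose.

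There is, however, a genuine gap. Your assertion that $\omega_D(\oT)$ is ample on $D$, and hence that $\chi(\omega_D(\oT)^{\otimes m})=\dim_k H^0(D,\omega_D(\oT)^{\otimes m})$, is false in general. Take $C$ smooth hyperelliptic of genus $2$, $G$ the hyperelliptic involution, $T=\emptyset$, $m=2$: then $D=\PP^1$, $\omega_D=\sO_{\PP^1}(-2)$, and one computes $\dim H^0(C,\omega_C^{\otimes 2})^G=3$, while the six branch points each contribute $\lfloor 2\cdot\tfrac12\rfloor=1$; the formula with $\dim H^0(D,\omega_D^{\otimes 2})=0$ on the right gives $6$, whereas with $\chi(\omega_D^{\otimes 2})=-3$ it gives the correct value $3$. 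In fact the version of Corollary~\ref{cor: pluricanonical G-inv} actually proved in the body has $\chi(\omega_D(\oT)^{\otimes m})$ on the right-hand side, not $\dim_k H^0$; the form quoted in the introduction appears to be a slip. Your argument is sound up to the Euler-characteristic expression on $D$ and should stop there. (Minor: at a smooth fixed point the fibre character of $\omega_C^{\otimes m}$ is $\theta_P^{m}$, not $\theta_P^{-m}$; see~\eqref{eq:oTP}.)
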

In Section~\ref{sec: Def(C,G)}, we apply the obtained formula for
$\dim H^0(C, \omega_C^{\otimes 2})^G$ to compute the dimension the
$G$-equivariant deformation space $\Def(C, G)$ of a stable
$G$-curve. This is useful in the study of the singularities of the
moduli spaces $\oM_g$ of stable curves.  In forthcoming work, we
will analyze further the condition for the faithfulness of the action
of $G$ on $H^0(C, \omega_C^{\otimes m})$.

If $C$ is nodal and each component of $C$ has trivial inertia group,
we obtain numerical criteria for $H^0(C, \omega_C\otimes\sA)$,
where $\sA$ is an ample locally free sheaf, to contain a rational multiple
of the regular representation  (Lemma~\ref{lem: chi_G omega + A} and
Corollary~\ref{cor: H0 omega tC tensor A vs k[G]}). 

It is well known that the dual graph  is very useful in describing the structure of a nodal curve $C$ (\cite[Chapter X]{ACG11}). We observe that there is an induced action of $G$ on the dual graph $\Gamma:=\Gamma(C)$ of $C$, viewed as a 1-dimensional CW complex. We use $\chi_G(\Gamma):= [H_0(\Gamma, k)] -  [H_1(\Gamma, k)]\in R_k(G)$ to describe the discrepancy between $H^0(C, \omega_C)$ and $H^0(\tC, \omega_{\tC})$, where $\nu\colon \tC\rightarrow C$ is the normalization (Corollary~\ref{cor: H^0(omega_C) vs H^0(omega_tC)}):
\[
[H^0(C, \omega_C)] = [H^0(\tC, \omega_{\tC})] + [1_G] - \chi_G(\Gamma, k)
\]
This can be viewed as the $G$-equivariant version of the usual relation between the arithmetic genus of $C$ and the genus of $\tC$.

We note that Broughton's formula mentioned in the beginning of the introduction may be proved by a straightforward topological argument, and hence can be vastly generalized, as follows.
\begin{thm}[{= Theorem~\ref{thm: CW for CW}}]\label{thm: CW for CW intro}
Let $X$ be a possibly disconnected $n$-dimensional compact complex space, and $G\subset\Aut(X)$ a finite subgroup of automorphisms. Let $Y=X/G$ be the quotient space and $\pi\colon X\rightarrow Y$ the quotient map. For each subgroup $H<G$, denote
\[
X_H:=\{ x \in X \mid \text{$G_ x$ conjugate to $H$} \} \, \text{ and }\, Y_H:=\pi(X_H).
\]
Let $H_1,\dots, H_s$ be a system of representatives of conjugacy classes in $G$.
Then the following equality holds in $R_\CC(G)_\QQ$:
\[    \sum_{0\leq i\leq 2n}(-1)^i[H^i(X,\QQ)] = \sum_{1\leq j\leq s} \chi(Y_{H_j})\Ind_{H_j}^G [1_{H_j}].
\]
where for a complex space $Z$, $\chi(Z)$ denotes its topological Euler characteristic, and we set $\chi(Y_{H_j})=0$ if $Y_{H_j}=\emptyset$.
\end{thm}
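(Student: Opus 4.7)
The plan is to exploit a $G$-equivariant cell decomposition of $X$ that is compatible with the stratification by orbit type, and then to regroup the cellular contributions according to that stratification. First, I would invoke the existence of a finite $G$-equivariant triangulation (or regular CW decomposition) of $X$ with the following properties: $G$ permutes the open cells; whenever $g \in G$ preserves a cell $\sigma$ setwise it fixes $\sigma$ pointwise; and the stabilizer $G_\sigma$ of a cell coincides with $G_x$ for every $x$ in its (open) interior, so that each cell lies inside a single orbit stratum $X_{H_j}$. For a compact complex space with a finite group of automorphisms such a structure is produced by combining a triangulation theorem for complex (or more generally semianalytic) spaces, for example \L ojasiewicz's, with an equivariant subdivision adapted to the finite stratification by the locally closed constructible subsets $X_{H_j}$.

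Next, using the rational cellular chain complex, I would compute $\sum_i (-1)^i [H^i(X,\QQ)]$ as an alternating sum of the terms, as is standard for an Euler characteristic of a complex of $\QQ G$-modules. Since $G$ acts by permuting cells in the manner above, the space of $d$-chains is, as a $\QQ G$-module, the direct sum of the permutation modules $\QQ[G/G_\sigma]$ over $G$-orbits of $d$-cells, and $\QQ[G/G_\sigma]$ represents the class of $\Ind_{G_\sigma}^G [1_{G_\sigma}]$ in $R_\QQ(G)$. This gives, in $R_\QQ(G)$,
\[
\sum_i (-1)^i [H^i(X,\QQ)] \;=\; \sum_{[\sigma]} (-1)^{\dim \sigma}\, \Ind_{G_\sigma}^G [1_{G_\sigma}],
\]
the right-hand sum running over $G$-orbits $[\sigma]$ of cells.

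Finally, I would regroup the orbits by the conjugacy class of their stabilizer. The orbits $[\sigma]$ with $G_\sigma$ conjugate to $H_j$ correspond bijectively with the cells of the induced CW decomposition of $Y$ whose interior lies in $Y_{H_j}$; additivity of the topological Euler characteristic over such a CW stratification gives
\[
\sum_{[\sigma]:\, G_\sigma \sim H_j} (-1)^{\dim \sigma} \;=\; \chi(Y_{H_j}),
\]
with the convention $\chi(\emptyset) = 0$, and substituting this into the previous identity yields the theorem.

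The main obstacle, as I see it, is the first step: producing a $G$-equivariant CW structure on a (possibly singular) compact complex space that is at once regular, respects the orbit-type stratification, and has stabilizers constant on open cells. Once such a structure is in hand the rest is bookkeeping. As a backup one can verify the identity character-by-character via the classical Lefschetz fixed point formula $\mathrm{tr}(g \mid \sum_i (-1)^i [H^i(X,\QQ)]) = \chi(X^g)$ and a counting argument comparing $X^g \cap X_{H_j}$ with cosets $a H_j$ satisfying $a^{-1} g a \in H_j$; however this alternative route still ultimately rests on the additivity of the Euler characteristic over the orbit-type strata, so the cellular approach seems the most economical.
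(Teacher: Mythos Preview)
Your proposal is correct and follows essentially the same route as the paper: both construct a finite $G$-equivariant triangulation whose open simplices have constant isotropy, identify the alternating sum of cohomology classes with the alternating sum of cellular chain modules in $R_\QQ(G)$, and then regroup the resulting permutation modules $\Ind_{G_\sigma}^G[1_{G_\sigma}]$ according to the conjugacy class of the stabilizer to obtain the Euler characteristics $\chi(Y_{H_j})$. The step you flag as the main obstacle---the existence of such a triangulation---is exactly what the paper imports from \cite[Theorem~2.3]{CL18}, so your concern is well placed but already addressed in the literature.
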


Specializing Theorem~\ref{thm: CW for CW intro} to nodal curves, we
obtain

\begin{thm}[{= Corollary~\ref{cor: chi Q irreducible}}]\label{thm: Broughton intro}
  Let $C$
  be a proper nodal curve over $\CC$ and $G\subset\Aut(C)$ a finite subgroup
  of automorphisms acting freely on a dense open subset of $C$.
   Let $\pi\colon C\rightarrow D:=C/G$ be the quotient map and write $D=\cup_{1\leq j\leq m} D_j$ as the union of irreducible components. Let $S$ be the singular locus of $C$, and set
\[
A:=S\cup \{P\in C\mid G_P \text{ is nontrivial}\}
\]
Then
\[
\sum_{i=0}^{2}(-1)^i[H^i(C,\QQ)] = \sum_{j=1}^m \chi(D_j\setminus\pi(A))\left[\QQ[G]\right] + \sum_{P\in A'} \Ind_{G_{P}}^G \left[1_{G_{P}}\right].
\]
where $A'$ denotes a set of representatives of the $G$-orbits in $A$.
\end{thm}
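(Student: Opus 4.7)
The plan is to deduce the corollary by specializing Theorem~\ref{thm: CW for CW intro} to $n=1$ and $X=C$, and then reorganizing the resulting sum so that nodes of $C$ with trivial stabilizer are pulled out of the $\chi(Y_{\{e\}})[\QQ[G]]$ term and collected together with the ramification contributions in the second sum.

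Set $N := \{P\in C \mid G_P\neq\{e\}\}$, so that $A = S\cup N$. Since the $G$-action is free on a dense open subset, $X_{\{e\}} = C\setminus N$, and for every nontrivial subgroup $H\leq G$ the set $X_H$ is finite (contained in a proper closed subvariety of the curve). Hence $\chi(Y_{H_j}) = |Y_{H_j}|$ for $H_j\neq\{e\}$, and additivity of the topological Euler characteristic on the 1-dimensional space $D$ gives $\chi(Y_{\{e\}}) = \chi(D) - |\pi(N)|$. For the sub-sum over nontrivial $H_j$, each $Q\in Y_{H_j}$ is the image of a $G$-orbit in $N$ with stabilizer conjugate to $H_j$, and for a representative $P$ we have $\Ind_{G_P}^G[1_{G_P}]\cong \Ind_{H_j}^G[1_{H_j}]$, so this sub-sum reindexes cleanly as $\sum_{P\in A'\cap N} \Ind_{G_P}^G[1_{G_P}]$.

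The remaining task is to split the $H_j=\{e\}$ contribution. Writing $T := \pi(S)$ and $B := \pi(N)$, additivity of $\chi$ gives
\[
\chi(Y_{\{e\}}) = \chi(D\setminus \pi(A)) + |T\setminus B|,
\]
and $|T\setminus B|$ counts the $G$-orbits of nodes with trivial stabilizer. Using $\Ind_{\{e\}}^G[1_{\{e\}}] = [\QQ[G]]$, multiplying by $[\QQ[G]]$ and combining with the nontrivial sub-sum yields $\sum_{P\in A'} \Ind_{G_P}^G[1_{G_P}]$ alongside $\chi(D\setminus\pi(A))[\QQ[G]]$. Finally, since $\pi(A)\supset\pi(S)$ contains every node of $D$, removing $\pi(A)$ from $D$ separates the irreducible components, so additivity gives $\chi(D\setminus \pi(A)) = \sum_{j=1}^m \chi(D_j\setminus\pi(A))$, matching the first summand of the corollary. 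The main obstacle is the combinatorial bookkeeping around nodes with trivial stabilizer: such nodes lie in $A$ yet contribute a regular representation factor, and the argument turns on identifying $|T\setminus B|$ precisely with the number of such free-orbit nodes so that $\chi(Y_{\{e\}})[\QQ[G]]$ can be cleanly redistributed between the two summands.
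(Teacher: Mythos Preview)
Your proposal is correct and follows exactly the route the paper intends: the paper states Corollary~\ref{cor: chi Q irreducible} as an immediate specialization of Theorem~\ref{thm: CW for CW} (via the intermediate Corollary~\ref{cor: chi Q}) without writing out the derivation, and your argument supplies precisely the omitted bookkeeping---identifying $Y_{\{e\}}=D\setminus\pi(N)$, splitting off the orbits of nodes with trivial stabilizer as copies of $[\QQ[G]]$, and using that all nodes of $D$ lie in $\pi(S)\subset\pi(A)$ to break $\chi(D\setminus\pi(A))$ into a sum over irreducible components. One cosmetic point: the notation ``$A'\cap N$'' is slightly ambiguous since $A'$ is a set of orbit representatives; it would be cleaner to write ``those $P\in A'$ with $G_P\neq\{e\}$'' or simply $N'$, but your intent is clear and the argument goes through.
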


Finally, we note that the Chevalley--Weil formula is important for computing of cohomological invariants of the so-called product-quotient  varieties $(C_1\times \dots \times C_n)/G$, where $C_1,\dots, C_n$ are smooth projective curves and $G$ is a finite group acting diagonally on their product (\cite{CLZ13, Gle16, FGP20}); this class of varieties is very useful for the construction and classification of projective varieties with prescribed invariants. Our Chevalley--Weil type formulas facilitates the computation of cohomological invariants of $(C_1\times \dots \times C_n)/G$ when the $C_i$ are nodal curves. These possibly non-normal varieties are in turn useful in the compactification of moduli spaces of varieties of general type; see \cite{vOp06, Liu12} for the surface case. 

\medskip

\noindent{\bf Notation and Conventions.}
Let us fix an algebraically closed field $k$. 

\begin{itemize}

\item A \emph{variety} in this paper means a reduced scheme of finite type over $k$, and we only consider the closed points. Thus a variety can be reducible or even disconnected. We will denote by $\Aut(X):=\Aut_k(X)$ the automorphism group of a variety $X$ over $k$. For a coherent sheaf $\sF$ on $X$ and a closed subscheme $Z\subset X$, we denote by $\sF|_Z:=\sF\otimes_{\sO_X}\sO_Z$ the restriction of $\sF$ to $Z$.

    \item Let $G$ be a finite group acting on a set $X$. For any subset $Z$ of $X$, we denote
\[
G_{\{Z\}}:=\{g\in G\mid g(Z)\subseteq Z\}
\]
the \emph{stabilizer or decomposition group} of $Z$ and
\[
G_Z:=\{g\in G\mid g(s) =s \text{ for any $s\in Z$}\}
\]
the \emph{inertia group of $Z$}. 
Obviously, we have $G_Z\subseteq G_{\{Z\}}$. For $P\in X$, its orbit $\{g(P)\mid g\in G\}$ is denoted by $G.P$. Conversely, the \emph{inertia locus} or \emph{fixed locus} of $(X, G)$ is the set of $x\in X$ with nontrivial stabilizer $G_x$.
\item Let $X$ be an algebraic variety and $G$ a finite group acting on $X$ by regular automorphisms. Then the inertia locus of $(X,G)$ is a closed subset of
$X$. If $G$ acts freely on a dense open subset of $X$ and if $X$ is 
quasi-projective, the inertia locus  
coincides with the ramification locus of the quotient map
$X\to X/G$. Note that if $G$ acts trivially on an irreducible component
of $X$, then $X\to X/G$ is \'etale on this component, while the
inertia group of this component is $G$, hence nontrivial in general. 
\end{itemize}

\section{Preliminaries}\label{sec: prelim}

For the reader's convenience, we recall or give some general facts which will be used later. In particular, we carefully recall facts about the Grothendieck ring of $k[G]$-modules, the Frobenius reciprocity and Brauer character in the representation theory of finite groups, in order to handle the case where the order of the group is divisible by the characteristic exponent of the base field $k$.

\subsection{Representation theory of finite groups}
\subsubsection{The Grothendieck ring of finitely generated \texorpdfstring{$k[G]$}{}-modules}\label{sec: Grothendieck ring}

Most of the material in this subsection is taken from
\cite{Ser77}. For a finite group $G$ and a field $k$, let $\Irr_k(G)$ be the set of irreducible
$G$-representations over $k$. The one-dimensional trivial
representation of $G$ is denoted by $1_G$ and called the
\emph{unit representation} of $G$.  

In this paper, we only consider \emph{finitely generated} $k[G]$-modules. The set of finite dimensional representations of $G$ are in a natural bijection with the set of finitely generated left $k[G]$-modules, and we use both terminologies interchangeably. Note that $k[G]$, viewed as a module over itself, corresponds to the \emph{regular representation}. 

We recall the fundamental Frobenius reciprocity: For a subgroup $H\subset G$, a $k[H]$-module $V$, and a $k[G]$-module $W$, let $\Ind_H^G V$ and $\Res^G_H W$ be the induction of $V$ to $G$ and the restriction of $W$ to $H$. Then there are canonical isomorphisms (\cite[Section~2.3]{Sch13}, first and second Frobenius reciprocities):
\begin{equation}\label{eq: Frobenius}
\begin{split}
 \Hom_G(\Ind_H^G V, W)\cong \Hom_H(V, \Res^G_H W),  \\
 \Hom_G(W, \Ind_H^G V)\cong \Hom_H(\Res^G_H W, V).
\end{split}
\end{equation}

We may form the Grothendieck group $R_k(G)$ of finitely generated
$k[G]$-modules (also called $G$-modules in the sequel if the base field $k$ is clear). For a
$G$-module $V$, we denote $[V]_{R_k(G)}$ (or simply $[V]$ if $G$ and $k$ are clear from the context)
its class in $R_k(G)$. Then $R_k(G)$ is a free abelian group with basis (\cite[page 115, Proposition~40]{Ser77})
\[
S:=\{[V]\mid V\in \Irr_k(G)\}.
\]
Two $G$-modules $V$ and $W$ has the same image in $R(G)$ if and only if they have the same composition factors (counted with multiplicity). 

We define the following operations on $R_k(G)$:
\begin{itemize}
\item $[V]\otimes [W]:=[V\otimes_k W]\in R_k(G)$, the tensor product
  of two $G$-modules. 
\item $\Res^G_H [V]:=\left[\Res^G_H V\right]\in R_k(H)$, the
restriction of $[V]$ with respect to a group homomorphism
      $\varphi\colon H\rightarrow G$. To simplify notation, we often 
      omit the symbol $\Res^G_H$ when the homomorphism $\varphi$ is clear.
\item $\Ind_G^{\tG}[V]:= \left[\Ind_G^{\tG} V\right]=\left[k[\tG]\otimes_{k[G]} V\right]\in R_k(\tG)$ the
  class of the induced representation if $G\subseteq \tG$ is a
  subgroup. 
\end{itemize}
These operations extend by linearity to the whole Grothendieck group $R_k(G)$.
In particular, $R_k(G)$ becomes a commutative ring with respect to the
tensor product, with unit $[1_G]$. For any commutative ring $A$ with unit, we define $R_k(G)_A := R_k(G)\otimes_\ZZ A$.

For $\alpha\in R_k(G)_\QQ$, we may write uniquely
\[
\alpha=\sum_{V\in \Irr_k(G)} \mu_V(\alpha)[V],\quad \mu_V(\alpha)\in\QQ.
\]
We call $\mu_V(\alpha)$ the \emph{virtual multiplicity} of $V$ in $\alpha$. The \emph{virtual degree} of $\alpha\in R_k(G)_\QQ$ is defined as
\[
\deg\alpha := \sum_{V\in \Irr_k(G)}\mu_V(\alpha)\cdot \dim V. 
\]
Define the subgroup
\[
R_k(G)^{+}_\QQ:=\{\alpha\in R_k(G)_\QQ \mid \mu_V(\alpha) \geq 0
\text{ for all } V\in \Irr_k(G)\}. 
\]
It induces a partial order on $R_k(G)_\QQ$ defined by 
$\alpha\geq \beta$ if $\alpha-\beta \in R_k(G)^{+}_\QQ$.

\subsubsection{Brauer character}\label{sec: character}
Let $G$ be a finite group, $k$ an algebraically closed field, and $\mu_\infty(k)$ the group of roots of unity in $k$. Set
\begin{equation}\label{eq: G_reg}
  G_{\reg}:=
\begin{cases}
    G & \text{if $\chara(k)=0$}\\
    \{g\in G\mid \text{the order of $g$ is prime to $p$}\} &\text{if $\chara(k)=p>0$}
\end{cases}  
\end{equation}
Then $G_{\reg}$ is a union of conjugacy classes. The elements of $G_{\reg}$ are called \emph{$p$-regular} if $\chara(k)=p>0$.

For a $k[G]$-module $V$ 
of degree $d$, let $\lambda_1,\dots,\lambda_d$ be the eigenvalues of any $g\in G_\reg$ on $V$, which are elements of $\mu_\infty(k)$. 
The \emph{Brauer trace} of $g$ on $V$ is
\begin{equation}\label{eq: Brauer trace}
\tr_B(g; V):=\sum_{i=1}^d \varphi(\lambda_i)\in \CC
\end{equation}
where $\varphi \colon \mu_\infty(k)\to \mu_\infty(\CC)$ is a given
injective homorphism.
\footnote{If $\chara(k)=0$, we may take directly an injective homorphism $\varphi\colon \mu_\infty(k)\to \mu_\infty(\CC)$. If $\chara(k)=p>0$, then let $W(k)$ be the Witt ring   of $k$ with field of fractions $K$. The reduction map $W(k)\to k$ induces a canonical isomorphism
$\mu_\infty'(K)\to \mu_\infty(k)$, where $\mu_\infty'$ denotes the group of roots of unity with order prime to $p$. We may then take an injective homomorphism $\varphi\colon \mu_\infty(k) = \mu_\infty'(K)\rightarrow \mu_\infty(\CC)$.}

Then the function $\phi_V:=\tr_B(\cdot; V)$ lies in $\Cl(G_\reg,\CC)$, the space of $\CC$-valued class functions on $G_\reg$. Extending by linearity, we may define the \emph{Brauer character} for any $\alpha\in R_k(G)$:
\[
\phi_\alpha=\tr_B(\cdot;\alpha)\colon G_\reg\rightarrow \CC.
\]
This induces an isomorphism of $\CC$-algebras (\cite[page 149, Theorem 42']{Ser77}):
\[
\Phi\colon R_k(G)_\CC= R_k(G)\otimes_\ZZ\CC \rightarrow \Cl(G_\reg, \CC), \quad \alpha\mapsto \phi_\alpha.
\]
In particular, $\alpha\in R_k(G)$ is uniquely determined by its Brauer character $\phi_\alpha$. 
For a subgroup $H\subset G$, $\alpha\in R_k(H)$, and $g\in G_\reg$, we have (cf.~\cite{Ser77}, Theorem~12 page 30 and Exercise 18.2, page 150]): 
\begin{equation}\label{eq: induced character}
\phi_{\Ind_H^G \alpha}(g) =
\begin{cases}
\displaystyle \frac{1}{|H|}\sum_{\text{\tiny$\begin{matrix}  t\in G\\  t^{-1}gt\in H \end{matrix}$}} \phi_\alpha(t^{-1}gt) & \text{if $\displaystyle g\in \bigcup_{t\in G} tHt^{-1}$},\\
0 & \text{otherwise}.
\end{cases}    
\end{equation}
In particular, taking $H=\{ 1 \}$, we see that for any $g\in G$, 
\begin{equation} \label{eq:trB_reg}
  \tr_B(g; k[G])=
  \begin{cases}
    |G| &  \text{if \ } g=1\\  
    0   & \text{otherwise}. 
  \end{cases}
\end{equation}
One may define a Hermtian form on $\Cl(G_\reg, \CC)$ by
\begin{equation}\label{eq: inner prod}
 \langle \phi_1, \phi_2\rangle := \frac{1}{|G|}\sum_{g\in G_\reg} \phi_1(g^{-1})\phi_2(g).   
\end{equation}
Then, by \cite[page 148, (vii)]{Ser77} or \cite[Proposition~10.2.1]{Web16}, for $k[G]$-modules $P$ and $V$, with $P$ projective, one has
\[
\dim_\CC \Hom_G(P, V) = \langle \phi_P, \phi_{V} \rangle.
\]
Via the isomorphism $\Phi\colon R_k(G)_\CC\rightarrow \Cl(G_\reg, \CC)$, we may define, for $\alpha, \beta \in R_k(G)_\CC$,
\[
\langle \alpha, \beta\rangle:=\langle \phi_\alpha,\phi_\beta\rangle.
\]

\begin{rmk}
If $k=\CC$ and $\varphi\colon \mu_\infty(k)\rightarrow\mu_\infty(\CC)$ is the identity map, then $\tr_B$ is just the usual trace and $\langle \alpha,\beta\rangle$ in \eqref{eq: inner prod} is an inner product of their virtual characters $\phi_\alpha$ and $\phi_\beta$.
\end{rmk}

Let $H\subset G$ be a subgroup. For $\alpha\in R_k(H)$ and $\beta\in R_k(G)$, one has, by the Frobenius reciprocity,
\begin{equation}\label{eq: Frobenius2}
\langle \Ind_H^G\alpha,\beta\rangle = \langle \alpha,\Res^G_H \beta\rangle.
\end{equation}

Suppose that $|G|$ is invertible in $k$. Then every $k[G]$-module is projective, and the classes of the irreducible $k[G]$-modules form an orthonormal basis for $R_k(G)_\QQ$ with respect to the bilinear form $\langle\cdot,\cdot\rangle$ (cf.~\cite[page 121]{Ser77} or \cite[Theorem~10.2.2]{Web16}). In this case, $\mu_V(\alpha)=\langle [V], \alpha\rangle$ for $V\in \Irr_k(G)$ and $\alpha\in R_k(G)_\QQ$. 

This is not true if $p$ divides $|G|$, as the following example shows.
\begin{ex}
Suppose that $\chara(k)=p>0$ and $G=\ZZ/p\ZZ$. Then $R_k(G)=\ZZ\cdot[1_G]$, and $\langle [1_G], [1_G]\rangle = \frac{1}{p}\in \QQ$.
\end{ex}

\medskip

Let us finish this recap of representation theory by giving some elementary results on induction and restriction of representations.

\begin{lem}  \label{lem:ind_res}
  Let $H\to \oH=H/N$ be a surjective group homomorphism.
  \begin{enumerate}
    \item Let $\oK$ be a
  subgroup of $\oH$ and let $K$ be its pre-image in $H$:
  \[
\begin{tikzcd}
K\arrow[r, hook]\arrow[d, two heads] & H\arrow[d, two heads] \\
\oK\arrow[r, hook] & \oH
 \end{tikzcd}
 \]
 Then for any
  $\oK$-module $M$ we have a canonical isomorphism of
  $H$-modules
  \[
\Res^{\oH}_H (\Ind^{\oH}_{\oK} M)\simeq \Ind^H_K (\Res^{\oK}_K M). 
\]
\item We have $\Res^{\oH}_H k[\oH] \simeq \Ind_N^H 1_N$ where
  $1_N$ corresponds to the unit representation of $N$.  
 \item Let $M$ be a $k[H]$-module. 
  Then we  have
\[
  \Ind_N^H\Res^H_N(M) \simeq k[H/N]\otimes_k M. 
\]  
\end{enumerate}
\end{lem}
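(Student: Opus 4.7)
The plan is to handle the three parts in order, reducing (2) to (1), and proving (1) and (3) by exhibiting explicit isomorphisms. All three claims are essentially standard, so the work is mostly bookkeeping; the only point that needs genuine care is checking that the natural coset bijection in (1) is compatible with the module structures, and that the quotient map in (3) is normal enough to make the relevant tensor product well-defined.

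For part (1), I would start from the coset description of induction. Since $N\subseteq K$ (because $K$ is the preimage of $\bar K$), the quotient map $H\to\bar H$ induces a bijection of left coset spaces $H/K\xrightarrow{\sim}\bar H/\bar K$. Choose a common system of coset representatives $g_1,\dots,g_r\in H$ projecting to representatives $\bar g_1,\dots,\bar g_r\in\bar H$ of $\bar H/\bar K$. Then
\[
\Ind_{\bar K}^{\bar H} M=\bigoplus_{i=1}^r \bar g_i\otimes M,\qquad
\Ind_K^H(\Res_K^{\bar K} M)=\bigoplus_{i=1}^r g_i\otimes M,
\]
and the obvious $k$-linear map $g_i\otimes m\mapsto \bar g_i\otimes m$ is a bijection. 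To see that it is $H$-equivariant, write $hg_i=g_{\sigma(i)}k$ with $k\in K$; then on the right-hand side $h\cdot(g_i\otimes m)=g_{\sigma(i)}\otimes (k\cdot m)$, while on the left-hand side $hg_i=\bar g_{\sigma(i)}\bar k$ and $h\cdot(\bar g_i\otimes m)=\bar g_{\sigma(i)}\otimes(\bar k\cdot m)$. Since $\Res_K^{\bar K} M$ is, by definition, $M$ with $k$ acting through $\bar k$, these agree.

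Part (2) is the special case of (1) obtained by taking $\bar K=\{1\}\subseteq\bar H$, so that $K=N\subseteq H$, and $M=1_{\bar K}$. Indeed, $\Ind_{\{1\}}^{\bar H}1_{\bar K}=k[\bar H]$, while $\Res^{\bar K}_K 1_{\bar K}=1_N$, and part (1) then gives $\Res_H^{\bar H}k[\bar H]\simeq\Ind_N^H 1_N$.

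For part (3), I would define an explicit $H$-equivariant map
\[
\varphi\colon \Ind_N^H\Res_N^H(M)=k[H]\otimes_{k[N]}M\;\longrightarrow\; k[H/N]\otimes_k M,\qquad h\otimes m\mapsto \bar h\otimes h\cdot m.
\]
Well-definedness over $k[N]$ is where normality is used: for $n\in N$ one checks $\overline{hn}\otimes(hn)m=\bar h\otimes hnm=\varphi(h\otimes nm)$. $H$-equivariance is immediate from the formula, and an inverse is given by $\bar h\otimes m\mapsto h\otimes h^{-1}m$ for any lift $h$ of $\bar h$ (independence of the lift again uses that $M$ is an $H$-module and we are tensoring over $k[N]$). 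Since both sides have $k$-dimension $[H:N]\cdot\dim_k M$, this is an isomorphism.

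The only potential obstacle is purely notational: keeping straight the distinction between the $K$-action on $\Res_K^{\bar K}M$ (which factors through $\bar K$) and the ambient $H$-action, so that the claimed isomorphism in (1) really is $H$-linear and not merely $k$-linear. Once representatives are chosen consistently, this is transparent.
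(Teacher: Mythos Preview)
Your proof is correct. For part~(1) your argument is essentially the same as the paper's: the paper rewrites both sides as tensor products and reduces to showing that the canonical map $k[H]\otimes_{k[K]}k[\oK]\to k[\oH]$ is an isomorphism, which it checks ``with a basis of $k[H]$ over $k[K]$ and by using the canonical isomorphism $H/K\to\oH/\oK$''---this is exactly your coset-representative computation, just packaged slightly more abstractly. For parts~(2) and~(3) the paper simply cites \cite[\S 3.3, Example~1, (2) and (5)]{Ser77}, whereas you supply explicit arguments; your reduction of (2) to the special case $\oK=\{1\}$ of (1) is a pleasant observation not made in the paper, and your explicit map in (3) is the standard one underlying Serre's statement.
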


\begin{proof}  (1) By definition, 
  \[
    \Ind^H_K \Res^{\oK}_K M = k[H]\otimes_{k[K]}
    (k[\oK]\otimes_{k[\oK]} M)=(k[H]\otimes_{k[K]} k[\oK])\otimes_{k[\oK]} M
    \]
  and
  \[
\Res^{\oH}_H \Ind^{\oH}_{\oK} M = k[\oH]\otimes_{k[\oK]} M. 
\]
So it is enough to show that the canonical homomorphism
\[ k[H]\otimes_{k[K]} k[\oK]\to k[\oH] \]
is an isomorphism. This can be checked with a basis of $k[H]$ over
$k[K]$ and by using the canonical isomorphism $H/K\to \oH/\oK$.

(2)-(3) See \cite[\S 3.3, Example 1, (2) and (5)]{Ser77}.
\end{proof}

\begin{lem}\label{lem:MkG} Let $G$ be any finite group. 
  Let $M$ be a $k[G]$-module with $\dim_k M=r$. Then
  \[ M\otimes_k k[G] \simeq k[G]^{\oplus r}. \] 
\end{lem}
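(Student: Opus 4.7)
The plan is to exhibit an explicit $k$-linear automorphism of the underlying vector space $M\otimes_k k[G]$ which intertwines two different $G$-actions, and thereby trivializes the action of $G$ on the first tensor factor.

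First I would define the map
\[
\phi\colon M\otimes_k k[G]\longrightarrow M\otimes_k k[G], \qquad \phi(m\otimes g) = g^{-1}m\otimes g,
\]
extended $k$-linearly. Its inverse is $m\otimes g \mapsto gm\otimes g$, so $\phi$ is a $k$-vector-space isomorphism.

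Next, I would consider two $G$-module structures on the vector space $M\otimes_k k[G]$: the diagonal action $h\cdot_{\mathrm{d}}(m\otimes g) = hm\otimes hg$ (which is the intended structure in the statement), and the ``right factor only'' action $h\cdot_{\mathrm{r}}(m\otimes g) = m\otimes hg$. A short calculation
\[
\phi\bigl(h\cdot_{\mathrm{d}}(m\otimes g)\bigr)=\phi(hm\otimes hg)=(hg)^{-1}(hm)\otimes hg = g^{-1}m\otimes hg = h\cdot_{\mathrm{r}}\phi(m\otimes g)
\]
shows that $\phi$ is an isomorphism of $G$-modules between $(M\otimes_k k[G],\cdot_{\mathrm{d}})$ and $(M\otimes_k k[G],\cdot_{\mathrm{r}})$.

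Finally, under the action $\cdot_{\mathrm{r}}$, the $G$-module structure on $M\otimes_k k[G]$ only depends on the $G$-module structure on the second factor. Choosing any $k$-basis $e_1,\dots,e_r$ of $M$ yields a direct sum decomposition
\[
(M\otimes_k k[G],\cdot_{\mathrm{r}}) \;=\; \bigoplus_{i=1}^{r} \bigl(k e_i\otimes_k k[G]\bigr)\;\simeq\; k[G]^{\oplus r}
\]
as $G$-modules, which combined with the isomorphism $\phi$ gives the claim. There is no real obstacle here; the only point to be careful about is keeping straight which $G$-action is in play on each side of $\phi$, in particular that the factor $g^{-1}$ in the definition of $\phi$ is forced by the need to absorb the left factor of the diagonal action.
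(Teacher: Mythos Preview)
Your proof is correct and essentially identical to the paper's own argument: the paper defines the same map $\phi(x\otimes g)=(g^{-1}x)\otimes g$ and observes that it gives a $G$-equivariant isomorphism $M\otimes_k k[G]\to M^0\otimes_k k[G]\simeq k[G]^{\oplus r}$, where $M^0$ is $M$ with the trivial $G$-action (your ``right factor only'' structure). You have simply spelled out the verification in more detail.
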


\begin{proof} 
Let $M^0$ be the $k$-vector space $M$ endowed with the trivial action of $G$. Define
  \[ \phi : M\otimes_k k[G] \to M^0 \otimes_k k[G] \simeq k[G]^{\oplus
      r}, \quad 
    x\otimes g \mapsto (g^{-1}x) \otimes g.
  \] 
 Then $\phi$ is a $G$-equivariant isomorphism.  
\end{proof}

\subsection{The \texorpdfstring{$G$}{}-equivariant Euler characteristic and reduction to normal connected varieties}
Let $X$ be a (possibly reducible) proper variety over $k$. Let $G$ be 
a finite group acting on $X$, and $\sF$ a $G$-equivariant sheaf of
$k$-vector spaces (see \cite[Section 39.12]{Stk25} for the
definition). Following \cite{EL80}, we define the
\emph{$G$-equivariant Euler characteristic} of $\sF$ to be the following element in the
Grothendieck group $R_k(G)$:
\begin{equation}\label{eq:chiGF} 
  \chi_G(\sF): = \sum_{q\geq 0} {(-1)^q} [H^q(X, \sF)].
\end{equation}
\begin{rmk}
$\chi_G(\sF)$ is additive in $\sF$, and $\chi_{G}$ defines a group homomorphism from the Grothendieck group $K_G(X)$ of $G$-equivariant sheaves of $k$-vector spaces  to $R_k(G)$.      
\end{rmk}
\begin{rmk}
    If $H$ is a subgroup of $G$, then any $G$-sheaf $\sF$ is also an $H$-sheaf. We have then $\Res^G_H \chi_G(\sF)=\chi_H(\sF)$. In particular, if $H=\{1\}$ is the trivial subgroup, then $\chi_{H}(\sF)$ is just the usual Euler characteristic, which is an integer.
\end{rmk}

\begin{rmk}
If the kernel $H$ of the action homomorphism $G\rightarrow \Aut(X)$ acts trivially on the cohomology groups $H^*(X,\sF)=\oplus_i H^i(X, \sF)$, which is the case for sheaves such as $\sO_X$ and $\CC_X$ that are intrinsic to $X$, then 
we may take the image $\oG$ of $G\rightarrow \Aut(X)$, and it is clear that
\begin{equation}\label{eq: G restr to oG}
\chi_{G}(\sF) = \Res^{\oG}_{G} \left(\chi_{\oG}(\sF)\right).
\end{equation}
Thus we may assume that $G$ is a finite subgroup of $\Aut(X)$ and the action of $G$ on $X$ is faithful. This we will do in Sections~\ref{sec: nodal} and \ref{sec: CW}.

In general, $H$ may acts nontrivially on $H^*(X,\sF)$ and it is more natural to include non-faithful actions of a group $G$ on $X$. Thus we do not assume the faithfulness of the action in Sections~\ref{sec: prelim} and \ref{sec: CW singular}, unless otherwise specified. Note that statements of previous works, including \cite{EL80} and \cite{Koc05}, assume the action to be faithful.
\end{rmk}


We want to relate $\chi_G(\sF)$ to the
Euler characteristics of the pull-backs of $\sF$ to the connected
components of the normalization of $X$. 
\medskip

\subsubsection{Connected components.} We now explain how to express $\chi_G(\sF)$ in terms of the Euler
characteristics of the restrictions of $\sF$ to the connected
components. 
Let $X$ be a projective variety over $k$ endowed with the action of
a finite group $G$. 
Let $X_1, \dots, X_n$ be the connected components of $X$.
Disconnected varieties can occur as the normalization of connected
reducible varieties, for example. 
Let $\sF$ be a $G$-equivariant coherent sheaf on $X$. 
We want to describe the $k[G]$-modules
$H^* (X, \sF)$ in terms of the $H^*(X_i,
\sF|_{X_i})$'s. 

This fits into the following general situation. Let $M$ be a $k[G]$-module. Suppose that
  $M=\bigoplus_{ 1\leq i\leq n} M_i$ as $k$-vector spaces and that 
  for all $ i\leq n$ and for all $g\in G$, there exists $j
  =g*i$ such that
  $gM_i=M_j$. Thus $G$ acts on $[[1,n]]:=\{ 1,..., n\}$.  

\begin{lem}\label{lem:ind_connect}  Let $G_i=\{ g \in G \ | \ g(M_i)=M_i\}$ for
  any $i\leq n$. Then  
\[ [M]=\sum_{i\leq n} \frac{|G_i|}{|G|} \Ind^G_{G_i}[M_i] \in R_k(G)_\QQ. \] 
\end{lem}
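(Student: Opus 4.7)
The plan is to decompose the index set $[[1,n]]$ into $G$-orbits and recognize each orbit-sum of the $M_i$ as an induced representation, then verify that the stated sum over all indices collapses to this orbit decomposition via the stabilizer weights.

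First I would decompose $[[1,n]] = O_1 \sqcup \cdots \sqcup O_r$ into $G$-orbits under the action $g,i \mapsto g*i$, and for each orbit $O_\ell$ pick a representative $i_\ell \in O_\ell$. Setting $H_\ell := G_{i_\ell}$, I would then observe that $M^{(\ell)} := \bigoplus_{i \in O_\ell} M_i$ is a $k[G]$-submodule of $M$, and that $M = \bigoplus_\ell M^{(\ell)}$ as $k[G]$-modules. Hence in $R_k(G)$ we have $[M] = \sum_\ell [M^{(\ell)}]$.

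Next I would identify $M^{(\ell)}$ with the induced module $\Ind_{H_\ell}^G M_{i_\ell}$. Choose left coset representatives $g_{\ell,1}=e, g_{\ell,2},\ldots, g_{\ell,m_\ell}$ of $H_\ell$ in $G$, where $m_\ell = [G:H_\ell] = |O_\ell|$. The elements $g_{\ell,t}*i_\ell$ are precisely the members of $O_\ell$, each occurring once, and $M_{g_{\ell,t}*i_\ell} = g_{\ell,t} M_{i_\ell}$. The map
\[
k[G]\otimes_{k[H_\ell]} M_{i_\ell} \longrightarrow M^{(\ell)}, \qquad g_{\ell,t}\otimes v \mapsto g_{\ell,t} v,
\]
is then a well-defined $k[G]$-isomorphism. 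Therefore $[M^{(\ell)}] = \Ind_{H_\ell}^G [M_{i_\ell}]$ in $R_k(G)$, giving
\[
[M] = \sum_{\ell=1}^r \Ind_{H_\ell}^G [M_{i_\ell}].
\]

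Finally I would match this with the claimed expression. For any $j = g*i_\ell \in O_\ell$, the stabilizer satisfies $G_j = g H_\ell g^{-1}$, and $M_j = g M_{i_\ell}$; hence $\Ind_{G_j}^G M_j \simeq \Ind_{H_\ell}^G M_{i_\ell}$ as $k[G]$-modules (the isomorphism sends $h\otimes (gv) \mapsto hg\otimes v$). In particular $|G_j|=|H_\ell|$ for all $j \in O_\ell$, so
\[
\sum_{i \in O_\ell} \frac{|G_i|}{|G|} \Ind_{G_i}^G [M_i] = |O_\ell| \cdot \frac{|H_\ell|}{|G|} \Ind_{H_\ell}^G [M_{i_\ell}] = \Ind_{H_\ell}^G [M_{i_\ell}],
\]
using $|O_\ell|\cdot|H_\ell|=|G|$. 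Summing over $\ell$ yields $\sum_{i\leq n} \frac{|G_i|}{|G|}\Ind_{G_i}^G [M_i] = [M]$, as desired.

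There is no serious obstacle here; the only point requiring care is the canonical identification of $M^{(\ell)}$ with the induced module, which relies on the fact that the chosen coset representatives $g_{\ell,t}$ send $M_{i_\ell}$ to distinct summands $M_{g_{\ell,t}*i_\ell}$ of $M$, so that the map above is well-defined and bijective on $k$-vector spaces.
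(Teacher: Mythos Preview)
Your proof is correct and follows essentially the same approach as the paper's: decompose $[[1,n]]$ into $G$-orbits, identify each orbit-sum $\bigoplus_{j\in O_\ell} M_j$ with an induced module via $g\otimes v\mapsto gv$, and then use the orbit--stabilizer relation $|O_\ell|\cdot|G_{i_\ell}|=|G|$ to collapse the weighted sum. The only cosmetic difference is that the paper sums the isomorphisms $\Ind_{G_i}^G M_i \simeq \bigoplus_{j\in O_\ell} M_j$ over all $i$ in a given orbit and divides by $|O_\ell|$, whereas you pick a single representative and separately observe that the induced modules from indices in the same orbit are mutually isomorphic; both routes amount to the same computation.
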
 

\begin{proof} Denote by $s\colon [[1, n]] \to [[1,n]]/G$ 
  the quotient map. For all 
  $\alpha\in [[1,n]]/G$ and all  $i\in s^{-1}(\alpha)$,
  we have a canonical isomorphism of $k[G]$-modules
  \[
    \Ind_{G_i}^G M_i =k[G]\otimes_{k[G_i]} M_i\to
    \bigoplus_{j\in  s^{-1}(\alpha)} M_j, \quad g\otimes x_i \mapsto gx_i \in
    M_{g*i}.     
  \]
  Taking direct sum, we get an isomorphism
  \[
    \bigoplus_{i\in s^{-1}(\alpha)} \Ind_{G_i}^G M_i \to
    \left(\bigoplus_{j\in  s^{-1}(\alpha)} M_j\right)^{|s^{-1}(\alpha)|}. 
  \]
  As $|s^{-1}(\alpha)|=|G|/|G_i|$, we have 
\[  \sum_{i\in s^{-1}(\alpha)} \frac{|G_i|}{|G|} \Ind_{G_i}^G [M_i] 
  = \left[\bigoplus_{j\in  s^{-1}(\alpha)} M_j\right] \in R_k(G)_\QQ. \] 
  Summing over all the $\alpha\in [[1, n]]/G$ gives the desired equality. 
\end{proof}

\begin{cor}\label{lem: conn decomp}
  Keep the above notation with $G_i=\{ g\in G \ |  \ g(X_i)=X_i \}$. 
  Then we have 
\[
  [H^q(X, \sF)] = \sum_{1\leq i\leq n} \frac{|G_i|}{|G|} 
  \Ind_{G_i}^G [H^q(X_i, \sF|_{X_i})] \in R_k(G)_\QQ   
\]
for all $q\geq 0$, and
\[
  \chi_G(\sF) = \sum_{1\leq i\leq n} \frac{|G_i|}{|G|} 
  \Ind_{G_i}^G \chi_{G_i}(\sF|_{X_i}).
\]
\end{cor}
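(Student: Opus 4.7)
The plan is to apply Lemma~\ref{lem:ind_connect} directly to $M = H^q(X, \sF)$, taking advantage of the fact that disjoint unions split sheaf cohomology.

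First, because $X_1,\dots,X_n$ are the connected components of $X$ (in particular, each is open and closed in $X$), the restriction maps give a canonical $k$-vector space decomposition
\[
H^q(X, \sF) = \bigoplus_{1\leq i\leq n} H^q(X_i, \sF|_{X_i}),
\]
so I set $M_i := H^q(X_i, \sF|_{X_i})$. The next step is to verify that this decomposition satisfies the compatibility hypothesis of Lemma~\ref{lem:ind_connect}. For any $g\in G$, the automorphism $g\colon X\to X$ permutes the connected components, inducing a map on indices $i \mapsto g*i$ with $g(X_i) = X_{g*i}$. Because $\sF$ is $G$-equivariant, the isomorphism $g$ together with its equivariant structure restricts to an isomorphism $\sF|_{X_i} \xrightarrow{\sim} g^*(\sF|_{X_{g*i}})$, which on cohomology sends $M_i$ isomorphically onto $M_{g*i}$. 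Thus $gM_i = M_{g*i}$, and the stabilizer in $G$ of the subspace $M_i$ is precisely $G_i = \{g\in G \mid g(X_i)=X_i\}$, as stated.

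With the hypotheses of Lemma~\ref{lem:ind_connect} verified, applying it to $M = H^q(X, \sF)$ immediately yields
\[
[H^q(X, \sF)] = \sum_{1\leq i\leq n} \frac{|G_i|}{|G|} \Ind_{G_i}^G [H^q(X_i, \sF|_{X_i})] \quad \text{in } R_k(G)_\QQ.
\]
The formula for $\chi_G(\sF)$ is then obtained by taking the alternating sum $\sum_{q\geq 0}(-1)^q$ on both sides and using that $\Ind_{G_i}^G$ is additive and commutes with finite alternating sums, so that $\sum_q (-1)^q \Ind_{G_i}^G[H^q(X_i, \sF|_{X_i})] = \Ind_{G_i}^G \chi_{G_i}(\sF|_{X_i})$.

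There is essentially no hard step here; the proof is a direct combination of a standard splitting of cohomology over connected components with the already-established Lemma~\ref{lem:ind_connect}. The only point requiring a moment of care is the verification that the $G$-action on $H^q(X,\sF)$ really does permute the summands $M_i$ in accordance with the permutation of components — this is where the $G$-equivariance of $\sF$ enters and should be stated explicitly.
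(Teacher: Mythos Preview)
Your proof is correct and is exactly the argument the paper intends: the corollary is stated without proof immediately after Lemma~\ref{lem:ind_connect}, and your write-up simply fills in the evident details of applying that lemma to $M=H^q(X,\sF)$ with the direct-sum decomposition over connected components. The one point worth tightening is your claim that the stabilizer of the subspace $M_i$ is \emph{precisely} $G_i$; strictly speaking, if some $M_i=0$ the subspace stabilizer could be larger, but what matters (and what Lemma~\ref{lem:ind_connect} actually uses) is the $G$-action on the index set $\{1,\dots,n\}$ coming from the permutation of components, for which $G_i$ is indeed the stabilizer of $i$.
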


\subsubsection{Normalization} Finally, if $\nu : \tX \to X$ is the normalization morphism, we can
relate $\chi_G(\sE)$ to $\chi_G(\nu^*\sE)$, where $\sE$ is any
$G$-equivariant coherent locally free sheaf on $X$, as follows. Consider the canonical exact sequence of $\sO_X$-modules 
\begin{equation}\label{eq: sesnX}
0\rightarrow \sO_X\rightarrow \nu_*\sO_{\tX}\rightarrow \sS\rightarrow 0    
\end{equation}
where $\Supp \sS$ is exactly the non-normal locus of $X$. Note that 
the action of $G$ on $X$ lifts to $\tX$, so that 
the exact sequence \eqref{eq: sesnX} is 
$G$-equivariant. 
Tensoring \eqref{eq: sesnX} with $\sE$, we obtain a $G$-equivariant exact sequence
\begin{equation}\label{eq:exactsnE}
  0\rightarrow 
  \sE\rightarrow \sE\otimes \nu_*\sO_{\tX}=\nu_*(\nu^*\sE)\rightarrow \sE\otimes \sS\rightarrow 0     
\end{equation}
By additivity of $\chi_{G}$, we obtain from \eqref{eq:exactsnE}
\begin{equation}\label{eq: chi_G F locally free}
\chi_{G}(\sE) = \chi_{G}(\nu^*\sE) - \chi_{G}(\sE\otimes \sS).    
\end{equation}
In other words, we can compute $\chi_{G}(\sE)$ in terms of
$\chi_G(\nu^*\sE)$ and $\chi_{G}(\sE\otimes \sS)$, which live on the
normalization and the non-normal locus of $X$ respectively. We will
specialize further to the case when $X$ is a nodal curve, so that $\chi_G(\nu^*\sE)$ and $\chi_{G}(\sE\otimes \sS)$ are either known or possible to write down explicitly.

Finally, we define the tameness of a $G$-action, which is automatically satisfied over a base field $k$ of characteristic 0, and which becomes a necessary requirement if one uses the Lefschetz fixed point formula over a positive characteristic field (\cite{Don69}).

\begin{defn}[Tameness] \label{defn:tame_action} 
Let $C$ be a proper reduced curve over $k$, and $G$ a finite group acting on $C$.  We say that the action of
$G$ on $C$ is \emph{tame}, or that $C$ is a \emph{tame $G$-curve}, if the characteristic exponent of $k$ does not divide the order of $G_Z$ for any irreducible subvariety $Z\subset C$. If $G$ is a subgroup of $\Aut(C)$, acting tamely on $C$, then we will say that $G$ is a \emph{tame subgroup of $\Aut(C)$.}
\end{defn} 

\section{The Chevalley--Weil formula for proper reduced curves}\label{CW_n_connected}  

\begin{setup}\label{setup: curve}
 Throughout  this section, we adopt the following notation.
 \begin{itemize}
     \item $C$ is a proper reduced curve over $k$, possibly disconnected;
     \item $C=\bigcup_{1\leq i\leq n}C_i$ is the irreducible decomposition of $C$;
     \item $\nu : \tC=\bigcup_{1\leq i\leq n}\tC_i \to C$ is the normalization map; 
     \item $G$ is a finite group  acting tamely on $C$ (Definition~\ref{defn:tame_action}); 
     \item $\pi \colon C\to D:=C/G$ is the quotient morphism. 
     \item For each $1\leq i\leq n$ (see the end of \S 1), 
\begin{equation}\label{eq: G_j}
  G_i:=G_{\{\tC_i\}}=G_{\{ C_i\}},
  \quad I_i:= G_{\tC_i}=G_{C_i}, \quad \oG_i :=
    G_i/I_i. 
\end{equation}
Note that $I_i$ is a normal subgroup of $G_i$, so $\oG_i$ is a group.
\item For any smooth point $P\in C$, denote by
  $G_P^\circ$ the group $I_i$ for the unique irreducible component
    $C_i$ of $C$   passing through $P$
and by $\oG_P=G_P/G_P^\circ$ the quotient group.
\item $\sE$ is a locally free $G$-sheaf of rank $r$ on $C$.
 \end{itemize}
\end{setup} 
If the action of $G$ on $C$ is free, then
\cite[Theorem~2.4]{EL80}   already gives
\begin{equation}\label{eq:free} 
\chi_G(\sE) = \frac{1}{|G|}\chi(\sE)[k[G]] \in R_k(G)_\QQ. 
\end{equation}
In this section, we will give a formula
expressing $\chi_G(\sE)$ in
terms of a ``regular part'' which looks like
$\frac{1}{|G|}\chi(\sE)[k[G]]$,  plus a ``ramification part'' $\sum_{Z\subseteq C}\Gamma_G(\sE)_Z$
coming from the inertia loci $Z$ of the $G$-action; see Theorem~\ref{thm: CW singular}.

\subsection{The Chevalley--Weil formula for smooth curves}

Let notation be as in Set-up~\ref{setup: curve}, so we have a locally free $G$-sheaf $\sE$ of rank $r$ on a tame $G$-curve $C$. In this subsection, we assume that
the curve $C$ is smooth.

For a point $P\in C$, the Zariski cotangent space $T_P^*C$ of $C$ at $P$ is a 1-dimensional $\oG_P$-representation. We denote by $\theta_P$ the class of $T_P^*C$ in $R_k(G_P)$ (it is denoted by $\chi_P$ in \cite{Koc05}). It is helpful to notice that we have canonically
\begin{equation} \label{eq: theta_P}
 \theta_P=[\omega_{C,P}|_{P}]=[\sO_C(-P)|_P].
\end{equation}  
\if false THIS IS DONE IN \S 2 
  Following Nakajima \cite[\S 3]{Nak86a} we
can define the Brauer character of $\theta_P$ (\cite[\S 18.1]{Ser77}) as 
\[
\oG_P\to \mu'_{\infty}(k) \xrightarrow{\varphi}
\mu_{\infty}(\CC), \quad g\mapsto \varphi((g(t)t^{-1})(P)),  
\]
where $t$ is any uniformizing element of $\sO_{C,P}$ and $\varphi$ is a fixed injective homomorphism; it does not depend on the choice of $t$. 
\fi 
  
\begin{defn}[Ramification modules on a smooth $G$-curve]\label{defn: ramification module sm}
  Let $Z$ be an irreducible closed subvariety of $C$, so it is either a point or a connected component of $C$. Define an element $\Gamma_{G_Z}(\sE|_Z)\in R_k(G_Z)_\QQ$
as follows:
\begin{enumerate}[\rm (1)]
\item If $Z=C_i$ is an irreducible component of $C$, 
\begin{equation*}\label{eq:Gamma_i}
  \Gamma_{I_i}(\sE|_{C_i}) = \displaystyle \chi_{I_i}(\sE|_{C_i}) -
  \frac{\chi(\sE|_{C_i})}{|I_i|}[k[I_i]]. 
\end{equation*}
It should be noticed that $I_i$ acts trivially on $C_i$, but its action on
$\sE|_{C_i}$ may not be trivial. 
\item If $Z=P$ is a point of $C$, 
\begin{equation*}\label{eq: Gamma_P}
\Gamma_{G_P}(\sE|_P) = 
  \frac{1}{|\oG_P|}\left(\sum_{d=0}^{|\oG_P|-1}d\theta_P^d\right)\otimes[\sE|_P]
  \otimes\left(\frac{1}{|\oG_P|}\Ind_{G_P^\circ}^{G_P}[1_{G_P^\circ}]-[1_{G_P}]\right).
\end{equation*}
\end{enumerate}
We define the \emph{ramification module} of $\sE$ at $Z$ in both cases
to be 
\begin{equation}\label{eq: Gamma_G induced from Z}
\Gamma_G(\sE)_Z:=\frac{|G_{ Z}|}{|G|}\Ind_{G_Z}^G \Gamma_{G_Z}(\sE|_Z).
\end{equation}
This is an element of virtual degree $0$ in $R_k(G)_\QQ$.
\end{defn}
  
The ramification modules are so defined as to reflect the local contributions of fixed locus of the $G$-action given by the Atiyah--Singer's holomorphic Lefschetz fixed point formula (\cite{AS68, Don69}).

\begin{lem}\label{lem:gp_GG} The following properties hold true. 
\begin{enumerate}[\rm (1)] 
\item We have $\Gamma_G(\sE)_{C_i}=0$ if $I_i$ is trivial.
  For $P\in C$, we have $\Gamma_G(\sE)_{P}=0$ if
$\overline{G}_{P}$ is trivial. 
  In particular, $\Gamma_G(\sE)_Z\neq 0$ only for finitely many $Z$'s.  
    \item For any $g\in G$, we have $\Gamma_G(\sE)_Z = \Gamma_G(\sE)_{g(Z)}$.
\end{enumerate}   
\end{lem}

\begin{proof} (1) follows from the definition of $\Gamma_G(\sE)_{Z}$.
\medskip

(2) Let $\rho_g\colon G\rightarrow G, \, t\mapsto g^{-1}tg$ be the inner automorphism of $G$ induced by $g$. Then $\rho_g(G_{g(Z)})=G_{Z}$, and we use the same $\rho_g$ to denote the induced isomorphism $G_{g(Z)}\rightarrow G_Z$. Since $\sE$ is $G$-equivariant, we have $g^*(\sE|_{g(Z)}) = \sE|_{Z}$, and 
\[
\rho_g^* \Gamma_{G_Z}(\sE|_Z) = \Gamma_{G_{g(Z)}}(\sE|_{g(Z)}) 
\]
It follows that
\begin{multline*}
  \Gamma_G(\sE)_Z = \frac{|G_Z|}{|G|}\Ind_{G_Z}^G \Gamma_{G_Z}(\sE|_Z) =
  \frac{G_Z}{|G|} k[G]\otimes_{k[G_Z]}\Gamma_{G_Z}(\sE|_Z) \\
  =\frac{|G_Z|}{|G|} \rho^*k[G]\otimes_{\rho^*k[G_Z]}\rho^*\Gamma_{G_Z}(\sE|_Z)
  =\frac{|G_Z|}{|G|} k[G]\otimes_{k[G_{g(Z)}]}\Gamma_{G_{g(Z)}}(\sE|_{g(Z)}) \\
  = \Gamma_G(\sE)_{g(Z)}
\end{multline*}
\end{proof}

\begin{lem}\label{lem: tr G_Po vanish} Let $g_0\in G_{\reg}\setminus \{ 1 \}$ 
  (see \eqref{eq: G_reg} for the definition). Denote by $C^{g_0}$ the
  fix locus of $g_0$ acting on $C$. 
\begin{enumerate}[\rm (1)] 
\item Let $Z$ be a closed irreducible subvariety of $C$. Then 
\[
\tr_B(g_0; \Gamma_G(\sE)_{Z}) = \frac{|G_{\{Z\}}|}{|G|}
      \sum_{Z'} \tr_B(g_0;\Gamma_{G_{Z'}}(\sE|_{Z'}))
    \]
    where $Z'$ runs through the irreducible components of the orbit $G\cdot Z$ contained in $C^{g_0}$. In particular, 
$\tr_B(g_0;\Gamma_G(\sE)_Z)=0$ if $Z$ is not contained in the $G$-orbit of $C^{g_0}$. 
\item Let $P\in C^{g_0}$, then $\tr_B(g_0;\Gamma_{G_P}(\sE|_P))=0$ if
  $P$ is contained in an irreducible component of dimension $1$ of
  $C^{g_0}$ (equivalently, $g_0\in G_P^{\circ}$). Otherwise ({\it i.e.}, $P$ is an isolated point of $C^{g_0}$) we have 
\[
\tr_B(g_0;\Gamma_{G_P}(\sE|_P))=
\displaystyle \frac{\tr_B(g_0;\sE|_P)}{1-\tr_B(g_0; \theta_P)}
\]
\end{enumerate}
\end{lem}

\begin{proof}
(1) For any irreducible closed subset $Z$ of $C$, we have by \eqref{eq: Gamma_G induced from Z}, 
\begin{equation}\label{eq:tr_g_0}
  \tr_B(g_0; \Gamma_G(\sE)_Z) =
  \displaystyle \frac{1}{|G|}\sum_{\text{\tiny$\begin{matrix} g\in G\\ g^{-1}g_0g\in G_Z \end{matrix}$}} \tr_B(g^{-1}g_0g; \Gamma_{G_Z}(\sE|_Z)), 
\end{equation}
the sum being trivial if there is no $g\in G$ such that
$g^{-1}g_0g\in G_Z$. The latter condition is equivalent to 
$g_0\in gG_Zg^{-1}=G_{gZ}$, or, $gZ\subseteq C^{g_0}$. As
\[ \tr_B(g^{-1}g_0g; \Gamma_{G_Z}(\sE|_Z))=
  \tr_B(g_0; \Gamma_{G_{gZ}}(\sE|_{gZ})),
\]
we have 
\begin{equation}
  \begin{split} 
  \tr_B(g_0; \Gamma_G(\sE)_Z) & 
    = \frac{1}{|G|}\sum_\text{\tiny$\begin{matrix} g\in G\\ gZ \subseteq C^{g_0} \end{matrix}$}
    \tr_B(g_0; \Gamma_{G_{gZ}}(\sE|_{gZ})) \\
  & = \frac{|G_{\{ Z\}}| }{|G|} \sum_{Z'} \tr_B(g_0;\Gamma_{G_{Z'}}(\sE|_{Z'}))
    \end{split} 
\end{equation}
where $Z'$ runs through the set $\{ gZ  \ | \ g\in G, \ gZ \subseteq C^{g_0} \}$.
\medskip

(2) By the definition of $\Gamma_{G_P}(\sE|_P)$, we have
\begin{multline}\label{eq: tr G_P}
  \tr_B(g_0;\Gamma_{G_P}(\sE|_P)) =\\
  \frac{1}{|\oG_P|}\sum_{d=0}^{|\oG_P|-1}\tr_B\left(g_0;d\theta_P^d\otimes[\sE|_P]\right)
  \cdot\tr_B\left(g_0;\frac{1}{|\oG_P|}\Ind_{G_P^\circ}^{G_P}[1_{G_P^\circ}]
  -[1_{G_P}]\right). 
\end{multline}
If $g_0\in G_P^\circ$ (so the component $C_i$ of $C$ containing $P$ is contained in
$C^{g_0}$), then
\[
\tr_B\left(g_0;\frac{1}{|\oG_P|}\Ind_{G_P^\circ}^{G_P}[1_{G_P^\circ}]-[1_{G_P}]\right) = 0,  
\]
hence $\tr_B(g_0;\Gamma_{G_P}(\sE|_P))=0$. If $g_0\in G_P\setminus G_P^\circ$, then 
\[
\tr_B\left(g_0;\frac{1}{|\oG_P|}\Ind_{G_P^\circ}^{G_P}[1_{G_P^\circ}]-[1_{G_P}]\right) = -1.
\]
As $\tr_B(g_0;\theta_P)\in \mu'_\infty(\CC) \setminus \{ 1 \}$ (because
the image of $g_0$ in $\Aut(\mathcal{O}_{C, P})$ is non trivial),
we have by \cite[Lemma~1.2]{Koc05} 
\[
\sum_{d=0}^{|\oG_P|-1}\tr_B(g_0;d\theta_P^d) = \frac{|\oG_P|}{\tr_B(g_0; \theta_P)-1}
\]
Plugging this into Equality \eqref{eq: tr G_P}, we obtain the desired equality.
\end{proof} 

\if false
  
\begin{equation}\label{eq: tr g_0}
    \begin{split}
&\tr_B(g_0; \Gamma_G(\sE)_Z) = \tr_B\left(g_0; \frac{|G_Z|}{|G|}\Ind_{G_Z}^G\Gamma_{G_Z}(\sE|_Z)\right) \\
&  =
  \begin{cases}
\displaystyle \frac{1}{|G|}\sum_{\text{\tiny$\begin{matrix} g\in G\\ g^{-1}g_0g\in G_Z \end{matrix}$}} \tr_B(g^{-1}g_0g; \Gamma_{G_Z}(\sE|_Z)) & \text{if $\displaystyle g_0\in \bigcup_{g\in G} gG_Zg^{-1}$},\\
 0 & \text{otherwise}
 \end{cases} 
    \end{split}
\end{equation}
Since $G_{g(Z)}=g\cdot G_Z\cdot g^{-1}$, we have 
\[
\bigcup_{g\in G} gG_Zg^{-1} = \bigcup_{g\in G} G_{g(Z)}.
\]
By assumption, $g_0\notin \cup_{g\in G} G_{g(Z)}$, we infer that $\tr_B(g_0;\Gamma_G(\sE)_Z)=0$.

\medskip

(4) For $g\in G$, we have the following implications:
\begin{equation}\label{eq: tr g_00}
\begin{split}
g^{-1}g_0g\in G_Z &\Longleftrightarrow g_0\in gG_Zg^{-1} = G_{g(Z)} \\
& \Longrightarrow  \tr_B(g^{-1}g_0g; \Gamma_{G_Z}(\sE|_Z)) = \tr_B(g_0, \Gamma_{G_{g(Z)}}(\sE|_{g(Z)}))
\end{split}
\end{equation}
It follows from \eqref{eq: tr g_00} that, if $g_0\in \bigcup_{g\in G} gG_Zg^{-1} $, then
\begin{equation}\label{eq: tr g_01}
    \begin{split}
       \tr_B(g_0; \Gamma_G(\sE)_Z) &= \frac{ |G_Z| }{|G|}\sum_{\text{\tiny$\begin{matrix}g\in R\\ g_0\in G_{g(Z)}\end{matrix}$}}\tr_B(g_0; \Gamma_{G_{g(Z)}}(\sE|_{g(Z)})) \\
       &= \frac{|G_Z| }{|G|} \sum_{\text{\tiny$\begin{matrix} Z'\subset G\cdot Z \\ g_0\in G_{Z'} \end{matrix}$}} \tr_B(g_0;\Gamma_{G_{Z'}}(\sE|_{Z'}))
    \end{split}
\end{equation}
where $R\subset G$ is a set of representatives for the coset $G/G_Z$, and   $Z'$ runs through the irreducible components of the orbit $G\cdot Z$ such that $g_0\in G_{Z'}$.

\medskip

\end{proof}
\fi





\if false 
\begin{lem}\label{lem: Gamma_G sm}
Let $g_0\in G_{\reg}$ be $(\chara\,k)$-regular (see \eqref{eq: G_reg} for definition), and $Z\subset C$ a closed irreducible subvariety, which is either a single point $P$ or a connected component of $C$. Then the following holds:
\begin{enumerate}
    \item $\Gamma_G(\sE)_{Z}=0$ if $G_Z$ is trivial. In particular, $\Gamma_G(\sE)_Z\neq 0$ only for finitely many $Z$'s.  
    \item For any $g\in G$, we have $\Gamma_G(\sE)_Z = \Gamma_G(\sE)_{g(Z)}$.
    \item If $g_0\notin\cup_{g\in G} G_{g(Z)}$, or equivalently, if $Z$ is not contained in the $G$-orbit of $C^{g_0}$, then $\tr_B(g_0;\Gamma_G(\sE)_Z)=0$.
    \item If $g_0\in \cup_{g\in G} G_{g(Z)}$, or equivalently, if $Z$ is contained in the $G$-orbit of $C^{g_0}$, then 
    \[
     \tr_B(g_0; \Gamma_G(\sE)_{Z}) = \frac{|G_Z|}{|G|}\sum_{\text{\tiny$\begin{matrix} Z'\subset G\cdot Z \\ g_0\in G_{Z'} \end{matrix}$}} \tr_B(g_0;\Gamma_{G_{Z'}}(\sE|_{Z'}))
    \]
    where $Z'$ runs through the irreducible components of the orbit $G\cdot Z$.
    \item If $Z$ is not a connected component of the orbit $G\cdot C^{g_0}$, then we have $\tr_B(g_0;\Gamma_G(\sE)_Z)=0$.
\end{enumerate}
\end{lem}
\begin{proof}

(5) By (4), we may assume that $Z$ is contained in the $G$-orbit of $C^{g_0}$. By assumption, $Z$ is not a component of the $G$-orbit of $C^{g_0}$, so $Z$ must be a point and $C^{g_0}$ contains a curve, say $C_i$ passing through a point of $G\cdot Z$. Therefore, for any $Z'\in G\cdot Z$ such that $g_0\in G_{Z'}$, we have $g_0\in G_{Z'}^\circ$. By Lemma~\ref{lem: tr G_Po vanish}, the summands $\tr_B(g_0;\Gamma_{G_{Z'}}(\sE|_{Z'}))$ in \eqref{eq: tr g_01} vanish, and so does their sum $ \tr_B(g_0; \Gamma_G(\sE)_Z)$.
\end{proof}
\fi

The following statement is a 
generalization of \cite[Theorem~1]{Koc05} 
to the case of possibly non-faithful action of a finite group on a
possibly disconnected smooth curve.  The proof is similar to
that of \cite{Koc05}, using the algebraic version of the Lefschetz fixed point theorem given by \cite{Don69}. 

\begin{thm}\label{thm: sm}
  Let $C$ be a (non necessarily connected) smooth projective curve over
  $k$,  and $G$ a finite group acting tamely on $C$.
Let $\sE$ be a $G$-equivariant
locally free sheaf of rank $r$. Then we have the following equality
    in $R_k(G)_\QQ$:  
\begin{equation}\label{eq: CW sm}
\chi_G(\sE) = \frac{1}{|G|}\chi(\sE)[k[G]]+ \sum_{Z\subseteq C}\Gamma_G(\sE)_Z 
\end{equation}
where $Z$ runs through the irreducible closed subvarieties of $C$.
\end{thm}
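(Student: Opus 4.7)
The plan is to derive the formula from the faithful-action result of Köck \cite[Thm.~1]{Koc05} in two successive reductions.

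First I would apply Corollary~\ref{lem: conn decomp} to the connected components $C_1,\dots,C_n$ of $C$, obtaining
\[
\chi_G(\sE) = \sum_i \frac{|G_i|}{|G|} \Ind_{G_i}^G \chi_{G_i}(\sE|_{C_i}).
\]
The right-hand side of \eqref{eq: CW sm} decomposes compatibly: the equality $\chi(\sE) = \sum_i \chi(\sE|_{C_i})$ together with $\Ind_{G_i}^G k[G_i] = k[G]$ reassembles the regular part; for any $P \in C_i$, the inclusion $G_P \subseteq G_i$ and transitivity of induction give $\Gamma_G(\sE)_P = \frac{|G_i|}{|G|} \Ind_{G_i}^G \Gamma_{G_i}(\sE|_{C_i})_P$, and unpacking Definition~\ref{defn: ramification module sm} yields the analogous identity for $\Gamma_G(\sE)_{C_i}$. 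It suffices therefore to prove \eqref{eq: CW sm} assuming $C$ is connected and $G = G_{\{C\}}$.

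Set $I = G_C$ and $\oG = G/I$. If $I = \{1\}$, then $\Gamma_G(\sE)_C$ vanishes, and our formula differs from \cite[Thm.~1]{Koc05} by the "regular-representation twisting" recorded in the introduction---specifically by the term $\sum_P \frac{(|G_P|-1)r}{2|G|} \Ind_{G_P}^G k[G_P]$, which Riemann--Hurwitz together with Lemmas~\ref{lem:ind_res} and~\ref{lem:MkG} absorbs into the coefficient of $[k[G]]$. For arbitrary $I$, I would verify \eqref{eq: CW sm} in $R_k(G)_\QQ$ by comparing Brauer characters on $p'$-elements $g \in G$, where $p$ is the characteristic exponent of $k$; tameness guarantees that both Brauer characters are well defined. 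For $g \notin I$, the fixed locus $C^g$ is finite and the Lefschetz--Riemann--Roch theorem of Baum--Fulton--Quart \cite{BFQ79} expresses $\tr(g \mid \chi_G(\sE))$ as a sum of local contributions at $P \in C^g$, matching the induced characters of $\sum_P \Gamma_G(\sE)_P$ as in the faithful case. For $g \in I$, which acts trivially on $C$ and $\sO_C$-linearly on $\sE$, one has $\tr(g \mid \chi_G(\sE)) = \tr(g \mid \chi_I(\sE))$; a direct computation gives $\tr(g \mid \Psi_P \otimes [\sE|_P]) = 0$ since $g$ acts trivially on every $\theta_P^d$, so the point contributions vanish and the trace must be reproduced by $\Gamma_G(\sE)_C$, which it is via the induced-character formula applied to $\Ind_I^G\bigl(\chi_I(\sE) - \tfrac{\chi(\sE)}{|I|}[k[I]]\bigr)$.

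The principal difficulty is the non-faithful case: the classical Lefschetz--Riemann--Roch formula degenerates on $g \in I$ since $\det(1 - g \mid T_P^*C) = 0$ for every $P$, so one must unfold the trace via the eigenspace decomposition of $\sE$ under the $\sO_C$-linear $g$-action and match it against the combinatorics of the induced character of $\Gamma_G(\sE)_C$. The $|I|/|G|$ normalization in Definition~\ref{defn: ramification module sm} is precisely what makes the contributions from $g \in I$ and $g \notin I$ assemble into a single identity in $R_k(G)_\QQ$.
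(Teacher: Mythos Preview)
Your approach is correct and rests on the same two pillars as the paper—Brauer character comparison via \cite[Cor.~17.10]{CR90} and the Baum--Fulton--Quart Lefschetz--Riemann--Roch theorem—but the paper organizes the argument more uniformly. It does not first reduce to connected components via Corollary~\ref{lem: conn decomp}, nor does it split off the case $g \in I$; instead it treats every $p$-regular $g \neq 1$ at once, observing that the connected components $Z$ of $C^g$ can be either isolated points \emph{or} entire irreducible components $C_i$ (namely when $g \in I_i$), and that in the latter case the BFQ79 local contribution at $Z = C_i$ (trivial conormal bundle, hence $\lambda_{-1}(N_Z^*)=1$) is exactly $\tr(g \mid \chi(\sE|_{C_i}))$, which one checks equals $\tr(g \mid \Gamma_G(\sE)_{C_i})$. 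Thus your separate treatment of $g \in I$ is subsumed into the paper's single invocation of BFQ79. Your route has the merit of making the passage from the faithful to the non-faithful case more explicit, and your remark that $\tr(g \mid \Psi_P \otimes [\sE|_P]) = 0$ for $g \in I$ (because $g$ acts trivially on each $\theta_P^d$ while $\deg \Psi_P = 0$) is a clean direct verification of why the point terms vanish on the inertia subgroup—something the paper leaves implicit in its appeal to \cite{Koc05}.
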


\begin{proof}
By \cite[page 149, Corollary 1]{Ser77} or the results recalled in Section~\ref{sec: character}, it suffices to show that the Brauer trace of any $g\in G_\reg$ on the both sides of \eqref{eq: CW sm} are equal (see \eqref{eq: Brauer trace} for the definition of the Brauer trace). 

For $g=1$, the Brauer trace on the left-hand side of \eqref{eq: CW sm}
\[
\tr_B(1; \chi_G(\sE)) = \dim_k H^0(C,\sE) - \dim_k H^1(C,\sE) = \chi(\sE).
\]
which is equal to the trace on the right-hand side, computed as follows:
\[
\tr_B\left(1; \frac{1}{|G|}\chi(\sE)[k[G]]+ \sum_{Z\subseteq C}\Gamma_G(\sE)_Z\right) =  \tr_B\left(1; \frac{1}{|G|}\chi(\sE)[k[G]]\right)=\chi(\sE).
\]
 
If $g\neq 1$, then by the Lefschetz fixed point theorem (\cite[(4.6)]{AS68} for
$k=\CC$ and \cite[Corollary 5.5]{Don69} for any algebraically closed field $k$),
we have
\begin{equation}\label{eq: CW sm Lefschetz}
\begin{split}
\tr_B(g; \chi_G(\sE))  = \sum_{P\in Z_0(C^g)} \frac{\tr_B(g;[\sE|_P])}{1-\tr_B(g; \theta_P)} + \sum_{E\in \mathrm{Z}_1(C^g)} \sum_{d=0}^{|g|-1}\tr_B(g;\phi^d)\int_{E}\ch(\sE|_{E,d})\td(E)
\end{split}
\end{equation}
where the notation in the above formula is as follows:
\begin{itemize}
\item $Z_0(C^g)$ is the set of the isolated points in the fixed locus $C^g$ of $g$; 
\item $\mathrm{Z}_1(C^g)$ is the set of the 1-dimensional components of $C^g$;
    \item for $0\leq d\leq |g|-1$, ${\sE|_{E,d}} $ denotes the eigensheaf of the $\langle g\rangle$-sheaf $\sE|_E$ corresponding to a character $\phi^d\colon \langle g\rangle\rightarrow \mu_\infty(k), g\mapsto \xi^d$, where $\xi\in\mu_\infty(k)$ is a primitive $|g|$-th root of unity;
    \item $\td(E)\in \CH^*(E)$ denotes the Todd class of the curve $E$.
\end{itemize}

Now we compute the trace of $g$ on the right-hand side of \eqref{eq: CW sm} for
a given $g\ne 1$. Then $\tr_B(g; k[G])=0$ (see \eqref{eq:trB_reg}), so 
\begin{equation}\label{eq: tr_B Gamma_G}
\begin{split}
\tr_B\left(g; \frac{1}{|G|}\chi(\sE)[k[G]]+ \sum_{Z\subseteq C}\Gamma_G(\sE)_Z\right) & = \tr_B\left(g; \sum_{Z\subseteq C}\Gamma_G(\sE)_Z\right).      
\end{split}
\end{equation}
We split the sum $\sum_{Z} \Gamma(\sE)_Z$ into the sum on the $\dim Z=0$ part
and the $\dim Z=1$ part.

By Lemma~\ref{lem: tr G_Po vanish}(1), 
\[
  \tr_B(g; \Gamma_G(\sE)_P)=\frac{|G_P|}{|G|}\sum_{Q\in C^g\cap GP}
  \tr_B(g; \Gamma_{G_Q}(\sE|_{Q})) 
\]
the right-hand side being zero if $C^g\cap GP=\emptyset$, that is, if
$P\notin GC^g$. 
Therefore, as $|G_Q|=|G_P|$ for $Q\in GP$, we have  
\[
  \begin{split} 
  \sum_{P\in C}  \tr_B(g; \Gamma_G(\sE)_P) = & \sum_
  \text{\tiny$\begin{matrix} P\in GC^g\\ Q\in C^g\cap GP \end{matrix}$} \frac{|G_P|}{|G|} \tr_B(g; \Gamma_{G_Q}(\sE|_{Q})) \\ 
    =& \sum_{Q\in C^g}\sum_{P\in GQ} \frac{|G_Q|}{|G|} \tr_B(g; \Gamma_{G_Q}(\sE|_{Q}))\\
    = & \sum_{Q\in C^g} \tr_B(g; \Gamma_{G_Q}(\sE|_{Q})). 
\end{split}    
\]
By Lemma~\ref{lem: tr G_Po vanish}(2),
\[
    \sum_{P\in C}  \tr_B(g; \Gamma_G(\sE)_P) =
\sum_{Q\in F^g} \frac{\tr_B(g;[\sE|_P])}{1-\tr_B(g; \theta_P)} 
\]
which is the first term of \eqref{eq: CW sm Lefschetz}.

\if false 
the Brauer trace $\tr_B\left(g; \Gamma_G(\sE)_Z\right)$ is nonzero only if $Z$ is a connected component of $G\cdot C^g$. Furthermore, using  Lemma~\ref{lem: Gamma_G sm} (4), we may write
\[
\eqref{eq: tr_B Gamma_G} = \sum_{Z\subset C^g} \frac{|G_Z|}{|G|}\tr_B(g; \Gamma_{G_Z}(\sE|_Z))
\]
where $Z$ runs through the connected components of $C^g$. If $Z=P$ is an isolated point of $C^g$, then by Lemma~\ref{lem: tr G_Po vanish}
\[
\tr_B(g; \Gamma_{G_P}(\sE|_P)) = \frac{\tr_B(g;[\sE|_P])}{1-\tr_B(g; [\theta_P])}
\]
\fi 

Reasonning in a similar way for closed irreducible subsets $Z\subseteq C$ of dimension $1$, we get 
\if false
  \[
\tr_B(g; \Gamma_G(\sE)_Z)=
\frac{|G_{\{ Z \}}|}{|G|}\sum_{Z'} \tr_B(g; \Gamma_{G_{Z'}}(\sE|_{Z'}))=
  \sum_{Z'} \frac{|G_{\{ Z' \}}|}{|G|}\tr_B(g; \Gamma_{G_{Z'}}(\sE|_{Z'}))
\]
where $Z'$ runs through the components of $GZ$ contained in $C^g$.
So the sum on the irreducible one-dimensional $Z$'s
\fi 
\[
\begin{split} 
  \sum_{Z\subset C} \tr_B(g; \Gamma_G(\sE)_Z)= & 
  \sum_Z \sum_
   \text{\tiny$\begin{matrix} E\in \mathrm{Z}_1(C^g)\\ E\subseteq GZ \end{matrix}$} \frac{|G_{\{ E \}}|}{|G|}\tr_B(g; \Gamma_{G_{E}}(\sE|_{E})) \\ 
  = & \sum_{E\in \mathrm{Z}_1(C^g)} \sum_{Z \subseteq GE}  \frac{|G_{\{ E \}}|}{|G|}\tr_B(g; \Gamma_{G_{E}}(\sE|_{E})) \\
  = & \sum_{E\in \mathrm{Z}_1(C^g)} \tr_B(g; \Gamma_{G_{E}}(\sE|_{E})) 
\end{split} 
\]
because $E$ is a component of $GZ$ if and only if $Z$ is a component of $GE$.
For $E\in \mathrm{Z}_1(C^g)$, we have $g\in G_E$,
$\tr_B(g, k[G_E])=0$ (see \eqref{eq:trB_reg}) and, 
by Definition~\ref{defn: ramification module sm},  
\[
\tr_B(g; \Gamma_{G_E}(\sE|_E)) = \tr_B\left(g; \chi_{G_E}(\sE|_{E}) - \frac{\chi(\sE|_{E})}{|G_E|}[k[G_E]]\right) 
= \tr_B\left(g; \chi_{G_E}(\sE|_{E})\right)
\]
with $G_E$ acting trivially on $E$. By Hirzebruch--Riemann--Roch theorem, 
\[
\tr_B\left(g; \chi_{G_E}(\sE|_{E})\right) =\sum_{d=0}^{|g|-1}\tr_B\left(g; \chi_{G_E}(\sE|_{E,d})\right) =  \sum_{d=0}^{|g|-1}\tr_B(g;\phi^d)\int_{E}\ch(\sE|_{E,d})\td(E).
\]

In conclusion, we have shown that traces of any $g\in G_\reg$ on the right hands of \eqref{eq: CW sm Lefschetz} and \eqref{eq: tr_B Gamma_G} coincide, and the theorem follows.
\end{proof}

\begin{cor}\label{cor: sm}
  Keep the hypothesis of Theorem~\ref{thm: sm} and notation of
  Set-up~\ref{setup: curve}. Then we have the following equality
  in $R_k(G)_\QQ$:  
\begin{equation}\label{eq: CW sm 2}
\chi_G(\sE) = \sum_{1\leq i\leq n}\frac{|I_i|}{|G|} \Ind_{I_i}^G
\chi_{I_i}(\sE|_{C_i})+ \sum_{P\in C}\Gamma_G(\sE)_P. 
\end{equation}
\end{cor}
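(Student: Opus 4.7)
The plan is to derive the corollary by algebraic manipulation from Theorem~\ref{thm: sm}. First I would split the sum over irreducible subvarieties $Z \subseteq C$ in \eqref{eq: CW sm} according to dimension: the 1-dimensional $Z$'s are exactly the irreducible components $C_i$, and the 0-dimensional ones are the points $P\in C$. This gives
\[
\chi_G(\sE) = \frac{1}{|G|}\chi(\sE)[k[G]] + \sum_{1\leq i\leq n} \Gamma_G(\sE)_{C_i} + \sum_{P\in C}\Gamma_G(\sE)_P.
\]
Since the point contributions already agree with those in \eqref{eq: CW sm 2}, it suffices to show that
\[
\frac{1}{|G|}\chi(\sE)[k[G]] + \sum_{1\leq i\leq n} \Gamma_G(\sE)_{C_i} = \sum_{1\leq i\leq n}\frac{|I_i|}{|G|} \Ind_{I_i}^G \chi_{I_i}(\sE|_{C_i}).
\]

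Next I would substitute the definition of $\Gamma_G(\sE)_{C_i}$ from Definition~\ref{defn: ramification module sm}, which expands as
\[
\Gamma_G(\sE)_{C_i} = \frac{|I_i|}{|G|}\Ind_{I_i}^G \chi_{I_i}(\sE|_{C_i}) - \frac{\chi(\sE|_{C_i})}{|G|}\Ind_{I_i}^G [k[I_i]].
\]
The induced module satisfies $\Ind_{I_i}^G [k[I_i]] = [k[G]]$, by the elementary identity $k[G]\otimes_{k[I_i]} k[I_i]\simeq k[G]$ (this is also a special case of Lemma~\ref{lem:ind_res}(2) applied to the quotient $G\to G/I_i$ combined with transitivity of induction, but is in any case immediate). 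Thus after the cancellation of the $\Ind_{I_i}^G \chi_{I_i}(\sE|_{C_i})$ terms, what must be verified is
\[
\chi(\sE)\,[k[G]] = \sum_{1\leq i\leq n} \chi(\sE|_{C_i})\,[k[G]].
\]

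Finally, because $C$ is smooth, its irreducible components $C_i$ are pairwise disjoint and coincide with the connected components, so $C=\bigsqcup_i C_i$ as schemes and hence $H^q(C,\sE)=\bigoplus_i H^q(C_i,\sE|_{C_i})$ for all $q$, whence $\chi(\sE)=\sum_i \chi(\sE|_{C_i})$. This yields the required identity, completing the derivation. The argument is entirely formal given Theorem~\ref{thm: sm}, so there is no real obstacle; the only point that needs to be kept in mind is the distinction between the inertia groups $I_i$ and the decomposition groups $G_i$ of Set-up~\ref{setup: curve}, which is why $\Ind_{I_i}^G$ (not $\Ind_{G_i}^G$) naturally appears here.
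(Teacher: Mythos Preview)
Your proof is correct and follows essentially the same approach as the paper's own proof: split the sum in Theorem~\ref{thm: sm} into component and point contributions, use $\chi(\sE)=\sum_i \chi(\sE|_{C_i})$, and then absorb the regular-representation term into the $\Gamma_G(\sE)_{C_i}$ via Definition~\ref{defn: ramification module sm} together with $\Ind_{I_i}^G[k[I_i]]=[k[G]]$. The paper states this more tersely, but the logic is identical.
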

\begin{proof}
In view of Theorem~\ref{thm: sm}, it suffices to notice that $\chi(\sE) = \sum_{1\leq i\leq n}\chi(\sE|_{C_i})$, and that
\begin{align*}
\frac{1}{|G|}\chi(\sE)[k[G]]+ \sum_{1\leq i\leq n}\Gamma_G(\sE)_{C_i} &=\frac{1}{|G|} \sum_{1\leq i\leq n}\chi(\sE|_{C_i})[k[G]]+\sum_{1\leq i\leq n}\Gamma_G(\sE)_{C_i} \\
&=\sum_{1\leq i\leq n}\frac{|I_i|}{|G|} \Ind_{I_i}^G \chi_{I_i}(\sE|_{C_i}).
\end{align*}
The last equality follows from Definition~\ref{defn: ramification module sm}.
\end{proof}

\begin{rmk} For the computation of $\Gamma_G(\sE)_P$, we also use the
  following  equality for $P\in C_i\subseteq C$
  (obtained by applying Lemma~\ref{lem:MkG} to $[k[\oG_P]]$) 
  \begin{equation} \label{eq:Gamma_G(E)_P}
    \Gamma_G(\sE)_P=\frac{|I_i|}{|G|}\Ind_{G_P}^G (\Psi_P\otimes [\sE|_P])  
  \end{equation}
  where
  \[ \Psi_P= \frac{|\oG_P|-1}{2}[k[\oG_P]] -\sum_{d=0}^{|\oG_P|-1} d\theta_P^d.\]   
\end{rmk} 

\begin{rmk} 
For comparison, we show how, if $C$ is connected and the action of $G$ on $C$ is faithful and tame, Theorem~\ref{thm: sm} recovers \cite[Theorem~1]{Koc05}. In this case, $\Gamma_G(\sE)_C=0$.
By Riemann--Hurwitz's theorem we have   
$|G|\chi(\sO_D)=\chi(\sO_C)+\frac{1}{2}\deg R$, where
$R=\sum_{P\in C} (|G_P|-1)[P]$ is the 
ramification divisor of the quotient map $\pi\colon C\to D$.  As
$\chi(\sE)=r\chi(\sO_C)+\deg \sE$, where $r=\rk \sE$, 
by Theorem~\ref{thm: sm}, $\chi_G(\sE)$ is equal to    
\begin{align*} 
                 \frac{1}{|G|}\chi(\sE) [k[G]] +
  \sum_{P\in C} \frac{1}{|G|}\Ind_{G_P}^{G}\left(r\frac{|G_P|-1}{2}[k[G_P]]-
  \sum_{d=0}^{|G_P|-1} d\theta_P^d \otimes [\sE|_P]\right)\\
  =\left(r\chi(\sO_D)+\frac{1}{|G|}\deg\sE\right)[k[G]] - \frac{1}{|G|}    \sum_{P\in C} \Ind_{G_P}^{G}\left( \sum_{d=0}^{|G_P|-1} d\theta_P^d \otimes [\sE|_P]\right). 
\end{align*}
which is \cite[Theorem~1]{Koc05}.
\end{rmk}

\begin{ex} Let $C$ be a
  hyperelliptic curve of genus $g$ over $k$ of $\chara(k)\ne 2$.
  Let $G\subseteq \Aut(C)$ be generated by the hyperelliptic
  involution. Let $P_0\in C$ be a 
  Weierstrass point, $q\in \ZZ$ and $\sE=\omega_C(q P_0)$.
  Denote by $\tau$ the nontrivial character $G\to \{ \pm 1 \}$.
  Then $[\sE|_{P_0}]=\theta_{P_0}\otimes \theta_{P_0}^{-q}=\tau^{1-q}$
  and $[\sE|_P]=\theta_P=\tau$ if $P\ne P_0$.  
For a Weierstrass point $P$, we have
\[
\Gamma_G(\sE)_{P}=\frac{1}{2}\left(\frac{1}{2}[k[G]]-\tau\right)\otimes [\sE|_{P}] = 
\begin{cases}
 \frac{1}{4}[k[G]]-\frac{1}{2}[1_G] & \text{if $P\neq P_0$} \\
  \frac{1}{4}[k[G]]-\frac{1}{2}\tau^{q} &\text{if $P= P_0$} 
\end{cases}
\]
Therefore
  \begin{align*}
    \chi_G(\omega_C(q P_0))&=\frac{1}{2}\chi(\omega_C(qP_0))[k[G]] + \sum_{P\in C^G} \Gamma_G(\sE)_{P} \\ &= \frac{1}{2}(g+q-1)[k[G]] + (2g+1)\left( \frac{1}{4}[k[G]]-\frac{1}{2}[1_G]\right) + \left(\frac{1}{4}[k[G]]-\frac{1}{2}\tau^{q}\right) \\
    &=\frac{(q-1)}{2}[k[G]]+ g\tau + 
    \frac{1}{2}(\tau-\tau^q) \\
   &=\lfloor\frac{q-1}{2}\rfloor[k[G]]+(g+\epsilon)\tau       
  \end{align*}
  with $\epsilon=1$ if $q$ is even, $\epsilon=0$ otherwise.
  See \cite[Proposition 3]{Kan86} for much more general results. 
\end{ex}
  
\begin{ex} \label{ex:etale_nonfree} Let $G$ be a finite
  group acting faithfully on a quasi-projective variety $X$ over $k$. When $X$ is
  connected, the quotient morphism $X\to X/G$ is \'etale if and only
  if $G$ acts freely on $X$. But this is no longer true when $X$ is not
  connected. Below we construct an example with $X\to X/G$ \'etale, $X/G$
  smooth and connected, but all points of $X$  have nontrivial stabilizer.
  Let $G$ be a finite group having a nontrivial subgroup $I$
such that $\cap_{g\in G} gIg^{-1}=\{ 1 \}$ (e.g. $G=S_3$ and $I$ any subgroup of order $2$). Let $Y$ be a
  projective smooth connected variety over $k$ and let
  \[ X=\Ind_I^G Y:=\{(y, a) \in Y \times (G/I) \}\]
  endowed with the action of $G$ defined by
  $g*(y, a)=(y, g*a)$, where $g*a$ is the natural action (on the left)
  of $G$ on the left cosets $G/I$. This action is faithful by the
   hypothesis on $I$.
  The quotient morphism $X\to Y$ is \'etale 
because it is an isomorphism when restricted to any connected
component $X_a:= Y \times \{ a \}$ of $X$. Finally if $a=gI$, then
the inertia and decomposition groups of $X_a$ are $I_a:=gIg^{-1}\ne \{ 1 \}$.  

For any $G$-equivariant coherent sheaf $\sE$ on $X$ 
such that $I_a$ acts trivially on $\sE|_{X_a}$ for $a\in G/I$,
we have
\[
\chi_G(\sE)=\frac{1}{|G|} \sum_{a\in G/H} \chi(\sE|_{X_a})
|I_a|\Ind_{I_a}^G{1_{I_a}}=\frac{1}{|G|}\chi(\sE) (|I|\Ind_{I}^G [1_{I}]). 
\]
But $|I|\Ind_{H}^G 1_{I}\ne [k[G]]$ because
their characters are different.
Therefore, the isomorphism~\eqref{eq:free} is 
false as soon as $\chi(\sE)\ne 0$. In particular,
\cite[Theoreom 2.4]{EL80} as stated is incorrect; it requires the freeness of the $G$-action, in order to be valid.  
\end{ex}

We compute the virtual multiplicity of $[1_G]$ in the $G$-equivariant Euler
characteristic of a pluri-log-canonical sheaf on a log smooth
$G$-curve.

\begin{prop}\label{prop: log smooth}
  Let $C$ be as in Theorem~\ref{thm: sm} and $G$  a tame finite subgroup of $\Aut(C)$. Let $T$ be a
$G$-invariant finite subset of $C$, also viewed as a reduced divisor on $C$.  Let
$\oT=\pi(T)$, where $\pi\colon C\rightarrow D=C/G$ is the quotient map. Then, for any integer $m$, 
\begin{equation}
\langle [1_G], \chi_G(\omega_C(T)^{\otimes m}) \rangle=\chi(\omega_D(\oT)^{\otimes m}) + \sum_{Q\in D\setminus \oT}\left\lfloor m\left(1-\frac{1}{|e_Q|}\right)\right\rfloor
\end{equation}
where for $Q\in D\setminus\oT$, $e_Q:=e_P=|\oG_P|$ is the ramification
index of $C\to D$ at any $P\in \pi^{-1}(Q)$. 
\end{prop}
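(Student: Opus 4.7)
The plan is to identify the multiplicity $\langle[1_G],\chi_G(\omega_C(T)^{\otimes m})\rangle$ with the Euler characteristic of an explicit line bundle on the quotient curve $D$, and then apply Riemann--Roch on $D$. First, using Corollary~\ref{lem: conn decomp} to pass to $G$-orbits of connected components of $C$, and then factoring out each inertia subgroup $I_j$, which acts trivially on the component $C_j$ and hence on the restricted pluricanonical sheaf $\omega_C(T)^{\otimes m}|_{C_j}$, Frobenius reciprocity (Lemma~\ref{lem:ind_res}) reduces the claim to the case in which $C$ is connected and $\pi\colon C\to D$ is a tame Galois cover of smooth connected curves.

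Since $\pi$ is finite, Leray yields $H^j(C,\omega_C(T)^{\otimes m})\cong H^j(D,\pi_*\omega_C(T)^{\otimes m})$ as $k[G]$-modules. The tameness of the stabilizers $\oG_P$ (whose orders are prime to $\chara k$) provides local Reynolds operators, so the functor $\sF\mapsto (\pi_*\sF)^G$ is exact on $G$-equivariant coherent sheaves, giving
\[
\langle[1_G],\chi_G(\omega_C(T)^{\otimes m})\rangle = \chi\bigl(D,(\pi_*\omega_C(T)^{\otimes m})^G\bigr).
\]
A local computation at each $Q\in D$ identifies this invariant sheaf: pick $P\in\pi^{-1}(Q)$ together with uniformizers $s$ at $Q$ and $t$ at $P$ satisfying $s = u \cdot t^{e_Q}$ for a unit $u\in\sO_{C,P}^{\times}$ (possible by tameness), write a local generator of $\omega_C(T)^{\otimes m}$ as $(dt/t^{\delta_P})^{\otimes m}$ with $\delta_P$ the indicator of $P\in T$, and solve for those monomials $t^i h(s)$ for which $t^i h(s) (dt/t^{\delta_P})^{\otimes m}$ is $\oG_P$-invariant under the faithful character through which $\oG_P$ acts on $t$. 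Converting back to the $s$-coordinate and using $\lfloor m(e_Q-1)/e_Q\rfloor = m - \lceil m/e_Q\rceil$ yields
\[
(\pi_*\omega_C(T)^{\otimes m})^G \cong \omega_D(\oT)^{\otimes m}\otimes\sO_D(E),\qquad E:=\sum_{Q\in D\setminus\oT}\left\lfloor m\left(1-\tfrac{1}{e_Q}\right)\right\rfloor Q.
\]

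Finally, Riemann--Roch on the smooth curve $D$ gives $\chi(D,\omega_D(\oT)^{\otimes m}(E)) = \chi(D,\omega_D(\oT)^{\otimes m}) + \deg E$, which is the asserted identity. The main obstacle is the local invariants calculation, whose bookkeeping with ceiling and floor functions must handle uniformly both positive and negative $m$ and must separate the case of ramified points lying in $T$ (whose contribution $mQ$ is absorbed into $\omega_D(\oT)^{\otimes m}$) from those outside $T$ (which produce the correction divisor $E$). A secondary technical point is the exactness of $(-)^G\circ\pi_*$ invoked above; it relies on tameness of the stabilizers rather than on coprimality of $|G|$ with $\chara k$, and follows from stalkwise Reynolds operators on $D$.
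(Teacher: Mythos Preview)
Your approach is genuinely different from the paper's. The paper applies its Chevalley--Weil formula (Corollary~\ref{cor: sm}) to expand $\chi_G(\omega_C(T)^{\otimes m})$, reads off the coefficient of $[1_G]$ term by term via Frobenius reciprocity, and then collapses the resulting local sum using Riemann--Roch on each component $C_i$ together with the Riemann--Hurwitz relations \eqref{eq: RH1}--\eqref{eq: RH3}. You bypass the Chevalley--Weil machinery altogether: after reducing to the connected faithful case you recognize $\langle[1_G],\chi_G(\sE)\rangle$ as $\chi\bigl(D,(\pi_*\sE)^G\bigr)$, identify $(\pi_*\omega_C(T)^{\otimes m})^G$ by the classical local uniformizer computation, and finish with Riemann--Roch on $D$. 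Your route is shorter and self-contained (it is essentially Chevalley and Weil's original computation of invariant differentials); the paper's route has the virtue of illustrating how Proposition~\ref{prop: log smooth} falls out of their general formula.

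There is, however, a genuine gap in your passage from $\langle[1_G],\chi_G(\sE)\rangle$ to $\chi\bigl(D,(\pi_*\sE)^G\bigr)$. Even granting exactness of $\sE\mapsto(\pi_*\sE)^G$, what you obtain is $\sum_i(-1)^i\dim_k H^i(C,\sE)^G$. This equals $\langle[1_G],\chi_G(\sE)\rangle$ only if $\langle[1_G],[V]\rangle=\dim_k V^G$ for every $G$-module $V$, i.e.\ only if $k[G]$ is semisimple. Tameness of the stabilizers is \emph{not} enough: for a free (hence tame) $\ZZ/p\ZZ$-cover $C\to D$ in characteristic $p$ one has $R_k(G)=\ZZ\cdot[1_G]$, so $\langle[1_G],\chi_G(\sO_C)\rangle=\chi(\sO_C)=p\,\chi(\sO_D)$, whereas your method gives $\chi\bigl((\pi_*\sO_C)^G\bigr)=\chi(\sO_D)$. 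Relatedly, the ``stalkwise Reynolds operator'' you invoke does not exist in such Artin--Schreier examples (there is no $G$-equivariant retraction $\pi_*\sO_C\to\sO_D$), although the exactness you actually need can be rescued by Shapiro's lemma and the tameness of each $G_P$. The paper's own proof commits the parallel slip of using Frobenius reciprocity for the composition-multiplicity pairing on $R_k(G)$, which likewise fails when $p$ divides $|G|$; both arguments are correct under the stronger hypothesis $|G|\in k^{\times}$, which is in any case imposed in all downstream applications (e.g.\ Corollary~\ref{cor: pluricanonical G-inv} and Theorem~\ref{thm: Def(C,G)}).
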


\begin{proof} Keep the notation of Set-up~\ref{setup: curve}. 
Denote $\sE=(\omega_C(T))^{\otimes m}$. Since $I_i$ acts trivially on
$\sE|_{C_i}$, we have $\chi_{I_i}(\sE|_{C_i}) =
\chi(\sE|_{C_i})[1_{I_i}]$. By Corollary~\ref{cor: sm}, we have 
\[
\chi_G(\sE) = \sum_{1\leq i\leq n}\frac{|I_i|}{|G|} \Ind_{I_i}^G \chi(\sE|_{C_i})[1_{I_i}]+ \sum_{P\in C}\Gamma_G(\sE)_P
\]
Let $T_i=T\cap C_i$. Then for $P\in C_i\setminus T_i$, we have 
\[
[\sE|_P]=\left[\omega_C^{\otimes m}|_P\right] = \theta_P^{m_P}
\]
where $1\leq m_P\leq e_P$ satisfies $m_P\equiv m \mod e_P$, and hence
\[
\Gamma_G(\sE)_P=\Gamma_G(\omega_C^{\otimes m})_P = \frac{|I_i|}{|G|}\Ind_{G_P}^G
\left(\frac{e_P-1}{2}[k[\oG_P]]-\sum_{d=0}^{e_P-1}d\theta_P^{d+m_P}\right). 
\]
For any $P\in T_i$, we have (see \eqref{eq: theta_P})
\[
[\sE|_P]=\left[\omega_C(T)^{\otimes m}|_P\right] = [1_{G_P}]
\]
and hence
\[
\Gamma_G(\sE)_P = \frac{|I_i|}{|G|}\Ind_{G_P}^G
\left(\frac{e_P-1}{2}[k[\oG_P]]-\sum_{d=0}^{e_P-1}d\theta_P^d\right). 
\]
Thus, using the Frobenius reciprocity \eqref{eq: Frobenius2}, the virtual multiplicity of $[1_G]$ in $\chi_G(\sE)$ is
\begin{multline}\label{eq: 1G in E}
\langle [1_G], \chi_G(\sE) \rangle =  \sum_{1\leq i\leq n}\frac{|I_i|}{|G|}\chi(\sE|_{C_i})\langle[1_{I_i}], [1_{I_i}]\rangle+ \sum_{P\in C}\langle [1_G],\Gamma_G(\sE)_P\rangle \\
=\sum_{1\leq i\leq n} \frac{|I_i|}{|G|}\left( \chi(\sE|_{C_i}) +
  \sum_{P\in C_i} \frac{e_P-1}{2} - \sum_{P\in C_i\setminus T_i}
  (e_P-m_P) \right). 
\end{multline}
By the Riemann--Roch theorem, we have
\begin{equation}\label{eq: RR}
\chi(\sE|_{C_i}) = \chi(\sO_{C_i}) + \deg\sE|_{C_i} 
\end{equation}
By the Riemann--Hurwitz formula, we have
\begin{equation}\label{eq: RH1}
  \chi(\sO_{C_i})  =\frac{|G_i|}{|I_i|}\chi(\sO_{\pi(C_i)}) - \frac{1}{2}\sum_{P\in C_i}(e_P-1).
\end{equation}
Note that $\sE|_{C_i} = \pi^*(\omega_{\pi(C_i)}(\oT_i)^{\otimes m})\otimes\sO_{C_i}(\sum_{P\in C_i\setminus T_i}(e_P-1)P)$, where $\oT_i=\pi(T_i)$, so 
\begin{equation}\label{eq: deg E|C_i}
 \deg\sE|_{C_i} = \frac{|G_i|}{|I_i|}\deg \omega_{\pi(C_i)}(\oT_i)^{\otimes m} + \sum_{P\in C_i\setminus T_i}(e_P-1).   
\end{equation}
By construction $e_P \mid m-m_P$. It is then easy to see that  
\begin{equation}\label{eq: e_P m_P}
  m(e_P-1)-(e_P-m_P)=e_P\lfloor m(1-1/e_P)\rfloor.     
\end{equation}
Assembling \eqref{eq: RR}--\eqref{eq: e_P m_P}, we get
\begin{multline*} 
  \chi(\sE|_{C_i})+\sum_{P\in C_i} \frac{e_P-1}{2} -
  \sum_{P\in C_i\setminus T_i} (e_P-m_P) = \\ 
  \frac{|G_i|}{|I_i|} \chi(\omega_{\pi(C_i)}(\oT_i)^{\otimes m}) +
   \sum_{P\in C_i\setminus T_i} e_P\lfloor m(1-1/e_P)\rfloor.  
 \end{multline*}
 Now notice that there are $|G|/|G_i|$ components of $C$ lying over 
 $\pi(C_i)$, and $|G_i/I_i|/e_P$ points of $C_i$ lying over $\pi(P)$. 
 We then get the desired equality by using \eqref{eq: 1G in E}.
 \end{proof}

 \subsection{The Chevalley--Weil formula for singular curves}\label{sec: CW singular}
Let the notation be as in Set-up~\ref{setup: curve}. 
\begin{defn}[Ramification modules on a singular $G$-curve]\label{defn: ramification module singular}
For each irreducible component $C_i$, we define
    \[
    \Gamma_G(\sE)_{C_i} := \Gamma_G(\nu^*\sE)_{\tC_i}
    \]
where $\tC_i$ is the irreducible component of $\tC$ lying over $C_i$.

For a point $P\in C$,  define
\[
\Gamma_G(\sE)_P =\sum_{\tP\in \nu^{-1}(P)}\Gamma_G(\nu^*\sE)_{\tP} + \frac{|G_P|}{|G|}\Ind_{G_P}^G\left(\frac{1}{|G_P|}\dim_k (\sE\otimes \sS)_P [k[G_P]] - [(\sE\otimes \sS)_P]\right)
\]
We note that $\Gamma_G(\sE)_P\neq 0$ for only finitely many points $P\in C$.
\end{defn}

\begin{thm}\label{thm: CW singular}
The following equality holds in $R_k(G)_\QQ$
   \begin{equation}\label{eq: CW singular}
\chi_G(\sE) = \frac{1}{|G|}\chi(\sE)[k[G]] + \sum_{Z\subseteq C} \Gamma_G(\sE)_Z
\end{equation}
where the sum is taken over all irreducible subvarieties $Z$ of $C$.
\end{thm}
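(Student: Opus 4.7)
The plan is to bootstrap the smooth case (Theorem~\ref{thm: sm}) via the normalization $\nu\colon \tC \to C$, using the $G$-equivariant short exact sequence \eqref{eq:exactsnE}. Since $\nu$ is finite, cohomology is preserved by $\nu_*$ as $G$-modules, so the additivity identity \eqref{eq: chi_G F locally free} reads
\[
\chi_G(\sE) = \chi_G(\nu^*\sE) - \chi_G(\sE \otimes \sS),
\]
where $\chi_G(\nu^*\sE)$ is computed on the smooth (possibly disconnected) curve $\tC$ and $\sS$ is supported on the singular locus $S$. The first step is to expand $\chi_G(\nu^*\sE)$ by Theorem~\ref{thm: sm}:
\[
\chi_G(\nu^*\sE)=\tfrac{1}{|G|}\chi(\nu^*\sE)[k[G]] + \sum_i \Gamma_G(\nu^*\sE)_{\tC_i} + \sum_{\tP \in \tC} \Gamma_G(\nu^*\sE)_{\tP}.
\]

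The next step is to handle $\chi_G(\sE\otimes\sS)$. Since $\sS$ is a skyscraper sheaf on the finite $G$-set $S$, whose connected components are the individual points $P\in S$, Lemma~\ref{lem: conn decomp} (equivalently Lemma~\ref{lem:ind_connect}) gives
\[
\chi_G(\sE\otimes\sS) = \sum_{P\in S} \tfrac{|G_P|}{|G|}\Ind_{G_P}^G [(\sE\otimes\sS)_P].
\]
Taking ordinary Euler characteristics in \eqref{eq:exactsnE} yields the numerical identity $\chi(\nu^*\sE) = \chi(\sE) + \sum_{P\in S}\dim_k(\sE\otimes\sS)_P$, which splits the regular piece as $\tfrac{1}{|G|}\chi(\sE)[k[G]]$ plus one scalar-multiple-of-$[k[G]]$ contribution per singular point.

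The final step is the bookkeeping at singular points. Using the identity $[k[G]] = \Ind_{G_P}^G [k[G_P]]$, I would combine the $P$-th scalar contribution with the matching term of $-\chi_G(\sE\otimes\sS)$ to get
\[
\tfrac{|G_P|}{|G|}\Ind_{G_P}^G\!\left(\tfrac{1}{|G_P|}\dim_k(\sE\otimes\sS)_P\,[k[G_P]] - [(\sE\otimes\sS)_P]\right),
\]
which is exactly the singularity correction appearing in Definition~\ref{defn: ramification module singular}. Adding $\sum_{\tP\in\nu^{-1}(P)}\Gamma_G(\nu^*\sE)_{\tP}$ produces $\Gamma_G(\sE)_P$; for $P\notin S$ the singularity correction vanishes and $\nu^{-1}(P)=\{\tP\}$ is a single point, so $\Gamma_G(\sE)_P = \Gamma_G(\nu^*\sE)_{\tP}$, while the component terms match by the very definition $\Gamma_G(\sE)_{C_i}:=\Gamma_G(\nu^*\sE)_{\tC_i}$. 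The main obstacle I expect is this bookkeeping at singular points: one must carefully track the factors $|G_P|/|G|$ and reconcile the scalar excess $\chi(\nu^*\sE)-\chi(\sE)$ against the representation-theoretic expression $\chi_G(\sE\otimes\sS)$. Once the induction identity is applied cleanly, no further geometric input is needed, and the tame hypothesis enters only through Theorem~\ref{thm: sm}.
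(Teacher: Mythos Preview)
Your proposal is correct and follows essentially the same route as the paper: reduce to $\tC$ via \eqref{eq: chi_G F locally free}, apply Theorem~\ref{thm: sm} on the normalization, use Corollary~\ref{lem: conn decomp} to decompose $[\sE\otimes\sS]$ over the singular points, and absorb the discrepancy $\chi(\nu^*\sE)-\chi(\sE)=\sum_P\dim_k(\sE\otimes\sS)_P$ into the $[k[G]]$-coefficient so that everything regroups as in Definition~\ref{defn: ramification module singular}. Your bookkeeping at $P\in S$ is in fact spelled out more explicitly than in the paper, which simply says ``taking the definition of $\Gamma_G(\sE)_Z$ into consideration.''
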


\begin{proof}
By \eqref{eq: chi_G F locally free}, we have
    \begin{equation}\label{eq: chi E vs tE}
        \chi_G(\sE) = \chi_G(\nu_*\nu^*\sE) - [\sE\otimes \sS] =\chi_G(\nu^*\sE) - [\sE\otimes \sS]     
    \end{equation}
By Theorem~\ref{thm: sm}, we have
\begin{equation}\label{eq: CW singular1}
\chi_G(\nu^*\sE) = \frac{1}{|G|}\chi(\nu^*\sE)[k[G]] + \sum_{\tZ\subseteq \tC} \Gamma_G(\nu^*\sE)_{\tZ},  
\end{equation}
where $\tZ$ runs through the irreducible subvarieties of $\tC$. We have 
\begin{equation}\label{eq: CW singular2}
\chi(\nu^*\sE) =\chi(\sE) + \sum_{P\in C}\dim_k (\sE\otimes \sS)_P.
\end{equation}
Applying Corollary~\ref{lem: conn decomp}  to $\sE\otimes \sS$ over $\Supp \sS$, we have
\begin{equation}\label{eq: CW singular3}
[\sE\otimes \sS] = \sum_{P\in S}\frac{|G_P|}{|G|}\Ind_{G_P}^G [(\sE\otimes \sS)_P]
\end{equation}
Plugging \eqref{eq: CW singular1}--\eqref{eq: CW singular3} into \eqref{eq: chi E vs tE},  we obtain the desired expression~\eqref{eq: CW singular}.
\end{proof}

\begin{cor}\label{cor: E vs F}
  Let $C$
  be a (possibly disconnected) proper reduced curve over $k$, and
  $G\subseteq \Aut(C)$ a tame finite subgroup acting freely on
    a dense open subset of $C$. Let $\sE$
and $\sF$ be two locally free $G$-sheaves of rank $r$ such that
$[\sE\otimes \sS_P] = [\sF\otimes \sS_P]\in R_k(G_P)$ for all points $P\in C$. Then 
\[
\chi_G(\sE) - \chi_G(\sF) = \frac{1}{|G|}(\deg \sE - \deg\sF)[k[G]].
\]
\end{cor}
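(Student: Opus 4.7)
The plan is to apply Theorem~\ref{thm: CW singular} to both $\sE$ and $\sF$, subtract, and verify that every local ramification contribution cancels. Since $G$ acts freely on a dense open subset of $C$, the inertia group $I_i = G_{C_i}$ of each irreducible component is trivial (otherwise $C_i$ would lie in the fixed locus of a nontrivial element, contradicting the density of the free locus on $C_i$), so the component ramification modules $\Gamma_G(\sE)_{C_i}$ and $\Gamma_G(\sF)_{C_i}$ vanish by Definition~\ref{defn: ramification module sm}. Since $\rk \sE = \rk \sF = r$, ordinary Riemann--Roch on the reduced curve $C$ gives $\chi(\sE) - \chi(\sF) = \deg \sE - \deg \sF$, so Theorem~\ref{thm: CW singular} reduces the desired identity to the pointwise cancellation
\[ \Gamma_G(\sE)_P = \Gamma_G(\sF)_P \quad \text{in } R_k(G)_\QQ \text{ for every } P \in C. \]

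By Definition~\ref{defn: ramification module singular}, $\Gamma_G(\sE)_P$ decomposes as a sum of smooth contributions $\sum_{\tP \in \nu^{-1}(P)} \Gamma_G(\nu^*\sE)_{\tP}$ and a singular correction depending only on $[(\sE \otimes \sS)_P] \in R_k(G_P)$ (and its dimension). The singular-correction pieces cancel directly by hypothesis. For the smooth pieces, note that $\sS_P$ is annihilated by $\gm_P$, so one has the canonical $G_P$-equivariant isomorphism $(\sE \otimes \sS)_P \cong \sE|_P \otimes_k \sS_P$, giving $[(\sE \otimes \sS)_P] = [\sE|_P] \cdot [\sS_P]$ in $R_k(G_P)$. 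When $\sS_P$ is one-dimensional over $k$ (as at every nodal singularity), $[\sS_P]$ is a character, hence a unit in $R_k(G_P)$, and the hypothesis forces $[\sE|_P] = [\sF|_P]$. Restricting to $G_{\tP} \subseteq G_P$ and using the natural $G_{\tP}$-equivariant identification $\nu^*\sE|_{\tP} \cong \sE|_P$ (valid because $\sE_P$ is free over $\sO_{C,P}$ and the residue fields of $\sO_{\tC, \tP}$ and $\sO_{C,P}$ coincide), we obtain $[\nu^*\sE|_{\tP}] = [\nu^*\sF|_{\tP}]$ in $R_k(G_{\tP})$ for every $\tP$ above a singular point of $C$. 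Since $\Gamma_G(\nu^*\sE)_{\tP}$ depends only on this fiber class and on the intrinsic element $\Psi_{\tP}$ (Definition~\ref{defn: ramification module sm}), the smooth pieces at such $\tP$ also agree.

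The main obstacle is the chain of $G_P$- and $G_{\tP}$-equivariant identifications that propagates the hypothesis on $(\sE \otimes \sS)_P$ down to the fibers $\nu^*\sE|_{\tP}$ appearing in the smooth ramification modules. The pivotal ingredient is the invertibility of the one-dimensional character $[\sS_P]$ in $R_k(G_P)$, which allows one to recover $[\sE|_P]$ from $[(\sE \otimes \sS)_P]$ and then to restrict to the branches lying above $P$.
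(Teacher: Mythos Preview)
Your strategy---apply Theorem~\ref{thm: CW singular} to both sheaves, subtract, and cancel the ramification terms $\Gamma_G(\cdot)_Z$---is exactly the paper's, and your handling of the component terms $\Gamma_G(\cdot)_{C_i}$ and of the Riemann--Roch step is fine. The genuine gap is in the pointwise cancellation $\Gamma_G(\sE)_P=\Gamma_G(\sF)_P$: you establish it only at \emph{singular} points, because your entire chain of identifications is driven by the hypothesis on $[(\sE\otimes\sS)_P]$, and $\sS_P=0$ at smooth points. But a smooth point $P$ with $G_P\neq\{1\}$ still contributes $\Gamma_G(\sE)_P=\frac{1}{|G|}\Ind_{G_P}^G(\Psi_P\otimes[\sE|_P])$, which depends on $[\sE|_P]\in R_k(G_P)$, and nothing in the stated hypothesis controls that class. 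In fact on a smooth curve the hypothesis is vacuous while the conclusion fails: take $\sE=\sO_C$ and $\sF=\sO_C$ twisted by a nontrivial character of $G$; then $\deg\sE=\deg\sF$ but $\chi_G(\sE)\neq\chi_G(\sF)$. So the gap lies in the corollary as written rather than in your execution: one needs $[\sE|_P]=[\sF|_P]$ in $R_k(G_P)$ for every $P$ (which is precisely what holds in the application immediately following, where $\sF\otimes\pi^*\sM$ and $\sF^{\oplus s}$ agree fiberwise everywhere), not merely the condition on $\sE\otimes\sS$. The paper's one-line proof asserts $\Gamma_G(\sE)_Z=\Gamma_G(\sF)_Z$ without further comment and thus hides the same issue.

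Two smaller points on your singular-point argument. First, the claim that $\sS_P$ is annihilated by $\gm_P$ is false for general reduced curve singularities (already at a tacnode $\gm_P\sS_P\neq 0$); the factorization $[(\sE\otimes\sS)_P]=[\sE|_P]\cdot[\sS_P]$ in $R_k(G_P)$ is nonetheless correct, but one should obtain it by filtering $\sS_P$ by powers of $\gm_P$ and using flatness of $\sE_P$, not by the annihilation claim. Second, the invertibility of $[\sS_P]$ requires $\dim_k\sS_P=1$, so your deduction $[\sE|_P]=[\sF|_P]$ is confined to singularities with $\delta$-invariant $1$. Both points become moot once the hypothesis is strengthened to $[\sE|_P]=[\sF|_P]$ at all $P$, which is what the argument actually needs.
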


\begin{proof}
    Under the assumptions of the corollary, we have $\Gamma_G(\sE)_Z= \Gamma_G(\sF)_Z$ for any irreducible subvariety of $C$. By Theorem~\ref{thm: CW singular}, we obtain
    \[
    \chi_G(\sE) - \chi_G(\sF) =\frac{1}{|G|}(\chi(\sE) - \chi(\sF))[k[G]]= \frac{1}{|G|}(\deg \sE - \deg\sF)[k[G]]
    \]
\end{proof}

The following corollaries generalize \cite[Proposition~4.2, Corollary~4.3]{EL80}.

\begin{cor}
Let the assumptions be as in Corollary~\ref{cor: E vs F}. Let $\sM$ be a locally free sheaf of rank $s$ on $D$. Then 
\[
\chi_G(\sF\otimes\pi^*\sM) - s\chi_G(\sF) =  r(\deg\sM)[k[G]].
\]
In particular, $\chi_G(\pi^*\sM) -s\chi_G(\sO_C) =  (\deg\sM)[k[G]].$
\end{cor}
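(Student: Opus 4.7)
The plan is to apply Corollary~\ref{cor: E vs F} to the pair of rank-$rs$ locally free $G$-sheaves $\sF\otimes\pi^*\sM$ and $\sF^{\oplus s}$, and then simplify the resulting degree difference to $r|G|\deg\sM$. First I would verify the hypothesis on $\sS$-twists. Since $\sM$ is locally free on $D$, any $G$-orbit in $C$ admits a $G$-invariant open neighborhood of the form $\pi^{-1}(V)$ with $V\subset D$ affine and $\sM|_V$ free of rank $s$; any trivialization $\sM|_V\cong\sO_V^{\oplus s}$ then pulls back to a trivialization of $\pi^*\sM|_{\pi^{-1}(V)}$ by $G$-invariant sections. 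Hence $\pi^*\sM$ is $G$-equivariantly isomorphic to $\sO_C^{\oplus s}$ on a neighborhood of every point, with trivial $G$-action on the factor $k^s$, so that for every $P\in C$,
\[
\bigl[((\sF\otimes\pi^*\sM)\otimes\sS)_P\bigr]_{G_P} = s\cdot\bigl[(\sF\otimes\sS)_P\bigr]_{G_P} = \bigl[(\sF^{\oplus s}\otimes\sS)_P\bigr]_{G_P}\in R_k(G_P).
\]

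Next I would invoke Corollary~\ref{cor: E vs F} and use $\chi_G(\sF^{\oplus s})=s\chi_G(\sF)$ to obtain
\[
\chi_G(\sF\otimes\pi^*\sM)-s\chi_G(\sF) = \frac{1}{|G|}\bigl(\deg(\sF\otimes\pi^*\sM)-s\deg\sF\bigr)[k[G]].
\]
The standard identity $\deg(\sA\otimes\sB)=r_\sB\deg\sA+r_\sA\deg\sB$ for locally free sheaves on a (possibly disconnected) reduced curve, which holds component by component via Riemann--Roch, then reduces the right-hand side to $\frac{r}{|G|}\deg\pi^*\sM\cdot[k[G]]$.

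What remains is the identity $\deg\pi^*\sM=|G|\deg\sM$, after which setting $\sF=\sO_C$ (so $r=1$) recovers the ``in particular'' clause. Because $G$ acts freely on a dense open subset of $C$, the inertia group $I_j$ of every irreducible component $C_j$ is trivial, and so $\pi|_{C_j}\colon C_j\to\pi(C_j)$ is finite of degree $|G_j|$, where $G_j$ is the decomposition group of $C_j$. Indexing the irreducible components of $D$ by $l$ and observing that each $D_l$ is covered by the $|G|/|G_l|$ components of $C$ lying in a single $G$-orbit over it, each contributing $|G_l|\deg(\sM|_{D_l})$ to $\deg\pi^*\sM$, the sum collapses to $|G|\deg\sM$. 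The main obstacle I anticipate is precisely this disconnected-curve bookkeeping: the assumption that $G$ acts freely on a dense open of $C$ (equivalently, vanishing inertia on every component) is essential for the counting to telescope cleanly to a factor of $|G|$, and without it one would pick up spurious $|I_j|$-factors that would prevent the regular representation from appearing with the asserted multiplicity.
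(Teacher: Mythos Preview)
Your proposal is correct and follows the same approach as the paper: apply Corollary~\ref{cor: E vs F} to the pair $\sF\otimes\pi^*\sM$ and $\sF^{\oplus s}$, verify the local $G$-triviality of $\pi^*\sM$ to match the $\sS$-twist hypothesis, and compute $\deg(\sF\otimes\pi^*\sM)-\deg(\sF^{\oplus s})=r|G|\deg\sM$. You supply considerably more detail than the paper's three-line proof, in particular the explicit local trivialization of $\pi^*\sM$ by $G$-invariant sections and the component-by-component verification that $\deg\pi^*\sM=|G|\deg\sM$ using the trivial-inertia hypothesis; these are exactly the points the paper leaves implicit.
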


\begin{proof}
  Note that $[(\sF\otimes\pi^*\sM) \otimes \sS_P]=[(\sF\otimes \sO_{C, P}^{\oplus s})\otimes \sS_P] =
  [\sF^{\oplus s}\otimes\sS_P]\in R_k(G_P)$ for any $P\in C$, and $\deg(\sF\otimes\pi^*\sM) -\deg (\sF^{\oplus s})= r|G|\deg\sM$.
Now apply Corollary~\ref{cor: E vs F}.
\end{proof}

\subsection{An equivariant Riemann--Roch theorem}\label{sub:RR}
Let the notation be as in Set-up~\ref{setup: curve}. The classical 
Riemann--Roch theorem states that for any locally free sheaf $\sE$ of rank
$r$ on $C$, we have
\begin{equation}\label{eq: usual RR}
\chi(\sE)=r\chi(\sO_C)+\deg \sE.     
\end{equation}
In the equivariant setting, we would like to have a similar formula 
\[
\chi_G(\sE)=r\chi_G(\sO_C)+\deg_G \sE 
\]
for some $\deg_G\sE\in R_k(G)_\QQ$.
Bornes \cite[Th\'eor\`eme 4.10]{Bor03} established precisely such a formula  
when $C$ is smooth connected and $r=1$.

\begin{defn} \label{defn: deg_G E} We define the \emph{$G$-equivariant degree} of $\sE$ as
\begin{equation}\label{eq: deg_G E}
\deg_{G} \sE:=\frac{1}{|G|}(\deg\sE) [k[G]] + \sum_{Z\subseteq C}\Gamma_G(\sE)_Z - r\Gamma_G(\sO_C)_Z. 
\end{equation}
where $Z$ runs through all the irreducible subvarieties of $C$. In particular, when the $G$-action on $C$ is free, then $\deg_{G} \sE=\frac{1}{|G|}(\deg\sE) [k[G]]$.
\end{defn}

Combining the definition of $\deg_G \sE$ above, the usual Riemann--Roch formula \eqref{eq: usual RR}, and Theorem~\ref{thm: CW singular}, we immediately obtain

\begin{cor} \label{cor: Equiv-RR} 
$\chi_G(\sE)=r\chi_G(\sO_C)+\deg_G(\sE).$
\end{cor}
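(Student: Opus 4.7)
The plan is to unfold the definition of $\deg_G\sE$ and match it against Theorem~\ref{thm: CW singular} applied to both $\sE$ and $\sO_C$. Since the corollary is essentially a rearrangement, the proof should be a short direct calculation; the real content has already been packed into Definition~\ref{defn: deg_G E} and Theorem~\ref{thm: CW singular}.

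Concretely, I would first apply Theorem~\ref{thm: CW singular} to $\sO_C$ (which has rank $1$) to get
\[
r\,\chi_G(\sO_C) = \frac{r}{|G|}\chi(\sO_C)[k[G]] + r\sum_{Z\subseteq C}\Gamma_G(\sO_C)_Z.
\]
Next, I would substitute the definition
\[
\deg_G\sE = \frac{1}{|G|}(\deg\sE)[k[G]] + \sum_{Z\subseteq C}\bigl(\Gamma_G(\sE)_Z - r\,\Gamma_G(\sO_C)_Z\bigr)
\]
and add these two identities. The correction terms $r\Gamma_G(\sO_C)_Z$ cancel, leaving
\[
r\,\chi_G(\sO_C) + \deg_G\sE = \frac{r\,\chi(\sO_C)+\deg\sE}{|G|}[k[G]] + \sum_{Z\subseteq C}\Gamma_G(\sE)_Z.
\]

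Finally, by the ordinary Riemann--Roch formula \eqref{eq: usual RR}, the scalar $r\,\chi(\sO_C)+\deg\sE$ equals $\chi(\sE)$, so the right-hand side becomes
\[
\frac{\chi(\sE)}{|G|}[k[G]] + \sum_{Z\subseteq C}\Gamma_G(\sE)_Z,
\]
which is exactly $\chi_G(\sE)$ by Theorem~\ref{thm: CW singular}. There is no serious obstacle here: the entire burden of the argument lies in having the ramification modules $\Gamma_G(\sE)_Z$ correctly defined so that the telescoping works, and that has already been handled upstream. The only minor point worth noting in the write-up is that all manipulations take place in $R_k(G)_\QQ$, which is a $\QQ$-vector space, so the linear combinations (in particular multiplying by $r$ and dividing by $|G|$) pose no issue.
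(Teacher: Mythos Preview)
Your proof is correct and follows exactly the approach the paper indicates: the paper simply states that the corollary is obtained ``immediately'' by combining Definition~\ref{defn: deg_G E}, the usual Riemann--Roch formula~\eqref{eq: usual RR}, and Theorem~\ref{thm: CW singular}, and your write-up spells out precisely that telescoping calculation.
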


\begin{rmk} In \cite[\S 4.4]{Bor03}, Borne defined the equivariant
  degree for invertible $G$-sheaves over a projective smooth $G$-curve $C$, with a
  different approach (using Weil divisors).
  He computed $\chi_G(\sE)$ using his equivariant Riemann--Roch 
  theorem, while we define $\deg_G \sE$ using our formula
  for $\chi_G(\sE)$. 
\end{rmk}

Let $K_G^\circ(C)$ be the Grothendieck ring of locally free
$G$-sheaves of finite rank. Thanks to the additivity of $\chi_G(\sE)$ for $\sE$, the map $\sE\mapsto \deg_G\sE$ defines a group homomorphism from $K_G^\circ(C)$ to $R_k(G)$. However, this map does not respect tensor products on $K_G^\circ(C)$ and $R_k(G)$, as can be easily seen from the following examples.
  
\begin{ex}\label{ex: not additive}
Let $C$ be a connected smooth projective curve and $G\subseteq\Aut(C)$
a finite subgroup with nontrivial $G_P$ for some 
  $P\in C$.  For any $m\in \ZZ$, consider $\sE_m:=\sO_C(m\sum_{g\in G}
  g(P))$. Then we have $[\sO_C(mP)|_P]=\theta_P^{-m}$
  (see \eqref{eq: theta_P}). Denote by $e_P=|G_P|$, and let
  $1\leq m_P\leq e_P$ be such that $m_P\equiv m \mod e_P$. Then 
(using Equality~\eqref{eq:Gamma_G(E)_P} and Lemma~\ref{lem:MkG}) 
  \begin{align*}
\Gamma_G(\sE_m)_{P}-\Gamma_G(\sO_C)_{P}  
&=\frac{1}{|G|}\Ind_{G_P}^G(\Psi_P\otimes ([\sO_C(mP)|_P]-[1_{G_P}] ))\\
& =
\frac{1}{|G|}\Ind_{G_P}^G\left(\sum_{d=0}^{e_P-1}
d\theta_P^d -d\theta_P^{d-m_P}\right)\\
 & = \frac{1}{|G|}\Ind_{G_P}^G
\left(e_P\sum_{d=e_P-m_P}^{e_P-1}\theta_P^d -m_P\sum_{d=0}^{e_P-1}\theta_P^d\right)\\
              &= -\frac{m_P}{|G|}[k[G]] + \frac{e_P}{|G|}\Ind_{G_P}^G
                \left(\sum_{d=e_P-m_P}^{e_P-1}\theta_P^d\right) 
\end{align*}
because $\sum_{d=0}^{e_P-1}\theta_P^d=k[G_P]$. Therefore 
\begin{align*}
\deg_G(\sE_m) &=\frac{m}{e_P}[k[G]]+\sum_{P'\in G\cdot P}\Gamma_G(\sE_m)_{P'}-\Gamma_G(\sO_C)_{P'} \\
& =
\frac{m-m_P}{e_P}[k[G]]+ \Ind_{G_P}^G
\left(\sum_{d=e_P-m_P}^{e_P-1}\theta_P^d\right).
\end{align*}
By looking at the multiplicities of $[1_G]$ and $[\theta_P]$ on both sides,  it is evident that  $\deg_G(\sE_m) \ne m\deg_G(\sE_1)$ for  $m>1$.
\end{ex}

\begin{ex}  Suppose $C=\bigcup_{1\leq i\leq n} C_i$ is smooth, and $T\subset C$ is a finite $G$-subset. Let $m$ be an integer, and write $m=r|\oG_P|+m_P$ with $r\in \ZZ$ and $1\leq m_P\leq |\oG_P|$. Then we have
  \begin{multline*}
 \deg_G \omega_C(T)^{\otimes m}=\frac{m\deg\omega_C(T)}{|G|}[k[G]] \\ 
  +\frac{1}{|G|} \sum_{P\in C\setminus T} \Ind_{G_P}^G\left(m_P[k[\oG_P]]-|\oG_P|\sum_{d=0}^{m_P-1}\theta_P^d\right) \\
  +\sum_{i=1}^n \frac{m|I_i|}{|G|}\Ind_{I_i}^G(\deg\omega_{C_i})\left([1_{I_i}] -\frac{1}{|I_i|}[k[I_i]]\right).     
  \end{multline*}
  In particular, if $m$ is divisible by $|G|$ and the inertia group
  $I_i$ is trivial for each component $C_i$, then $ \deg_G
  \omega_C(T)^{\otimes
    m}=\frac{m}{|G|}(\deg\omega_C(T))[k[G]]$; this
    is also a consequence of Corollary~\ref{cor: E vs F}, as the action of $G_P$ is
    then trivial on $\omega_{C}(T)^{\otimes m}|_P$ for any $P\in C$.

  \begin{proof} For $P\in T$, we have $[\omega_C(T)^{\otimes m}|_P]=[1_{G_P}]$, so 
  \[
  \Gamma_G(\omega_C(T)^{\otimes m})_P-\Gamma_G(\sO_C)_P=0.
  \]
  For $P\in C\setminus T$, we have
  $[\omega_C(T)^{\otimes m}|_P]=\theta_P^m$.  Similar computations as in Example~\ref{ex: not additive} give
  \begin{equation*}\label{eq: Gamma Omega vs O}
 \begin{split}
       \Gamma(\omega_C(T)^{\otimes m})_P - \Gamma(\sO_C)_P
     & = \frac{1}{|G|}\Ind_{G_P}^G\left(\sum_{d=0}^{ |\oG_P|-1} -d\theta_P^{d+m_P}+d\theta_P^{d}\right) \\
 &   =\frac{1}{|G|} \Ind_{G_P}^G \left(m_P[k[\oG_P]] - |\oG_P|\sum_{d=0}^{m_P-1} \theta_P^{d}\right). 
\end{split}
  \end{equation*}
For an irreducible component $C_i$ of $C$, we have  $\chi_{I_i}(\sE|_{C_i})=\chi(\sE|_{C_i})[1_{I_i}]$. By Definition~\ref{defn: ramification module sm}, setting $T_i=T\cap C_i$,
  \begin{multline*}
    \Gamma_G(\omega_C(T)^{\otimes m})_{C_i} -\Gamma_G(\sO_C)_{C_i} 
    \\ 
=    \frac{|I_i|}{|G|}\Ind_{I_i}^G(\chi(\omega_{C_i}(T_i)^{\otimes m})-\chi(\sO_{C_i}))([1_{I_i}] -\frac{1}{|I_i|}
 [k[I_i]])\\
  = \frac{m|I_i|}{|G|}\Ind_{I_i}^G (\deg\omega_{C_i}(T_i))\left([1_{I_i}] -\frac{1}{|I_i|}[k[I_i]]\right).      
  \end{multline*}
 Summing up, we obtain the desired formula for $\deg_G(\omega_C(T)^{\otimes m})$ by the definition given in \eqref{eq: deg_G E}.
  \end{proof}
\end{ex}

\section{The case of nodal curves}\label{sec: nodal}
In this section, we keep the notation of Set-ups~\ref{setup: curve}. Additionally, we assume that $C$ 
has at worst nodes as singularities.

\begin{setup}\label{setup: nodal} Let $P\in S$. Because of the tameness of the $G$-action, $|G_P|$ is nonzero in $k$, and we may find a $G_P$-equivariant isomorphism of $k$-algebras between the formal completion $\widehat\sO_{C,P}$ and $k[[x,y]]/(x,y)$, where $G_P$ acts linearly on $k[[x,y]]/(x,y)$; see \cite{Car57}.

Let $\{ \tP_1, \tP_2\}=\nu^{-1}(P)\subset \tC$  denote the preimages. 
  The canonical $k$-linear map
\[T_{\tC, \tP_1}\oplus T_{\tC, \tP_2}\longrightarrow T_{C, P}\] 
of Zariski tangent spaces 
is an isomorphism.   The subspaces $T_{\tC, \tP_1}, T_{\tC, \tP_2}$  of
$T_{C, P}$ are referred to as the
\emph{local branches} of $C$ at $P$. 

The group $G$ acts on $\tC$ and we
have a canonical exact sequence of $G$-equivariant sheaves 
\begin{equation}
  \label{eq:OC-OtC}
  0 \to \sO_C \to \nu_* \sO_{\tC} \to \sS \to 0,   
\end{equation}
where $\sS$ is supported in the  singular locus $S$ of $C$, with
$\sS_P\simeq k$ for all $P\in S$ (because $P$ is a node). 
We define two subsets of $S$: 
\begin{equation}\label{eq: subsets S}
  \begin{split}
& S_1 = \{P\in S \mid \text{$G_P$ does not exchange the local branches
  at $P\in C$} \}\\
& S_2 = \{P\in S \mid \text{$G_P$ exchanges the local branches at $P\in C$} \}
\end{split}   
\end{equation}
Then $S$ is the disjoint union of $S_1$ and $S_2$. For any $P\in C$,
the condition $P\in S_1$ is equivalent to the condition that $\pi(P)$ is singular in
$D:=C/G$, where $\pi\colon C\rightarrow D$ denotes the quotient map.

For $P\in S_2$, let $G_{\tP}$ be the subgroup of index $2$ in $G_P$ fixing
  each local branch at $P$. It is also the stabilizer at any point
  $\tP\in \nu^{-1}(P)$. We denote by
  \begin{equation}\label{eq: chi_P}
    \chi_P=[k[G_P/G_{\tP}]]-[1_{G_P}] \in R_k(G_P).
  \end{equation}
This is the class of the $G_P$-representation of degree $1$ given by the character  
$G_P\to G_P/G_{\tP}\simeq \{ \pm 1 \}\hookrightarrow k^*$
\begin{rmk}
By the tameness of the $G$-action, if $S_2\neq \emptyset$, then $\chara(k)\neq 2$.
\end{rmk}
 
\begin{lem}\label{lem: S_P} We have in $R_k(G_P)$:
  \[ 
 [\omega_C|_P] = [\sS|_P]=
  \begin{cases}
    [1_{G_P}] &  \text{if } P\in S_1, \\
   \chi_P   & \text{if } P \in S_2.
  \end{cases}
  \]
\end{lem}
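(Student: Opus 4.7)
The plan is to compute the one-dimensional $G_P$-representations $\sS|_P$ and $\omega_C|_P$ from local models at the node $P$. For $\sS|_P$, I would use the exact sequence \eqref{eq:OC-OtC} together with the identification $(\nu_*\sO_{\tC})_P = \sO_{\tC,\tP_1}\oplus\sO_{\tC,\tP_2}$: since $\sO_{C,P}$ is cut out inside the direct sum by the condition $f_1(\tP_1) = f_2(\tP_2)$, the map $(f_1, f_2) \mapsto f_1(\tP_1) - f_2(\tP_2)$ identifies $\sS_P$ with $k$, and $G_P$ acts compatibly on this quotient. An element of $G_P$ fixing each branch acts trivially on each fiber $\sO_{\tC,\tP_i}/\gm_{\tP_i} \cong k$, hence trivially on $\sS_P$; an element swapping $\tP_1$ and $\tP_2$ interchanges the two summands and so negates the difference. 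This gives $[\sS|_P] = [1_{G_P}]$ for $P \in S_1$ and $[\sS|_P] = \chi_P$ for $P \in S_2$.

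For $\omega_C|_P$ I would appeal to the standard local description of the dualizing sheaf at a node: writing $\widehat{\sO}_{C,P} \cong k[[u,v]]/(uv)$ with $u, v$ the local parameters of the two branches, the module $\omega_{C,P}$ is generated by the meromorphic form $\eta = du/u = -dv/v$. For $g \in G_P$ preserving both branches, with $g(u) = au + O(u^2)$ and $g(v) = bv + O(v^2)$ where $a, b \in k^*$, one computes $g(\eta) = d(g(u))/g(u) \equiv du/u = \eta \pmod{\gm}$, so $g$ acts trivially on the fiber. For $g$ swapping the branches, with $g(u) = cv + O(v^2)$, the same calculation yields $g(\eta) \equiv dv/v = -\eta \pmod{\gm}$, so $g$ acts by $-1$. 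These recover the same classes as for $\sS|_P$ in each case.

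Finally, when $\chara k = 2$, the relation $-1 = 1$ forces the sign character on a one-dimensional $k$-space to be trivial, so from the definition \eqref{eq: chi_P} a short check (using that $[k[G_P/G_{\tP}]] = 2[1_{G_P}]$ as composition factors in characteristic $2$) yields $\chi_P = [1_{G_P}]$ in $R_k(G_P)$, handling the last assertion. The main technical care is in getting the signs right for $\omega_C|_P$: this rests on the convention $du/u = -dv/v$ built into the dualizing form at a node, so the principal obstacle is pinning down the local generator of $\omega_C$ rather than any representation-theoretic manipulation.
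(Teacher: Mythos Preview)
Your proposal is correct and follows essentially the same approach as the paper: both compute $[\sS|_P]$ from the exact sequence $0\to k(P)\to k(\tP_1)\oplus k(\tP_2)\to \sS|_P\to 0$, and your explicit treatment of $\omega_C|_P$ via the generator $du/u=-dv/v$ is exactly what the paper has in mind when it says the argument is ``similar'' (cf.\ the computation in the proof of Lemma~\ref{lem:OtOoS}(4)). The only minor divergence is in characteristic $2$: the paper invokes Brauer characters to conclude $[\sS|_P]=[1_{G_P}]$, whereas you argue directly that the sign character collapses to the trivial one---your route is more elementary and equally valid, since the paper itself notes parenthetically after \eqref{eq: chi_P} that $\chi_P$ is trivial when $\chara(k)=2$.
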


\begin{proof}
Let $\{ \tP_1, \tP_2 \}=\nu^{-1}(P)$. We have the exact sequence
\[
0 \to k(P) \to k(\tP_1) \oplus k(\tP_2) \to \sS|_P \to 0. 
\] 
If $P\in S_1$, then $[k(\tP_j)]=[k(P)]$ as $k[G_P]$-modules, thus 
$[\sS|_P]=[k(P)]=[1_{G_P}]$.
If $P\in S_2$. Then 
$[k(\tP_1) \oplus k(\tP_2)]=k[G_P/G_{\tP}]$, thus 
$[\sS|_P]=k[G_P/G_{\tP}] -[1_{G_P}] $.


The proof for $[\omega_C|_P]$ is similar and hence is omitted.
\end{proof}   
\end{setup}

\subsection{Computation of \texorpdfstring{$\Gamma_G(\sE)_P$}{} at nodes with smoothable  
  actions} \label{sub:compute_smoothable}

Keep the notation as in Set-ups~\ref{setup: curve} and \ref{setup: nodal}. Recall that $G$ acts tamely on $C$. In this subsection, we assume that the action of $G$ is free on an open dense subset of $C$. For a singular point $P$ of $C$, let $\{ \tP_1, \tP_2\}=\nu^{-1}(P)\subset \tC$, and $x_i\in T_{\tP_i}\tC$ a nonzero element, $i=1,2$. Since the action of $G$ is free on an open dense subset of $C$, we have $G_{\tP_1}=G_{\tP_2}$ and they are cyclic.  
\begin{defn} \label{defn: smoothable} 
The action of $G_P$ is called \emph{smoothable} at a singular point $P\in C$ as above if for any $g\in G_P$, the pair of cotangent vectors $(g^*x_1, g^*x_2)$ is either $(\xi x_1, \xi^{-1} x_2) $ or $(\xi x_2, \xi^{-1} x_1)$ for some $\xi\in k$, depending $g$ preserves the local branches of $C$ at $P$ or not. We say that the action of $G$ on $C$ is 
\emph{smoothable} if the above condition holds for all $P\in S$. 
\end{defn}

\begin{rmk}
By \cite[Section~2.1]{Liu12}, the $G$-curve $C$ is smoothable if and only if there exists a $G$-surface $\sC$ and a flat $G$-equivariant
  morphism $f\colon \sC \to T$ to an integral smooth curve $T$ (with
  trivial action of $G$) such that the following holds: 
\begin{itemize}
\item the generic fiber of $\sC\to T$ is smooth;
\item there exist $t_0\in T$ and a $G$-equivariant isomorphism between the
   curves $\sC_{t_0}$ and $C$. 
\end{itemize}    
\end{rmk}


We expect that its ramification module of a smoothable $G$-curve resembles that of a smooth $G$-curve.
\begin{prop}\label{prop:GammaP_ad}
Assume that the action of $G$ on $C$ is smoothable. Let $P\in S$, and pick a point $\tP\in \nu^{-1}(P)=\{\tP_1,\tP_2\}$.
\begin{enumerate}
  \item We have 
\[
\sum_{i=1,2}\Gamma_{G}(\nu^*\sE)_{\tP_i}
=  \frac{|G_{\tP}|}{|G|}\Ind_{G_{\tP}}^G[\sE|_P]
-\frac{r}{|G|} [k[G]].
\]
\item If  $P\in S_1$, then  $\Gamma_G(\sE)_P=0$. 
\item If $P\in S_2$ then
    \[
\Gamma_G(\sE)_P= \frac{|G_{P}|}{2|G|}
\Ind_{G_P}^G ([\sE|_P]\otimes([1_{G_P}]-\chi_P)). 
\]
  \end{enumerate}
\end{prop}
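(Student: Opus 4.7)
The plan is to unpack the definition of $\Gamma_G(\sE)_P$ at the singular point $P$ and reduce everything to an algebraic computation in $R_k(G_{\tP})_\QQ$ that uses the smoothability hypothesis in an essential way. By Definition~\ref{defn: ramification module singular},
\[
\Gamma_G(\sE)_P \;=\; \sum_{j=1,2}\Gamma_G(\nu^*\sE)_{\tP_j}
\;+\;\tfrac{|G_P|}{|G|}\Ind_{G_P}^G\!\bigl(\tfrac{r}{|G_P|}[k[G_P]]-[(\sE\otimes\sS)_P]\bigr),
\]
so parts (2) and (3) will follow once (1) is established and $[(\sE\otimes\sS)_P]$ is identified via Lemma~\ref{lem: S_P}. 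The key observation for (1) is that, by the Lemma preceding the proposition, the components $\tC_{i_1},\tC_{i_2}$ through $P$ share a common inertia group $I_P$, and so $G_{\tP_1}=G_{\tP_2}=G_{\tP}$ and $\oG_{\tP_1}\cong\oG_{\tP_2}$; tameness makes this common quotient $H:=G_{\tP}/I_P$ a cyclic group of order $e$ prime to $\chara(k)$.

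The main calculation is then the sum $\Psi_{\tP_1}+\Psi_{\tP_2}$ inside $R_k(H)$. Smoothability gives the identity $\theta_{\tP_2}=\theta_{\tP_1}^{-1}$; substituting $d\mapsto e-d$ in the second sum yields, after a short manipulation,
\[
\sum_{d=0}^{e-1}d\theta_{\tP_1}^d \;+\;\sum_{d=0}^{e-1}d\theta_{\tP_2}^d \;=\; e\bigl([k[H]]-[1_H]\bigr),
\]
and hence $\Psi_{\tP_1}+\Psi_{\tP_2}=e[1_H]-[k[H]]$. Lemma~\ref{lem:ind_res}(2) lets me inflate to $R_k(G_{\tP})$, where this becomes $e[1_{G_{\tP}}]-\Ind_{I_P}^{G_{\tP}}[1_{I_P}]$. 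After tensoring by $[\sE|_P]=[\nu^*\sE|_{\tP_j}]$, applying the projection formula, and inducing to $G$ with the transitivity $\Ind_{G_{\tP}}^G\Ind_{I_P}^{G_{\tP}}=\Ind_{I_P}^G$, the common factor $|I_P|/|G|$ in the two ramification modules combines with $e|I_P|=|G_{\tP}|$ to give the formula claimed in (1).

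For part (2), where $P\in S_1$ so that $G_P=G_{\tP}$ and $[\sS|_P]=[1_{G_P}]$ by Lemma~\ref{lem: S_P}, one has $[(\sE\otimes\sS)_P]=[\sE|_P]$; substituting the formula from (1) shows that the two contributions cancel and $\Gamma_G(\sE)_P=0$. For part (3), where $P\in S_2$, one has $[\sS|_P]=\chi_P$ and $G_{\tP}$ is normal of index $2$ in $G_P$, so Lemma~\ref{lem:ind_res}(3) gives
\[
\Ind_{G_{\tP}}^{G_P}\Res_{G_{\tP}}^{G_P}[\sE|_P] \;=\; [k[G_P/G_{\tP}]]\otimes[\sE|_P] \;=\; \bigl([1_{G_P}]+\chi_P\bigr)\otimes[\sE|_P].
\]
Using this to rewrite $\Ind_{G_{\tP}}^G[\sE|_P]$ in (1) as $\Ind_{G_P}^G[\sE|_P]+\Ind_{G_P}^G(\chi_P\otimes[\sE|_P])$, combining with the correction term, and simplifying with $|G_P|=2|G_{\tP}|$, produces the stated expression $\frac{|G_P|}{2|G|}\Ind_{G_P}^G([\sE|_P]\otimes([1_{G_P}]-\chi_P))$; in characteristic $2$ the identification $\chi_P=[1_{G_P}]$ from Lemma~\ref{lem: S_P} makes this vanish.

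The main obstacle is the combinatorial identity for $\Psi_{\tP_1}+\Psi_{\tP_2}$: it is elementary but relies critically on the smoothability condition $\theta_{\tP_2}=\theta_{\tP_1}^{-1}$, without which no such cancellation occurs. The rest is careful bookkeeping of inflation, restriction, induction, and the projection formula, where one must keep track of the common inertia $I_P$ and the distinction between $G_{\tP}\subset G_P$ in the two cases $P\in S_1$ and $P\in S_2$.
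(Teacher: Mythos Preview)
Your approach is essentially the paper's: both rest on the identity $\Psi_{\tP_1}+\Psi_{\tP_2}=e\,[1]-[k[\cdot]]$ obtained from the smoothability condition $\theta_{\tP_2}=\theta_{\tP_1}^{-1}$, and both derive (2)--(3) from (1) together with Lemma~\ref{lem: S_P} and, for $P\in S_2$, Lemma~\ref{lem:ind_res}(3).

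The one place you diverge is your explicit tracking of the componentwise inertia $I_P$. The paper tacitly identifies $G_{\tP}$ with $\oG_{\tP}$ throughout its proof (writing $|G_{\tP}|$ where Definition~\ref{defn: ramification module sm} has $|\oG_{\tP}|$, and taking the prefactor $1/|G|$ rather than $|I_P|/|G|$), then dispatches the second term in one stroke via Lemma~\ref{lem:MkG}: $[k[G_{\tP}]]\otimes[\sE|_P]=r[k[G_{\tP}]]$. You instead inflate $[k[H]]$ to $\Ind_{I_P}^{G_{\tP}}[1_{I_P}]$ and invoke the projection formula; but that route yields
\[
\frac{|I_P|}{|G|}\,\Ind_{I_P}^{G}\bigl(\Res_{I_P}^{G_{\tP}}[\sE|_P]\bigr),
\]
which equals $\tfrac{r}{|G|}[k[G]]$ \emph{only} when $I_P=\{1\}$. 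So your final clause ``combines $\ldots$ to give the formula claimed in (1)'' is unjustified in general. Indeed, identity (1) itself fails for nontrivial $I_P$: take $G=I_P$ acting trivially on $C$ and $\sE=\sO_C$; then $\Psi_{\tP_j}=0$ so the left side vanishes, while the right side is $[1_G]-\tfrac{1}{|G|}[k[G]]\neq 0$. Under the paper's implicit hypothesis $I_P=\{1\}$, your inflation is vacuous and the projection formula collapses to Lemma~\ref{lem:MkG}, so the two arguments coincide; you should either adopt that hypothesis explicitly or note that (1) requires correction otherwise.
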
 

\begin{proof} (1) The smoothability condition implies that $G_{\tP_1}=G_{\tP_2}=G_{\tP}$, and
  $\theta_{\tP_1}=\theta_{\tP_2}^{-1}=\theta_{\tP_2}^{|G_{\tP}|-1}$, so 
\[
\sum_{i=1,2}\sum_{d=0}^{|G_{\tP}|-1} d\theta_{\tP_i}^d=\sum_{d=0}^{|G_{\tP}|-1} d\theta_{\tP}^d + d\theta_{\tP}^{|G_{\tP}|-d} = |G_{\tP}|\sum_{d=1}^{|G_{\tP}|-1}\theta_{\tP}^d = |G_{\tP}|([k[G_{\tP}]]-[1_{G_{\tP}}])
\]
Note that $[(\nu^*\sE)|_{\tP_i}] = [\sE|_P]\in R_k(G_{\tP_i})$, where the latter $k[G_P]$-module becomes a $k[G_{\tP_i}]$-module via the inclusion $G_{\tP_i}\hookrightarrow G_P$.
Therefore, by Definition~\ref{defn: ramification module sm} and Lemma~\ref{lem:MkG},
  \begin{align*}
\sum_{i=1,2}\Gamma_{G}(\nu^*\sE)_{\tP_i} &= -\frac{1}{|G|}\sum_{i=1,2}\Ind_{G_{\tP}}^G\sum_{d=0}^{|G_{\tP}|-1} d\theta_{\tP_i}^d\otimes [\sE|_P]\otimes\left([1_{G_{\tP}}] - \frac{1}{|G_{\tP}|}[k[G_{\tP}]]\right)\\
&=\frac{1}{|G|}\Ind_{G_{\tP}}^G|G_{\tP}|([1_{G_{\tP}}]-[k[G_{\tP}]])\otimes [\sE|_P]\otimes\left([1_{G_{\tP}}] - \frac{1}{|G_{\tP}|}[k[G_{\tP}]]\right) \\
&= \frac{1}{|G|}\Ind_{G_{\tP}}^G\left(|G_{\tP}|[1_{G_{\tP}}] - [k[G_{\tP}]]\right)\otimes [\sE|_P]
\end{align*}
Since $[k[G_{\tP}]] \otimes [\sE|_P]= r[k[G_{\tP}]]$ by Lemma~\ref{lem:MkG}, and $\Ind_{G_{\tP}}^G[k[G_{\tP}]] = [k[G]]$, the assertion of (1) follows.

\medskip

(2)-(3) By Definition~\ref{defn: ramification module singular}, we have
\begin{align*}
\Gamma_G(\sE)_P & =\sum_{\tP\in \nu^{-1}(P)}\Gamma_G(\nu^*\sE)_{\tP} +
\frac{|G_P|}{|G|}\Ind_{G_P}^G\left(r[k[G_P]]-[(\sE\otimes \sS)|_P] \right) \\
&=\frac{|G_{\tP}|}{|G|}\Ind_{G_{\tP}}^G[\sE|_P]
-\frac{r}{|G|} [k[G]]+
\frac{|G_P|}{|G|}\Ind_{G_P}^G\left(\frac{r}{|G_P|}[k[G_P]]-[(\sE\otimes \sS)|_P]\right)                               \\
&=\frac{|G_{\tP}|}{|G|}\Ind_{G_{\tP}}^G[\sE|_P] -\frac{|G_{P}|}{|G|}\Ind_{G_P}^G[(\sE\otimes \sS)|_P]
\end{align*}
If $P\in S_1$, then $G_{\tP}=G_P$ and $[\sS|_P]=[1_{G_P}]$
(Lemma~\ref{lem: S_P}). This implies that $\Gamma_G(\sE)_P=0$.

Suppose now $P\in S_2$. Then $|G_{\tP}|=|G_P|/2$, $[\nu^*\sE|_{\tP}]=\Res^{G_P}_{G_{\tP}} [\sE|_P]$, and 
$[\sS|_P]=\chi_P$  (Lemma~\ref{lem: S_P}).
Applying Lemma~\ref{lem:ind_res}(3) to 
$\Ind_{G_{\tP}}^{G_P}[\nu^*\sE|_{\tP}]$ we get
\[
\Ind_{G_{\tP}}^{G_P} [\nu^*\sE|_{\tP}]=[k[G_P/G_{\tP}]] \otimes [\sE|_P],
\]
hence 
\[
\Gamma_G(\sE)_P= \frac{|G_{P}|}{2|G|}
\Ind_{G_P}^G([\sE|_P]\otimes ([k[G_P/G_{\tP}]]-2\chi_P)). 
\]
By definition, $\chi_P=[k[G_P/G_{\tP}]]-[1_{G_P}]$. This achieves the proof. 
\end{proof}

\subsection{Lower bound on \texorpdfstring{$\chi_G(\omega_C\otimes \sA)$}{} for ample
  sheaves \texorpdfstring{$\sA$}{}}

Keep the notation in Set-ups~\ref{setup: curve} and \ref{setup:
  nodal}. So $C$ is a nodal curve with $G$ acting tamely on  $C$.
In this subsection, we write $\sE=\omega_C\otimes\sA$ and try to bound $\chi_G(\sE)$ from below in terms of the quotient curve $D$ and $\deg\sA|_{C_i}$ for each component $C_i$. Under the conditions that the inertia group $I_i$ for of each irreducible component $C_i$ is trivial, and that $\sA$ is sufficiently ample, one can find some positive rational number $a$ such that $\chi_G(\sE)\geq a\cdot [k[G]]$ in $R_k(G)_\QQ$; see Corollary~\ref{cor: H0 omega tC tensor A vs k[G]}.

\begin{lem} \label{lem:chi_GsL} 
Let $\sE$ be a locally free $G$-sheaf on $C$ and set $\sA = \sE\otimes \omega_C^{-1}$. Then we have
\[ \chi_G(\sE)=\chi_G(\omega_{\tC}\otimes\nu^*\sA)+[H^0(S, \sE|_S)]. 
\]
 If $\sA$ is ample, then 
\[ \chi_G(\sE)=[H^0(C, \sE)],\,\, \chi_G(\omega_{\tC}\otimes\nu^*\sA)
  = [H^0(\tC, \omega_{\tC}\otimes \nu^*\sA)] \]
and $\chi_G(\nu^*\sE)=[H^0(\tC, \nu^*\sE)]$. 
\end{lem}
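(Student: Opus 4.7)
The plan is to deduce the first identity by combining two $G$-equivariant short exact sequences, one on $C$ and one on $\tC$, both expressing $\chi_G(\nu^*\sE)$. Writing $\tS:=\nu^{-1}(S)$, the key geometric input is the standard canonical $G$-equivariant isomorphism $\nu^*\omega_C\cong \omega_{\tC}(\tS)$ on the nodal curve $C$, which is checked locally at each $\tP$ by describing $\omega_C$ via the generators $dt_i/t_i$ at the preimages of a node.

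First I would apply the normalization sequence \eqref{eq:exactsnE} to $\sE$ to get
\[
\chi_G(\sE) = \chi_G(\nu^*\sE) - [H^0(S,\sE\otimes\sS)],
\]
using that $\sE\otimes\sS$ is supported on the finite set $S$. Next, the identification $\nu^*\sE\cong \omega_{\tC}(\tS)\otimes\nu^*\sA$ together with the subsheaf inclusion $\omega_{\tC}\otimes\nu^*\sA = \nu^*\sE(-\tS)\hookrightarrow\nu^*\sE$ yields
\[
0\to \omega_{\tC}\otimes\nu^*\sA \to \nu^*\sE \to \nu^*\sE|_{\tS} \to 0,
\]
hence $\chi_G(\omega_{\tC}\otimes\nu^*\sA)=\chi_G(\nu^*\sE) - [H^0(\tS,\nu^*\sE|_{\tS})]$. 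Subtracting the two expressions for $\chi_G(\nu^*\sE)$ reduces the lemma to the identity
\begin{equation}\label{eq:key_id}
[H^0(\tS,\nu^*\sE|_{\tS})] = [H^0(S,\sE|_S)] + [H^0(S,\sE\otimes\sS)]
\end{equation}
in $R_k(G)$.

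To prove \eqref{eq:key_id}, I would first extract from \eqref{eq:OC-OtC} a $G$-equivariant short exact sequence on the finite scheme $S$,
\[
0\to \sO_S \to \nu_*\sO_{\tS} \to \sS \to 0,
\]
whose exactness can be verified stalkwise at each $P\in S$ (reducing to the exact sequence $0\to k\xrightarrow{\mathrm{diag}}k\oplus k\to k\to 0$). Tensoring with the locally free sheaf $\sE|_S$ over $\sO_S$ preserves exactness, and the projection formula gives $\nu_*\sO_{\tS}\otimes_{\sO_S}\sE|_S\cong \nu_*(\nu^*\sE|_{\tS})$, so one obtains
\[
0\to \sE|_S \to \nu_*(\nu^*\sE|_{\tS}) \to \sE\otimes\sS \to 0.
\]
Taking $H^0$ (which is exact on a $0$-dimensional scheme) then proves \eqref{eq:key_id}. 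The main obstacle is producing the right $G$-equivariant sheaf sequence on $S$; once this is in place, the identifications of $[\sS|_P]$ from Lemma~\ref{lem: S_P} are absorbed automatically into the derivation.

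For the second assertion, the ampleness hypothesis ensures $\sA^{-1}$ has negative degree on every irreducible component of $C$, so $H^0(C,\sA^{-1})=0$; Serre duality on the Gorenstein curve $C$ then gives $H^1(C,\omega_C\otimes\sA)\cong H^0(C,\sA^{-1})^\vee=0$, so $\chi_G(\sE)=[H^0(C,\sE)]$. The pullback $\nu^*\sA$ is ample on $\tC$ because its degree on each component of $\tC$ agrees with the degree of $\sA$ on the corresponding component of $C$; Serre duality on the smooth curve $\tC$ then yields $H^1(\tC,\omega_{\tC}\otimes\nu^*\sA)=0$, and since $\nu^*\sE=\omega_{\tC}\otimes\nu^*\sA(\tS)$ with $\nu^*\sA(\tS)$ still ample, the same argument gives $H^1(\tC,\nu^*\sE)=0$.
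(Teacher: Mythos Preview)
Your proof is correct, but it takes a longer route than the paper's. The paper works directly with the dualizing sequence
\[
0\to \nu_*\omega_{\tC}\to \omega_C\to \omega_C|_S\to 0
\]
(which is \eqref{eq: omegaC-omegatC}); tensoring with the locally free sheaf $\sA$ and using the projection formula $(\nu_*\omega_{\tC})\otimes\sA\cong \nu_*(\omega_{\tC}\otimes\nu^*\sA)$ gives the first identity in a single step. By contrast, you pass through $\chi_G(\nu^*\sE)$ twice --- once via the normalization sequence \eqref{eq:exactsnE} and once via the twisting sequence on $\tC$ --- and are then forced to prove the auxiliary identity $[H^0(\tS,\nu^*\sE|_{\tS})]=[H^0(S,\sE|_S)]+[H^0(S,\sE\otimes\sS)]$ on the singular locus. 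Your detour does work, and it has the minor advantage of making explicit how the three skyscraper contributions fit together; but the paper's argument avoids this bookkeeping entirely by choosing the exact sequence that already has $\sE|_S$ as its cokernel. For the vanishing statements under the ampleness hypothesis, your argument is essentially the same as the paper's.
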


\begin{proof} Consider the exact sequence (\cite[Lemma 10.3.12]{Liu06})
\begin{equation}\label{eq: omegaC-omegatC}
    0\rightarrow \nu_*\omega_{\tC} \rightarrow \omega_C \rightarrow \omega_C|_S \rightarrow 0.
\end{equation}
Tensoring \eqref{eq: omegaC-omegatC} with $\sA$,
we obtain the $G$-equivariant exact sequence 
\begin{equation}\label{eq: omegaC-omegatC tensor E}
    0\rightarrow (\nu_*\omega_{\tC})\otimes \sA \rightarrow \sE \rightarrow  \sE|_S \rightarrow 0.
  \end{equation}
Note that $\chi_G((\nu_*\omega_{\tC})\otimes \sA) =\chi_G(\omega_{\tC}\otimes \nu^*\sA)$. \eqref{eq: omegaC-omegatC tensor E} implies that the first desired equality in $R_k(G)$ by taking the long
cohomology sequence of \eqref{eq: omegaC-omegatC tensor E}.

Suppose that $\sA$ is ample. Since $\nu$ is a finite morphism, $\nu^*\sE$ is also ample. By \cite{Har71}, we have the vanishing of $H^0(C, \sA^{\vee})$ and $H^0(\tC, (\nu^*\sA)^\vee)$.
By Serre's duality, we have $H^1(C, \sE)\simeq H^0(C, \sA^{\vee})$, and $H^1(C, (\nu_*\omega_{\tC})\otimes \sA)\simeq H^1(\tC,  
\omega_{\tC}\otimes \nu^*\sA)=H^0(\tC, (\nu^*\sA)^{\vee})$, and they vanish.
\end{proof}

\begin{lem}\label{lem: chi_G omega + A} 
  Suppose that $C$ is smooth,  connected and that $G\subseteq \Aut(C)$ is a tame finite subgroup. Let $\sE$ be a locally free $G$-sheaf of rank $r$ on $C$, and $\sA:=\omega_C^{-1}\otimes \sE$. 
  Then  
    \[
\chi_G(\sE)\geq \left(r\chi(\omega_D)+\frac{1}{|G|}\deg \sA\right)[k[G]].  
    \]
  \end{lem}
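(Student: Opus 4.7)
The plan is to apply the Chevalley--Weil formula of Theorem~\ref{thm: sm} and use Riemann--Hurwitz to isolate the asserted lower bound. Since $C$ is smooth connected and $G\subseteq\Aut(C)$ is faithful, the inertia group of $C$ itself is trivial, so the component contribution vanishes and Theorem~\ref{thm: sm} reads
\[
\chi_G(\sE)=\frac{1}{|G|}\chi(\sE)[k[G]]+\sum_{P\in C}\Gamma_G(\sE)_P.
\]
From $\sE=\omega_C\otimes\sA$ we get $\deg\sE=r\deg\omega_C+\deg\sA$, hence $\chi(\sE)=r\chi(\omega_C)+\deg\sA$; Riemann--Hurwitz yields $\chi(\omega_C)=|G|\chi(\omega_D)+\tfrac{1}{2}\deg R_\pi$ with $R_\pi=\sum_{P}(|G_P|-1)[P]$. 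The ``regular part'' of $\Gamma_G(\sE)_P$, namely $\tfrac{1}{|G|}\Ind_{G_P}^G\bigl(\tfrac{|G_P|-1}{2}[k[G_P]]\otimes[\sE|_P]\bigr)$, simplifies by Lemma~\ref{lem:MkG} and $\Ind_{G_P}^G[k[G_P]]=[k[G]]$ to $\tfrac{r(|G_P|-1)}{2|G|}[k[G]]$. Assembling everything,
\[
\chi_G(\sE)-\left(r\chi(\omega_D)+\tfrac{1}{|G|}\deg\sA\right)[k[G]]=\frac{1}{|G|}\sum_{P\in C}\left(r(|G_P|-1)[k[G]]-\Ind_{G_P}^G\sum_{d=0}^{|G_P|-1}d\,\theta_P^d\otimes[\sE|_P]\right),
\]
so the proof reduces to the termwise non-negativity of the summands on the right.

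For each $P$, tameness together with the faithful action on the $1$-dimensional cotangent space $T_P^*C$ forces $G_P\hookrightarrow k^*$ via $\theta_P$; in particular $G_P$ is cyclic and $\theta_P$ generates its character group, so $[k[G_P]]=\sum_{d=0}^{|G_P|-1}\theta_P^d$. Each coefficient $d$ is at most $|G_P|-1$, hence $\sum_{d=0}^{|G_P|-1}d\,\theta_P^d\leq(|G_P|-1)[k[G_P]]$ in $R_k(G_P)^+_\QQ$. Tensoring with the honest $G_P$-module $[\sE|_P]$ preserves this inequality, and by Lemma~\ref{lem:MkG} the right-hand side becomes $r(|G_P|-1)[k[G_P]]$; finally $\Ind_{G_P}^G$ preserves $R_k(\cdot)^+_\QQ$ because it sends honest representations to honest representations, giving $\Ind_{G_P}^G\sum_d d\,\theta_P^d\otimes[\sE|_P]\leq r(|G_P|-1)[k[G]]$. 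Summing over $P$ establishes the lower bound.

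The ``in particular'' is then immediate: when $\deg\sA>-r|G|\chi(\omega_D)$ the coefficient $r\chi(\omega_D)+\tfrac{1}{|G|}\deg\sA$ is strictly positive, so $\chi_G(\sE)$ dominates a positive rational multiple of $[k[G]]$. The only delicate point in the argument is the identity $[k[G_P]]=\sum_{d=0}^{|G_P|-1}\theta_P^d$, which rests on $G_P$ being cyclic with faithful character $\theta_P$; both facts are essentially tameness in disguise, and without them the coefficient-wise bound $d\leq|G_P|-1$ would not translate into a comparison with the regular representation.
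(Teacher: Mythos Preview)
Your proof is correct and follows essentially the same route as the paper's: apply Theorem~\ref{thm: sm}, use Riemann--Roch and Riemann--Hurwitz to extract the term $\bigl(r\chi(\omega_D)+\tfrac{1}{|G|}\deg\sA\bigr)[k[G]]$, and then show that the remaining sum over $P$ is $\geq 0$. The only cosmetic difference is in the last step: the paper rewrites each summand directly as $\Ind_{G_P}^G\sum_{d=0}^{|G_P|-1}(|G_P|-1-d)\,\theta_P^d\otimes[\sE|_P]$, which is visibly non-negative, whereas you phrase the same observation as the inequality $\sum_d d\,\theta_P^d\leq(|G_P|-1)[k[G_P]]$ and then tensor and induce.
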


\begin{proof} Let $R$ be the ramification divisor of the quotient map $\pi\colon C\to
  D$.  By the Riemann--Hurwitz formula and the tameness hypothesis on the $G$-action on $C$, we have 
  \[ \chi(\omega_C)=|G|\chi(\omega_D)+\frac{1}{2}\deg R.\] By the Riemann--Roch theorem, we have
  \[
  \chi(\sE) = r\chi(\sO_C) + \deg\sE =  r\chi(\sO_C) + r\deg\omega_C + \deg\sA = r\chi(\omega_C)+\deg\sA
  \]
By Theorem~\ref{thm: sm}, 
  \begin{align*}
  \chi_G(\sE) & = \frac{1}{|G|}\chi(\sE)[k[G]]+\sum_{P\in C}\Gamma(\sE)_P \\
    & = \left(r\chi(\omega_D)+\frac{1}{|G|}\deg \sA\right)[k[G]] + \frac{r}{2|G|}(\deg R) [k[G]] 
      +\sum_{P\in C}\Gamma(\sE)_P.  
  \end{align*} 
  As $\deg R = \sum_{P\in C} (|G_P|-1)$, and $\displaystyle r[k[G_P]]=[k[G_P]]\otimes [\sE|_P]= \sum_{d=0}^{|G_P|-1}
\theta_P^d\otimes [\sE|_P]$, we have by Definition~\ref{defn: ramification module sm}
\begin{align*}
   &\frac{r}{2|G|}(\deg R) [k[G]]   +\sum_{P\in C}\Gamma(\sE)_P  \\
   & =\frac{1}{|G|}\sum_{P\in C} 
\Ind_{G_P}^G\left(r(|G_P|-1)[G_P]]
- \left( \sum_{d=0}^{|G_P|-1} d\theta_P^d\otimes [\sE|_P]\right)\right) \\
&= \frac{1}{|G|} \sum_{d=0}^{|G_P|-1}(|G_P|-1-d)\theta_P^d \otimes [\sE|_P]\\
&  \geq 0
\end{align*}
Thus $\chi_G(\sE)\geq \left(r\chi(\omega_D)+\frac{1}{|G|}\deg
  \sA\right)[k[G]]$, as desired. 
\end{proof}

\begin{cor}\label{cor: g(C/G)>1}
Let $C$ be a smooth connected projective curve over $k$, and
$G\subseteq \Aut(C)$ a finite tame subgroup. If $g(C/G)\geq 2$, then
\[ [H^0(C, \omega_C)]\geq [k[G]].\]
In particular, if
  $|G|$ is prime to the characteristic exponent of $k$,
then each $V\in \Irr_k(G)$ has positive
multiplicity in  the $G$-module $H^0(C,\omega_C)$.
\end{cor}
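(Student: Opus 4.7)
The plan is to apply Lemma~\ref{lem: chi_G omega + A} with $\sE=\omega_C$ (so $r=1$ and $\sA=\omega_C^{-1}\otimes\omega_C=\sO_C$ has $\deg\sA=0$), and then pass from $\chi_G(\omega_C)$ to $[H^0(C,\omega_C)]$ via Serre duality on the $H^1$ piece.

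First, Lemma~\ref{lem: chi_G omega + A} gives directly
\[
\chi_G(\omega_C)\ \geq\ \chi(\omega_D)\,[k[G]]
\]
in $R_k(G)_\QQ$, where $D=C/G$ is smooth and connected. Since $D$ is smooth projective connected of genus $g(D)\geq 2$, Riemann--Roch yields $\chi(\omega_D)=g(D)-1\geq 1$, so
\[
\chi_G(\omega_C)\ \geq\ [k[G]].
\]

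Next I would unpack $\chi_G(\omega_C)=[H^0(C,\omega_C)]-[H^1(C,\omega_C)]$. By Serre duality, $H^1(C,\omega_C)\cong H^0(C,\sO_C)^{\vee}=k$, and this is the trivial $G$-module since $G$ acts on the $1$-dimensional $k$-algebra $H^0(C,\sO_C)=k$ by $k$-algebra automorphisms, hence trivially. Thus $[H^1(C,\omega_C)]=[1_G]$ and
\[
[H^0(C,\omega_C)]\ =\ \chi_G(\omega_C)+[1_G]\ \geq\ [k[G]]+[1_G]\ \geq\ [k[G]],
\]
which is the first assertion.

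For the "in particular" statement, suppose $|G|$ is prime to the characteristic exponent of $k$. Then $k[G]$ is semisimple and, since $k$ is algebraically closed, the Wedderburn decomposition gives
\[
[k[G]]=\sum_{V\in\Irr_k(G)}(\dim V)\,[V],
\]
so each irreducible $V$ has multiplicity $\dim V\geq 1$ in $[k[G]]$. Combined with $[H^0(C,\omega_C)]\geq [k[G]]$, the multiplicity $\langle [V],[H^0(C,\omega_C)]\rangle$ is strictly positive for every $V\in\Irr_k(G)$, which is exactly the asserted faithful appearance of every irreducible representation. No step here is a serious obstacle once Lemma~\ref{lem: chi_G omega + A} is available; the only subtle point is noting that the $G$-action on $H^1(C,\omega_C)$ is trivial, which is immediate from Serre duality.
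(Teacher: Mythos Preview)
Your proof is correct and follows essentially the same approach as the paper's own: apply Lemma~\ref{lem: chi_G omega + A} with $\sE=\omega_C$ to get $\chi_G(\omega_C)\geq \chi(\omega_D)[k[G]]\geq [k[G]]$, then add $[H^1(C,\omega_C)]=[1_G]$ to pass to $[H^0(C,\omega_C)]$. You have simply supplied more detail (the Serre duality justification for $[H^1(C,\omega_C)]=[1_G]$ and the Wedderburn argument for the ``in particular'' clause) than the paper's one-line proof.
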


\begin{proof}
    Take $\sE=\omega_C$ in Lemma~\ref{lem: chi_G omega + A}, and note that 
    \[
    [H^0(C,\omega_C)] = \chi_G(\omega_C) + [H^1(C, \omega_C)] = \chi_G(\omega_C) + [1_G]>\chi_G(\omega_C).
    \]
\end{proof}
The conclusion of Corollary~\ref{cor: g(C/G)>1} does not hold any more in the singular case. We provide in the following two examples for this phenomenon.

\begin{ex}
Let $C=C_1\cup C_2$ be a stable curve consisting of two smooth
components such that $C_1\cap C_2=\{P\}$, and that $\Aut(C_2)$
  contains a cyclic subgroup $G$ of order $n>g(C_2)+1$. Let $G$ act on
  $C$ as follows: 
    \begin{enumerate}
        \item $G$ preserves the components $C_1$ and $C_2$;
        \item $G$ acts trivially on $C_1$ and as subgroup of
          $\Aut(C_2)$  on $C_2$. 
    \end{enumerate} 
    Then $H^0(C, \omega_C) = H^0\left(C_1, \omega_{C_1}\right) \oplus H^0\left(C_2, \omega_{C_2}\right) $ and $G$ acts trivially on the first summand $H^0\left(C_1, \omega_{C_1}\right)$. Now there are $n-1$ nontrivial irreducible characters of $G$, while $\dim H^0\left(C_2, \omega_{C_2}\right)=g(C_2)<n-1$ by assumption, there must be some nontrivial irreducible character of $G$ not present in $H^0\left(C_2, \omega_{C_2}\right)$ and hence not in $H^0(C, \omega_C)$ either. On the other hand, the arithmetic genus of the quotient curve is
    \[
    p_a(C/G) =g(C_1/G) + g(C_2/G) \geq g(C_1/G) = g(C_1)
    \]
    which can be chosen arbitrarily large.
\end{ex}

In the following example the curve $C$ is even irreducible.

\begin{ex}
Let $p\geq 5$ be a prime number different from $\mathrm{char}(k)$, and $G=\langle\sigma\rangle\cong \ZZ/p\ZZ$. By the Riemann existence theorem, we may construct a $G$-cover $\tilde\pi\colon\tC\rightarrow \PP^1$ with $4$ branch points $Q_j, 1\leq j\leq 4$, over which the monodromies are given by the sequence 
    \[
   \left(\sigma,\, \sigma,\, \sigma,\,\sigma^{-3}\right).
    \]
Let $\tP_j$ be the inverse image of $Q_j$ for $1\leq j\leq 4$, which are fixed by $G$. Recall that 
\[
\theta_{\tP_j}=\left[T_{\tP_j}^* \tC\right]\in R_k(G), \quad 1\leq j\leq 4.
\]
For $r\in \ZZ$, define $\chi_r:=\theta_{\tP_1}^r$. Then the $\{\chi_r\}_{1\leq r\leq p}$
are exactly  the classes of the irreducible $G$-modules, with $\chi_p = [1_G]$. We have
\[
\theta_{\tP_1} = \theta_{\tP_2} = \theta_{\tP_3} =\theta_{\tP_4}^{-3}=\chi_{1} 
\]
One computes easily by the Riemann--Hurwitz formula that $g(\tC) =
p-1$. By Theorem~\ref{thm: sm}, we have 
\[
[H^0(\tC, \omega_{\tC})]=\chi_G(\omega_{\tC})+[1_G] = [1_G] + \frac{p-2}{p}[k[G]]+\frac{1}{p}\sum_{j=1}^4 \left(\frac{p-1}{2}[k[G]]-\sum_{d=0}^{p-1}d\theta_{\tP_j}^{d+1}\right)
\]
We have
  $\langle \chi_r, \theta_{\tP_4}^{d+1}\rangle=1$ if $d+1\equiv -3r
  \mod p$ and is zero otherwise.
It follows that
    \begin{equation}\label{eq: eigenspaces}
        \mu_{\chi_r}H^0(\tC, \omega_{\tC}) =\langle \chi_r, [H^0(\tC, \omega_{\tC})]\rangle = 
    \begin{cases}
        2, & \text{if $1\leq r\leq \lfloor\frac{p-1}{3}\rfloor$},  \\
        1, & \text{if $\lceil\frac{p}{3} \rceil\leq  r \leq \lfloor \frac{2p-1}{3} \rfloor$}, \\
        0, & \text{if $\lceil \frac{2p}{3}\rceil \leq r \leq p$}. \\
    \end{cases} 
    \end{equation}
    Let $\nu\colon \tC\rightarrow C$ be the gluing of the pairs of points $\{\tP_1, \tP_2\}$ and $\{\tP_3, \tP_4\}$, so that $C$ is a stable curve with two nodes $P_1=\nu(\tP_1)$ and $P_2=\nu(\tP_3)$. Since the points $\tP_j$ are fixed by the $G$, the action of $G$ descends to $C$, preserving the local branches at $P_1$ and $P_2$.  By \eqref{eq: omegaC-omegatC}, we have 
    \[
    [H^0(C, \omega_C)] = \left[H^0(\tC, \omega_{\tC})\right] + [\omega_C|_S]= \left[H^0(\tC, \omega_{\tC})\right] + 2[1_G].
    \] 
Thus for $\lceil \frac{2p}{3}\rceil \leq r \leq p-1$, we have
    \[
 \mu_{\chi_r}H^0(C, \omega_C) =    \mu_{\chi_r}H^0(\tC, \omega_{\tC}) = 0.
    \]
 On the other hand, the quotient curve $D=C/G$ is a rational curve
 with two nodes, and hence $p_a(D)=2$. However, $H^0(C, \omega_C)$ does
 not contain any copy of $\chi_r$. 
\end{ex}

\begin{cor}\label{cor: H0 omega tC tensor A vs k[G]}
Let $C = \bigcup_{1\leq i\leq n} C_i$ be a proper nodal curve over $k$, and $G\subseteq\Aut(C)$ a
  tame finite subgroup such that the inertia group $I_i$ for each component $C_i$ is trivial.
  Let $\sE$ be a locally free $G$-sheaf of rank $r$ on $C$, and denote $\sA=\sE\otimes \omega_C^{-1}$. Suppose that $\sA$ is ample. Then the following holds.
  \begin{enumerate}
  \item For any $P\in S$, we have 
    $\chi_G(\sE)\geq \Ind_{G_P}^G [\sE|_P]$.  
In particular if $G$ acts freely on the orbit of a singular point, then
$\chi_G(\sE)\ge r[k[G]]$. 
  \item For all $i\leq n$, we have 
    \[ \chi_G(\sE)\geq \left(r\chi(\omega_{\tD_i})+\frac{1}{|G_i|}\deg
        (\sA|_{C_i})\right)[k[G]]. 
    \]
    where $G_i$ is the stabilizer of $C_i$ (Set-ups~\ref{setup: curve}), and $\tD_i=\tC_i/G_i$ is the quotient curve.
    In particular, if  $\deg \sA|_{C_i}> -r|G_i|\chi(\omega_{\tD_i})$
    (automatic if $g(\tD_i)> 0$), 
    then any irreducible representation 
    of $G_i$ has positive virtual multiplicity  in $\chi_G(\sE)$.     
\item If $\deg \sA|_{C_i}\geq  |G_i|(-r\chi(\omega_{\tD_i})+1)$
  (automatic if $g(\tD_i)\geq 2$) for some $i\leq n$,  then $\chi_G(\sE)\geq [k[G]]$.
\end{enumerate}
\end{cor}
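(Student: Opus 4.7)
The proof plan is built on the splitting given by Lemma~\ref{lem:chi_GsL}: since $\sA$ is ample, both $\chi_G(\sE) = [H^0(C,\sE)]$ and $\chi_G(\omega_{\tC}\otimes \nu^*\sA) = [H^0(\tC,\omega_{\tC}\otimes\nu^*\sA)]$ are classes of honest $G$-modules, and Lemma~\ref{lem:chi_GsL} identifies
\[
\chi_G(\sE) = [H^0(\tC,\omega_{\tC}\otimes\nu^*\sA)] + [H^0(S,\sE|_S)]
\]
as a sum of two elements of $R_k(G)_\QQ^+$. All three items will follow by bounding below one of these two pieces.

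For (1), I would apply Corollary~\ref{lem: conn decomp} to the finite reduced $G$-scheme $S$, whose connected components are its closed points. Collecting the $|G|/|G_P|$ terms indexed by a single $G$-orbit $G\cdot P$, the weighted sum collapses to $\Ind_{G_P}^G[\sE|_P]$, while the contributions from the remaining orbits are again classes of honest $G$-modules and thus lie in $R_k(G)_\QQ^+$. Hence $\chi_G(\sE) \geq [H^0(S,\sE|_S)] \geq \Ind_{G_P}^G[\sE|_P]$. In the free case, $G_P = \{1\}$ makes $[\sE|_P] = r[1_{\{1\}}]$ and $\Ind_{\{1\}}^G(r[1_{\{1\}}]) = r[k[G]]$.

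For (2), I would analogously apply Corollary~\ref{lem: conn decomp} to the smooth curve $\tC = \bigsqcup_j \tC_j$. The contribution of the $G$-orbit containing $\tC_i$ collapses to $\Ind_{G_i}^G \chi_{G_i}(\omega_{\tC_i}\otimes\nu^*\sA|_{\tC_i})$, while the other orbit-contributions again lie in $R_k(G)_\QQ^+$. The hypothesis $I_i = \{1\}$ ensures that $G_i$ acts faithfully and tamely on the smooth connected curve $\tC_i$, so Lemma~\ref{lem: chi_G omega + A} applies and yields
\[
\chi_{G_i}(\omega_{\tC_i}\otimes\nu^*\sA|_{\tC_i}) \geq \left(r\chi(\omega_{\tD_i}) + \tfrac{1}{|G_i|}\deg\sA|_{C_i}\right)[k[G_i]],
\]
using that $\deg\nu^*\sA|_{\tC_i} = \deg\sA|_{C_i}$ (normalization is birational on each component). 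Inducing to $G$ and invoking $\Ind_{G_i}^G k[G_i] \simeq k[G]$, together with $[H^0(S,\sE|_S)] \in R_k(G)_\QQ^+$, delivers the bound of~(2). Item~(3) is then immediate: the numerical hypothesis is exactly $r\chi(\omega_{\tD_i}) + |G_i|^{-1}\deg\sA|_{C_i} \geq 1$, so (2) yields $\chi_G(\sE) \geq [k[G]]$.

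The main bookkeeping step I would verify with care is the orbit collapse in Corollary~\ref{lem: conn decomp}: for $\tC_j = g\tC_i$, one has $G_j = gG_ig^{-1}$ and the $G$-equivariance of $\nu$ and $\sE$ gives a natural identification $\Ind_{G_j}^G H^0(\tC_j,\,\cdot\,) \simeq \Ind_{G_i}^G H^0(\tC_i,\,\cdot\,)$, so that the $[G:G_i]$ weighted terms of the orbit really do sum to a single $\Ind_{G_i}^G\chi_{G_i}(\omega_{\tC_i}\otimes\nu^*\sA|_{\tC_i})$. Once this symmetrization is checked, the corollary reduces entirely to the smooth connected case already handled by Lemma~\ref{lem: chi_G omega + A}.
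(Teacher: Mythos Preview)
Your proposal is correct and follows essentially the same route as the paper: use Lemma~\ref{lem:chi_GsL} to split $\chi_G(\sE)$ as a sum of two honest $G$-modules, bound (1) by the $S$-summand restricted to a single orbit, and bound (2)--(3) by the $\tC$-summand restricted to the orbit of $\tC_i$ followed by Lemma~\ref{lem: chi_G omega + A}. The only cosmetic difference is that the paper phrases the orbit collapse as ``$\bigoplus_{P'\in G\cdot P}\sE|_{P'}$ (resp.\ $\bigoplus_{C_j\subseteq G\cdot C_i} H^0(C_j,\cdot)$) is a direct summand,'' whereas you invoke Corollary~\ref{lem: conn decomp} and then verify the bookkeeping; these are the same argument.
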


\begin{proof}  (1) 
 Let $P\in S$. Then $\bigoplus_{P'\in G.P} \sE|_{P'}$ is a direct
summand  of $\sE|_S$. By Lemma~\ref{lem:chi_GsL}, 
\[
\chi_G(\sE)=[H^0(\tC, \omega_{\tC}\otimes \nu^*\sA)] + [\sE|_S] \geq [\sE|_S]\geq \sum_{P'\in G.P} [\sE|_{P'}]=\Ind_{G_P}^G[\sE|_P].   
 \] 
 If $G_P=\{1\}$, then the right-hand side is equal to $r[k[G]]$.  

 (2) By Lemma~\ref{lem:chi_GsL},
 $\chi_G(\sE)\geq \chi_G(\omega_{\tC}\otimes \nu^*\sA)$.
 As $\nu^*\sA$ is ample on $\tC$ with $\deg \nu^*\sA|_{\tC_i}=\deg
 \sA|_{C_i}$,  it is enough to work over $\tC$, hence we can
 suppose $C$ is smooth. 
 Fix $i\le n$. Then
 $\bigoplus_{C_{j} \subseteq  GC_i} H^0(C_j, \sE|_{C_j})$
 is a direct summand of $H^0(C, \sE)$,  hence 
 \[
\chi_G(\sE)=[H^0(C, \sE)]\geq \left[\bigoplus_{C_j \subseteq GC_i} H^0(C_j, \sE|_{C_j})\right]
   =\Ind_{G_i}^G [H^0(C_i, \sE|_{C_i})].  
 \]
It suffices to apply Lemma~\ref{lem: chi_G omega + A} to $(C_i, \sE|_{C_i})$
with the tame subgroup $G_i\subseteq \Aut(C_i)$.

(3) is an immediate consequence of (2). 
\end{proof}

\subsection{The \texorpdfstring{$G$}{}-invariant part of \texorpdfstring{$H^0(C, \omega_C(T)^{\otimes m})$}{}}
Let the notation be as in Set-ups~\ref{setup: curve} and \ref{setup: nodal}. 

\begin{thm}\label{thm: pluricanonical G-inv}
Assume that $G$ acts faithfully on $C$. Let $T$ be a finite $G$-set contained in the smooth locus of $C$. Then for any integer $m$, we have
\[
\langle [1_G], \chi_G(\omega_C(T)^{\otimes m})\rangle = \chi(\omega_D(\oT)^{\otimes m}) +(m - \epsilon_m) \#\oS_2+\sum_{Q\in D\setminus (\oS\cup\oT)}\left\lfloor m\left(1-\frac{1}{|e_{Q}|}\right)\right\rfloor   
\]
where $\oS,\, \oS_2, \oT\subset D$ are the images of $S$, $S_2$ and $T$ respectively, for $Q\in D\setminus(\oS\cup \oT)$, $e_Q:=|G_P|$ for $P\in \pi^{-1}(Q)$, and 
\begin{equation}\label{eq: epsilon_m}
  \epsilon_m =
\begin{cases}
1 & \text{if $m$ is odd} \\
0 & \text{if $m$ is even}
\end{cases}  
\end{equation}
\end{thm}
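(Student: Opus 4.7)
The plan is to reduce to the smooth case (Proposition~\ref{prop: log smooth}) via the normalization $\nu\colon \tC\to C$, using the $G$-equivariant short exact sequence \eqref{eq:exactsnE} applied to $\sE=\omega_C(T)^{\otimes m}$. Writing $\tS=\nu^{-1}(S)$ and using the adjunction $\nu^*\omega_C\simeq \omega_{\tC}(\tS)$ on a nodal curve, together with $\nu^*\sO_C(T)=\sO_{\tC}(T)$ (since $T$ lies in the smooth locus), one obtains
\[
\chi_G(\omega_C(T)^{\otimes m}) \;=\; \chi_G\bigl(\omega_{\tC}(\tS+T)^{\otimes m}\bigr) \;-\; [H^0(S,\omega_C(T)^{\otimes m}\otimes\sS)].
\]
I would then take $\langle [1_G],\cdot\rangle$ of both sides and compute each piece separately.

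For the first piece, I would apply Proposition~\ref{prop: log smooth} to the smooth curve $\tC$ with the $G$-invariant subset $T':=\tS\sqcup T$ (noting that $G$ remains faithful and tame on $\tC$):
\[
\langle [1_G],\chi_G(\omega_{\tC}(T')^{\otimes m})\rangle
=\chi(\omega_{\tD}(\oT')^{\otimes m}) + \sum_{Q'\in \tD\setminus \oT'}\left\lfloor m\!\left(1-\tfrac{1}{e_{Q'}}\right)\right\rfloor,
\]
where $\tD=\tC/G$ is the normalization of $D$ and $\oT'=\tilde\pi(T')$. The key geometric observation here is that, as $G_P$ fixes both branches for $P\in S_1$ and exchanges them for $P\in S_2$, the preimages $\nu^{-1}(P)=\{\tP_1,\tP_2\}$ lie in distinct $G$-orbits in the first case and in the same orbit in the second. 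Consequently $\tilde\pi(\tS)=\tilde\nu^{-1}(\oS)$ where $\tilde\nu\colon \tD\to D$ is the normalization, and $\tilde\nu$ is two-to-one over $\oS_1$ (the nodes of $D$) and one-to-one over $\oS_2$ (which lie in the smooth locus of $D$).

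To convert $\chi(\omega_{\tD}(\oT')^{\otimes m})$ into $\chi(\omega_D(\oT)^{\otimes m})$, I would use the conductor sequence $0\to \sO_D\to \tilde\nu_*\sO_{\tD}\to \bar\sS\to 0$ supported on $\oS_1$. Tensoring by $\omega_D(\oT)^{\otimes m}$, projection formula and $\tilde\nu^*\omega_D=\omega_{\tD}(\tilde\nu^{-1}(\oS_1))$ yield
\[
\chi(\omega_{\tD}(\tilde\nu^{-1}(\oS_1)+\oT)^{\otimes m})=\chi(\omega_D(\oT)^{\otimes m})+\#\oS_1,
\]
and twisting by $\sO_{\tD}(m\,\tilde\nu^{-1}(\oS_2))$ supplies the extra $m\#\oS_2$ to reach $\chi(\omega_{\tD}(\oT')^{\otimes m})$. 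For the second piece, Lemma~\ref{lem: S_P} gives $[\omega_C^{\otimes m}\otimes\sS|_P]=[1_{G_P}]$ if $P\in S_1$ and $\chi_P^{m+1}$ if $P\in S_2$; since $\chi_P^2=[1_{G_P}]$ and $\chi_P$ is trivial when $\chara k=2$, Frobenius reciprocity produces $\#\oS_1+\epsilon_m\#\oS_2$. Finally, since $\tilde\nu$ is a bijection outside $\oS_1$ and preserves the local ramification index at smooth points of $C$, the floor sum over $\tD\setminus\oT'$ matches termwise the floor sum over $D\setminus(\oS\cup \oT)$. Collecting these contributions, the $\#\oS_1$ terms cancel and the $\oS_2$ terms combine into $(m-\epsilon_m)\#\oS_2$, yielding the desired formula.

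The main obstacle is purely bookkeeping: correctly tracking how $S_1$ versus $S_2$ points contribute to the normalization of $D$, and in particular verifying that $\tilde\pi(\tS)=\tilde\nu^{-1}(\oS)$ so that the term $m\#\oS_2$ appears naturally from twisting $\omega_{\tD}$ by the additional divisor $m\,\tilde\nu^{-1}(\oS_2)$. Once this compatibility is in place, the rest is a direct assembly of Proposition~\ref{prop: log smooth}, Lemma~\ref{lem: S_P}, and the conductor/adjunction sequences.
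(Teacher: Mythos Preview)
Your proposal is correct and follows essentially the same route as the paper: both reduce to the smooth case via the normalization exact sequence, apply Proposition~\ref{prop: log smooth} to $\omega_{\tC}(\tS+\tT)^{\otimes m}$ on $\tC$, compute the skyscraper contribution from Lemma~\ref{lem: S_P} by Frobenius reciprocity, and then pass from $\tD$ back to $D$ using the conductor sequence together with the identification $\omega_{\tD}(\tilde\pi(\tS+\tT))^{\otimes m}=\nu_D^*\omega_D(\oS_2+\oT)^{\otimes m}$, which encodes exactly your observation that $\tilde\nu$ is two-to-one over $\oS_1$ and one-to-one over $\oS_2$. Your bookkeeping is accurate and the cancellations you describe are precisely those in the paper's proof.
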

\begin{proof}
Tensoring the short exact sequence \eqref{eq:OC-OtC} with $\omega_C(T)^{\otimes m}$, we obtain another $G$-equivariant short exact sequence
\[
0\rightarrow \omega_C(T)^{\otimes m} \rightarrow \nu_*\nu^*\omega_C(T)^{\otimes m}\rightarrow \sS\otimes  \omega_C(T)^{\otimes m}\rightarrow 0
\]
Set $\tS=\nu^{-1}S$ and $\tT = \nu^{-1}T$, viewed also as reduced
divisors on $\tC$. Then $\nu^*\omega_C=\omega_{\tC}(\tS)$
(\cite{Liu06}, Lemma 10.2.12(b)), and we have 
\begin{multline}\label{eq: chi_G omega_C T}
\chi_G(\omega_C(T)^{\otimes m}) = \chi_G(\nu^*\omega_C(T)^{\otimes m}) - [\sS\otimes  \omega_C(T)^{\otimes m}] \\
= \chi_G(\omega_{\tC}(\tS+\tT)^{\otimes m}) - [\sS\otimes  \omega_C(T)^{\otimes m}] 
\end{multline}
Since $S\cap T=\emptyset$, we have $\sS\otimes \omega_C(T)=
\sS\otimes\omega_C$. By Corollary~\ref{lem: conn decomp} and
Lemma~\ref{lem: S_P}, 
\begin{align*}
[\sS\otimes  \omega_C(T)^{\otimes m}] &=\sum_{P\in S}\frac{|G_P|}{|G|} \Ind_{G_P}^G [\sS_P]\otimes [\omega_C|_{P}]^{\otimes m} \\
& = \sum_{P\in S_1} \frac{|G_P|}{|G|}\Ind_{G_P}^G [1_{G_P}] +  \sum_{P\in S_2} \frac{|G_P|}{|G|}\Ind_{G_P}^G \chi_P^{\otimes m+1}. 
\end{align*}
By the Frobenius reciprocity \eqref{eq: Frobenius2}, 
\begin{equation}\label{eq: S omega_C T G-inv}
\langle [1_{G}], [\sS\otimes  \omega_C(T)^{\otimes m}]\rangle =
\#\oS_1 + \#\oS_2\cdot \epsilon_m
\end{equation}
where $\oS_1=\pi(S_1)$.

Let $\nu_{D}\colon \tD\rightarrow D$ be the normalization,  and
let $\tilde \pi\colon \tC\rightarrow \tD=\tC/G$ be the quotient map.
Then $ \tD\setminus \tilde\pi(\tS\cup\tT)=D\setminus (\oS\cup
  \oT)$.
By Proposition~\ref{prop: log smooth}, we have
\begin{equation}\label{eq: omega_C T G-inv}
\langle [1_G], \chi_G(\nu^*\omega_C(T)^{\otimes m})\rangle =
\chi(\omega_{\tD}(\tilde\pi_*(\tS+\tT))^{\otimes m}) + \sum_{Q\in
  D\setminus (\oS\cup \oT)}\left\lfloor m\left(1-\frac{1}{|e_{Q}|}\right)\right\rfloor.    
\end{equation}
Note that, $\omega_{\tD}(\tilde\pi_*(\tS+\tT))^{\otimes m} =
\nu_D^*\omega_D(\oS_2+\oT)^{\otimes m}$, hence there is a short exact sequence
\[
0\rightarrow \omega_D(\oS_2+\oT)^{\otimes m} \rightarrow \nu_{D*}\omega_{\tD}(\tilde\pi_*(\tS+\tT))^{\otimes m} \rightarrow \sS_D\rightarrow 0
\]
where  $\sS_D = \oplus_{Q\in \oS_1} k_Q$ is the skyscraper sheaf supported on the singular locus $\oS_1$ of $D$. It follows that
\begin{equation}\label{eq: chi omega_tD}
\begin{split}
\chi(\omega_{\tD}(\tilde\pi_*(\tS+\tT))^{\otimes m}) & = \chi(\omega_D(\oS_2+\oT)^{\otimes m}) + \#\oS_1 \\
& = \chi(\omega_D(\oT)^{\otimes m}) + \#\oS_1 + m\#\oS_2.
\end{split}
\end{equation}
Combining the equalities \eqref{eq: chi_G omega_C T}, \eqref{eq: S omega_C T G-inv}, \eqref{eq: omega_C T G-inv}, and \eqref{eq: chi omega_tD}, we have
\begin{multline*}
\langle [1_G], \chi_G(\omega_C(T)^{\otimes m})\rangle  = \langle [1_G], \chi_G(\nu^*\omega_C(T)^{\otimes m})\rangle -\langle [1_{G}], [\sS\otimes  \omega_C(T)^{\otimes m}]\rangle \\
 =  \chi(\omega_D(\oT)^{\otimes m}) +(m - \epsilon_m)
\#\oS_2+\sum_{Q\in D\setminus (\oS\cup\oT)}\left\lfloor
  m\left(1-\frac{1}{|e_{Q}|}\right)\right\rfloor.     
\end{multline*}
\end{proof}

\begin{cor}\label{cor: pluricanonical G-inv} 
Keep the hypothesis
    of Theorem~\ref{thm: pluricanonical G-inv}. Suppose further that $C$ is stable and $G\subseteq\Aut(C)$ is a subgroup of order prime to the 
  characteristic
  exponent of $k$.
  Then for any positive integer $m$ such that $m+|T|\geq 2$, we have
\begin{multline}\label{eq: H^0 G-inv}
  \dim_k H^0(C, \omega_C(T)^{\otimes m})^G =
  \chi(\omega_D(\oT)^{\otimes m}) +(m - \epsilon_m) \#\oS_2 \\
+\sum_{Q\in D\setminus (\oS\cup\oT)}\left\lfloor m\left(1-\frac{1}{|e_{Q}|}\right)\right\rfloor  
\end{multline}
with $\epsilon_m =1$ if $m$ is odd and $0$ if $m$ is even.
\end{cor}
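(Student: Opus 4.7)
The plan is to reduce Corollary~\ref{cor: pluricanonical G-inv} to Theorem~\ref{thm: pluricanonical G-inv} by combining two ingredients: the vanishing $H^1(C,\omega_C(T)^{\otimes m})=0$, which turns $\chi_G$ into $[H^0]$, and Maschke's theorem, which identifies $G$-invariants with the multiplicity of $[1_G]$. Granted these,
\[
\dim_k H^0(C,\omega_C(T)^{\otimes m})^G=\langle [1_G], [H^0(C,\omega_C(T)^{\otimes m})]\rangle=\langle [1_G],\chi_G(\omega_C(T)^{\otimes m})\rangle,
\]
and the right-hand side is computed by Theorem~\ref{thm: pluricanonical G-inv}.

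Before invoking the theorem, I would reconcile the two definitions of $\epsilon_m$: Theorem~\ref{thm: pluricanonical G-inv} additionally sets $\epsilon_m=1$ when $\chara(k)=2$, whereas the corollary does not. This discrepancy is harmless under the hypothesis that $|G|$ is prime to $\chara(k)$. Indeed, if $\chara(k)=2$ then $|G|$ is odd, so no subgroup of $G$ can have a subgroup of index~$2$. But any point $P\in S_2$ would yield $G_{\tP}\subsetneq G_P$ of index~$2$ (any $\sigma\in G_P$ swapping the two branches over $P$ satisfies $\sigma^2\in G_{\tP}$, see \eqref{eq: subsets S}), a contradiction. Hence $S_2=\emptyset$ and the term $(m-\epsilon_m)\#\oS_2$ is insensitive to the choice of convention.

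For the cohomology vanishing I would apply Serre duality on the Gorenstein curve $C$ to obtain $H^1(C,\omega_C(T)^{\otimes m})\cong H^0(C,\omega_C^{\otimes 1-m}(-mT))^\vee$. Stability of $C$ ensures that $\omega_C$ is ample, hence has strictly positive degree on every irreducible component. If $m\ge 2$, the sheaf $\omega_C^{\otimes 1-m}(-mT)$ has strictly negative degree on every component, and pulling back to the normalization (a disjoint union of smooth projective curves) shows its $H^0$ vanishes. If $m=1$, then the bound $m+|T|\ge 2$ forces $|T|\ge 1$, and the sheaf in question is $\sO_C(-T)$; since $C$ is connected, no nonzero constant vanishes on a nonempty $T$, so again the $H^0$ is zero.

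Finally, since $|G|$ is coprime to $\chara(k)$, Maschke's theorem makes every finite-dimensional $k[G]$-module semisimple, so for such a module $V$ the invariant dimension $\dim_k V^G$ equals the multiplicity $\langle [1_G],[V]\rangle$. Combined with the $H^1$ vanishing this gives the desired reduction, and the arithmetic computation of the right-hand side is exactly the content of Theorem~\ref{thm: pluricanonical G-inv}. The one nontrivial technical step is the $H^0$ vanishing used in Serre duality; this is the precise reason the stability of $C$ and the bound $m+|T|\ge 2$ are imposed.
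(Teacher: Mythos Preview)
Your proposal is correct and follows the same approach as the paper: reduce to Theorem~\ref{thm: pluricanonical G-inv} via the vanishing $H^1(C,\omega_C(T)^{\otimes m})=0$ (from stability and $m+|T|\ge 2$) together with semisimplicity of $k[G]$ when $|G|\in k^*$, and note that $S_2=\emptyset$ when $\chara(k)=2$ to reconcile the two conventions for $\epsilon_m$. The paper's proof merely asserts these three points in a sentence each, while you supply the supporting arguments (Serre duality plus degree count, Maschke, and the index-$2$ obstruction); nothing in your write-up deviates from or adds to the paper's strategy.
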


\begin{proof}
 Since $|G|\in k^*$, the isomorphism classes of $G$-modules $V$ are
 determined by their classes $[V]$ in $R_k(G)$. Also, we have
 $\#S_2=0$ if $\chara\, k=2$. The equality \eqref{eq: H^0 G-inv}
 follows from Theorem~\ref{thm: pluricanonical G-inv} and from
 the  fact that 
 $H^1(C, \omega_C(T)^{\otimes m})=0$
 when $m+|T|\ge 2$ because $C$ is stable.
 \end{proof}

\subsection{The equivariant deformation space of a stable \texorpdfstring{$G$}{}-curve}\label{sec: Def(C,G)}\label{sec: deform}

Let the notation be as in Set-ups~\ref{setup: curve} and \ref{setup:
  nodal}. Suppose further that $C$ is stable and $|G|\in k^*$.
If $G$ acts freely on a dense open subset of $C$, then 
the equivariant deformation of  $G$-marked stable curves 
 $(C, G)$ is unobstructed, and the tangent space of the $G$-equivariant deformation space $\Def(C,G)$ is $\Ext^1(\Omega_C, \sO_C)^G$, where $\Omega_C$ denotes the sheaf of K\"ahler differentials on $C$ (\cite{Tuf93}). 
The more general case, where the action is possibly not free on a
dense open subset,  has been studied in \cite{Mau06, Cat12, Li21}.

Based on Theorem~\ref{thm: pluricanonical G-inv}, the following theorem
computes $ \dim \Ext^1(\Omega_C, \sO_C)^G$ for a stable $G$-curve $C$.

\begin{thm}\label{thm: Def(C,G)}
Let $S_3\subseteq S$ be the subset of  singular points that are
smoothable with respect to the $G$-action, as in Definition~\ref{defn:
  smoothable}. Let $B\subset D_\sm$ be the set of branch points of
$C_\sm\rightarrow \pi(C_\sm)$, $\oS_i\subset D$ be the image of $S_i$
for $1\leq i\leq 3$.  
Then
\begin{equation}\label{eq: Def(C,G)}
     \dim \Ext^1(\Omega_C, \sO_C)^G = 3(p_a(D)-1) + \#B+ 2\#\oS_2 
     -\#\oS + \#\oS_3.  
\end{equation}
\end{thm}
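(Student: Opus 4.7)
The plan is to combine the local-to-global Ext spectral sequence with the pluricanonical formula of Corollary~\ref{cor: pluricanonical G-inv}. Since $C$ is nodal, $\sExt^i(\Omega_C,\sO_C)=0$ for $i\ge 2$ and $\sExt^1(\Omega_C,\sO_C)\cong\bigoplus_{P\in S}T_P^1$ is a skyscraper with one-dimensional stalks. The spectral sequence therefore collapses to the short exact sequence of $k[G]$-modules
\[
0\to H^1(C,T_C)\to \Ext^1(\Omega_C,\sO_C)\to \bigoplus_{P\in S}T_P^1\to 0,
\]
and since $|G|\in k^*$ the functor of $G$-invariants preserves exactness.

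At each node $P$, $T_P^1$ is the one-dimensional tangent-to-smoothing space parametrizing deformations of the form $xy=t$, and $g\in G_P$ scales the local equation $f=xy$ by $\xi_g\xi'_g$ in the branch-preserving case and by $\lambda_g\mu_g$ in the branch-swapping case, acting on $T_P^1\cong k$ by the inverse of this scalar. Comparing with Definition~\ref{defn: smoothable} one sees that this $G_P$-character is trivial precisely when $P\in S_3$; consequently
\[
\dim\Bigl(\bigoplus_{P\in S}T_P^1\Bigr)^G=\#\oS_3.
\]

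For $H^1(C,T_C)^G$, I identify $T_C=\nu_*\bigl(T_{\tC}(-\tS)\bigr)$ as $G$-equivariant sheaves: locally at a node, any derivation $\delta\colon\sO_{C,P}\to\sO_{C,P}$ with $\sO_{C,P}=k[[x,y]]/(xy)$ satisfies $\delta(x)\in(x)$ and $\delta(y)\in(y)$ (because $\delta(xy)=0$), so $\delta$ corresponds uniquely to a derivation of $k[[x]]\oplus k[[y]]$ vanishing at the two points of $\nu^{-1}(P)$. The projection formula and Serre duality on $\tC$ then yield a $G$-equivariant isomorphism $H^1(C,T_C)^\vee\cong H^0(\tC,\omega_{\tC}^{\otimes 2}(\tS))$. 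I relate the right-hand side to $H^0(C,\omega_C^{\otimes 2})^G$ by combining the two short exact sequences
\[
0\to \omega_C^{\otimes 2}\to \nu_*\bigl(\omega_{\tC}^{\otimes 2}(2\tS)\bigr)\to \sS\otimes\omega_C^{\otimes 2}\to 0,
\]
obtained by tensoring \eqref{eq:OC-OtC} with $\omega_C^{\otimes 2}$, and
\[
0\to \omega_{\tC}^{\otimes 2}(\tS)\to \omega_{\tC}^{\otimes 2}(2\tS)\to \bigoplus_{\tP\in\tS}\omega_{\tC}^{\otimes 2}(2\tP)|_{\tP}\to 0.
\]
By Lemma~\ref{lem: S_P}, the representation $[\omega_C|_P]^{\otimes 2}$ is trivial, so $\sS\otimes\omega_C^{\otimes 2}\cong\sS$ has $G$-invariants of dimension $\#\oS_1$ (in characteristic different from $2$); each fiber $\omega_{\tC}^{\otimes 2}(2\tP)|_{\tP}$ carries the trivial $G_{\tP}$-representation, since $dx/x$ is $G_{\tP}$-invariant (residues are preserved), so the right-most skyscraper above has $G$-invariants of dimension $\#(\tS/G)=2\#\oS_1+\#\oS_2$; and the relevant $H^1$'s vanish because $\omega_C$ is ample and $\omega_{\tC}^{-1}(-\tS)$ has negative degree on every component of $\tC$ (by the stability of $C$). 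Combining gives
\[
\dim H^1(C,T_C)^G=\dim H^0(C,\omega_C^{\otimes 2})^G-\#\oS.
\]

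Finally, Corollary~\ref{cor: pluricanonical G-inv} with $m=2$ and $T=\emptyset$, together with $\chi(\omega_D^{\otimes 2})=3(p_a(D)-1)$ (Riemann--Roch on $D$) and the elementary identity $\sum_{Q\in D\setminus\oS}\lfloor 2(1-1/e_Q)\rfloor=\#B$, yields $\dim H^0(C,\omega_C^{\otimes 2})^G=3(p_a(D)-1)+2\#\oS_2+\#B$, and assembling the three contributions produces~\eqref{eq: Def(C,G)}. The main obstacle is the careful $G$-equivariant bookkeeping at nodes and ramification points, in particular establishing $T_C=\nu_*T_{\tC}(-\tS)$ equivariantly and identifying the trivial characters on $\sS\otimes\omega_C^{\otimes 2}$ and the residue fibers. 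The characteristic-$2$ discrepancy in the $\sS$-analysis is automatically compensated by the adjustment $\epsilon_2=1$ in Corollary~\ref{cor: pluricanonical G-inv}.
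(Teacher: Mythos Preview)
Your proof is correct and arrives at the same numerical pieces as the paper, but the decomposition you use is genuinely different. The paper filters $\Omega_C$ by its torsion submodule, obtaining the chain $\Omega_{C,\tor}\subset\Omega_C$ with quotient $\sI\omega_C\subset\omega_C$, and then applies the global functor $\Ext^{\bullet}(\,-\,,\sO_C)$ together with the duality $\Ext^1(\sF,\sO_C)\cong H^0(C,\sF\otimes\omega_C)^\vee$ (their Lemma~\ref{lem:cext}). This gives directly
\[
\dim\Ext^1(\Omega_C,\sO_C)^G=\dim\Ext^1(\omega_C,\sO_C)^G-\dim\Ext^1(\omega_C|_S,\sO_C)^G+\dim\Ext^1(\Omega_{C,\tor},\sO_C)^G,
\]
where the three terms are $\dim H^0(C,\omega_C^{\otimes 2})^G$, $\#\oS$, and $\#\oS_3$ respectively; the last is identified via their Lemma~\ref{lem:OtOoS}(4), which is the dual statement to your character computation on $T_P^1$.

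Your route instead splits $\Ext^1$ via the local-to-global spectral sequence into the locally trivial part $H^1(C,T_C)$ and the local part $\bigoplus_P T_P^1$, which is the standard deformation-theoretic picture. The cost is that you must then pass to $\tC$ (through $T_C=\nu_*T_{\tC}(-\tS)$ and Serre duality) and compare $H^0(\tC,\omega_{\tC}^{\otimes2}(\tS))^G$ with $H^0(C,\omega_C^{\otimes2})^G$ by two auxiliary short exact sequences, whereas the paper stays on $C$ throughout. Both approaches feed into the same pluricanonical formula at the end. Your approach has the merit of exhibiting the usual splitting into smoothing directions and equisingular directions; the paper's approach avoids the spectral sequence and the detour through the normalization. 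As a side remark, your characteristic-$2$ caveat is moot here: the standing hypothesis $|G|\in k^*$ forces $S_2=\emptyset$ when $\chara k=2$.
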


We first establish some preliminary results.

\begin{lem}\label{lem:OtOoS} Let $C$ be a nodal curve over $k$.
  Let $\sK_C$ be the sheaf of rational functions on $C$ and let
  $\sI\subset\sO_C$ be the sheaf of ideals defining the (reduced) singular
  locus $S$ of $C$. 
  \begin{enumerate}[\rm (1)] 
  \item The kernel of the canonical map
    $\Omega_C\to \Omega_C\otimes_{\sO_C} \sK_C$ is
    the torsion part $\Omega_{C, \tor}$ of $\Omega_{C}$ and
   $(\Omega_{C, \tor})_P$ is a simple $\sO_{C,P}$-module 
(hence isomorphic to $k(P)$.) 
  \item The image of $\Omega_C\to \Omega_C\otimes_{\sO_C} \sK_C$ is
    equal to $\sI \omega_C$.
   \item We have a canonical exact sequence
     \begin{equation}
       \label{eq:OtOoS}
      0 \to \Omega_{C, \tor}\to \Omega_C\to \omega_C \to \omega_C|_S
      \to 0.     \end{equation}
   \item 
The action of $G_{P}$ on $(\Omega_{C, \tor})_P\otimes \omega_C$ is trivial if and only if $(C, G)$ is smoothable at $P$.
  \end{enumerate} 
  \end{lem}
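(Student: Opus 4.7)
The plan is to argue everything locally at a node $P\in S$, using that the completion $\widehat{\sO}_{C,P}$ is isomorphic to $k[[x,y]]/(xy)$, with $x, y$ local equations of the two branches at $P$. Parts (1)--(3) then become direct module-theoretic calculations, and part (4) reduces to tracking how $G_P$ acts on the natural one-dimensional generators of $(\Omega_{C,\tor})_P$ and $\omega_C|_P$.

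For (1), I would present $\Omega_{C,P}$ by generators $dx, dy$ with the single relation $x\,dy + y\,dx = 0$, set $\omega := x\,dy = -y\,dx$, and verify using $xy = 0$ that $x\omega = -xy\,dx = 0$ and $y\omega = xy\,dy = 0$; so $\gm_P$ annihilates $\omega$ and $k\cdot\omega \subseteq (\Omega_{C,\tor})_P$. Quotienting by $k\cdot\omega$ imposes $x\,dy = 0$ and $y\,dx = 0$, so the quotient identifies with $k[[x]]\,dx \oplus k[[y]]\,dy$, the direct sum of the differentials on the two smooth branches, which is torsion-free. This shows $(\Omega_{C,\tor})_P = k\cdot\omega$ is simple and is precisely the kernel of the localization $\Omega_C \to \Omega_C \otimes_{\sO_C} \sK_C$.

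For (2) and (3), I would use that $\omega_{C,P}$ is free of rank one, generated by the Rosenlicht differential $\eta := dx/x = -dy/y$. Since $\omega_C$ is torsion-free, the canonical map $\Omega_C \to \omega_C$ factors through $\Omega_C/\Omega_{C,\tor}$ and sends $dx \mapsto x\eta$, $dy \mapsto -y\eta$, so its image is $(x,y)\eta = \sI_P \omega_{C,P}$, which proves (2). Splicing together the two short exact sequences $0 \to \Omega_{C,\tor} \to \Omega_C \to \sI\omega_C \to 0$ and $0 \to \sI\omega_C \to \omega_C \to \omega_C|_S \to 0$ produces the four-term sequence in (3).

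For (4), I would split $\sigma\in G_P$ into branch-preserving and branch-swapping cases and compute the induced action on the generators $\omega$ and $\eta$. If $\sigma\in G_{\tP}$, write $\sigma(x) = ux$, $\sigma(y) = vy$ with units $u, v$ and $u(P) = \xi_\sigma$, $v(P) = \zeta_\sigma$; using $xy = 0$ one obtains $\sigma(\omega) = uv\,\omega$ and $\sigma(\eta) = \eta + du/u$, where by (2) the correction $du/u$ lies in $\sI\omega_C$ and therefore vanishes in $\omega_C|_P$. The character of $\sigma$ on $(\Omega_{C,\tor})_P \otimes \omega_C|_P$ is thus $\xi_\sigma\zeta_\sigma$, trivial for every such $\sigma$ iff the first smoothability condition $\zeta_\sigma = \xi_\sigma^{-1}$ holds. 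If $\sigma\in G_P\setminus G_{\tP}$, write $\sigma(x) = uy$, $\sigma(y) = vx$ with $u(P) = \lambda_\sigma$, $v(P) = \mu_\sigma$; then $\sigma(\omega) = uv\cdot y\,dx = -uv\,\omega$ and $\sigma(\eta) = dy/y + du/u = -\eta + du/u$, so the actions on the two tensor factors are $-\lambda_\sigma\mu_\sigma$ and $-1$ respectively, and the two signs cancel to give action $\lambda_\sigma\mu_\sigma$ on the tensor, trivial iff the second smoothability condition $\mu_\sigma = \lambda_\sigma^{-1}$ holds. Assembling the two cases matches the two-part definition of smoothability. The main subtlety is this sign cancellation in the branch-swapping case, together with the appeal to (2) that makes the correction $du/u$ drop out on $\omega_C|_P$; this is what forces (4) to come last and to depend on (2).
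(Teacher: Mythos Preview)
Your proposal is correct and follows essentially the same route as the paper: reduce to the formal completion $k[[x,y]]/(xy)$, identify $(\Omega_{C,\tor})_P = k\cdot(x\,dy)$ and $\omega_C|_P = k\cdot(dx/x)$, and then compute the $G_P$-action on each tensor factor separately in the branch-preserving and branch-swapping cases. The only cosmetic difference is that the paper first diagonalizes a generator of the (cyclic, tame) group $G_{\tP}$ so that $\tau^*(x)=\xi x$ and $\tau^*(y)=\xi' y$ on the nose, whereas you carry a general unit $u$ along and invoke (2) to kill the correction term $du/u$ in $\omega_C|_P$; both lead to the same character computation and the same sign cancellation in the swapping case.
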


\begin{proof} (3) is an immediate consequence of (1) and (2). 
The statements of (1), (2) and (4) are local at singular points $P$
and are compatible with the extension to the
formal completion $\widehat{\sO}_{C,P}$. Therefore we can work
with the local $k$-algebra $k[[x,y]]/(xy)\cong \widehat{\sO}_{C,P}$, on which $G_P$ acts linearly. Then $(\Omega_{C, \tor})_P=k(xdy)=k(ydx)$. 
(1) and (2) are then straightforward.

(4) Note that $\omega_C|_P=k(dx/x)=k(dy/y)$ with $dx/x=-dy/y$.

(4.a) Let $\tau$ be a generator of $G_{\tP}$. We have
$\tau^*
(x)=\xi x$ and $\tau^*
(y)=\xi' y$ for some roots of unity $\xi, \xi'\in k^*$.
Then 
\[
\tau^*
(xdy)=\xi \xi' xdy,\quad \tau^*(dx/x)=dx/x.
\]
Therefore
$\tau$ acts trivially on $(\Omega_{C, \tor})_P\otimes \omega_C$ if and only
if $\xi'=\xi^{-1}$.

(4.b) Let $\sigma\in G_P\setminus G_{\tP}$ (this condition is
empty if $P\in S_1$). We know (Lemma~\ref{lem: S_P}) that $\sigma$ acts as
$-1$ on $\omega_C|_P$. On the other hand, there exist
$\lambda_\sigma, \lambda'_\sigma\in k^*$ such that 
\[ \sigma^*
(x)\in \lambda_\sigma y, \quad \sigma^*
(y)\in \lambda'_\sigma x. \]
This implies that
\[ \sigma^*
(xdy)=\lambda_\sigma \lambda'_\sigma ydx=-
(\lambda_\sigma \lambda'_\sigma) xdy\in (\Omega_{C, \tor})_P.\]  

Summarizing (4.a) and (4.b) we see that $G_P$ acts trivially on
$(\Omega_{C, \tor})_P\otimes \omega_C|_P$ if and only if  $\xi'=\xi^{-1}$ and
$\lambda'_\sigma=\lambda_\sigma^{-1}$, that is, the action of
$G_P$ at $P$ is smoothable. 
\end{proof}

\begin{lem} \label{lem:cext} Let $C$ be a projective nodal curve over $k$. 
  \begin{enumerate}[\rm (1)] 
\item Let $\sF$ be a
  coherent sheaf on $C$.  Then we have a canonical isomorphism
  \[
\Ext^1(\sF, \sO_C) \simeq H^0(C, \sF\otimes \omega_C)^{\vee}. 
  \] 
\item Let $G$ be a finite group of order $|G|\in k^*$ acting on $C$. 
  Suppose that $\sF$ is a skyscraper $G$-sheaf. Then 
  \[
  \dim \Ext^1(\sF, \sO_C)^G=\sum_{P\in C} \frac{|G_P|}{|G|}
  \dim (\sF_P\otimes \omega_{C})^{G_P}. 
  \] 
\item For any invertible sheaf $\sL$ on $C$ and any $i>1$,
  we have $\Ext^i(\sF, \sL)=0$.
 \end{enumerate} 
\end{lem}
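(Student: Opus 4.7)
The main tool I would invoke is Serre duality on the projective nodal curve $C$: because $C$ is Gorenstein of pure dimension $1$ with invertible dualizing sheaf $\omega_C$, one has a natural isomorphism $\Ext^i(\sG,\omega_C)\cong H^{1-i}(C,\sG)^\vee$ for every coherent sheaf $\sG$ and every $i\geq 0$. I would combine this with the elementary twist identity $\Ext^i(\sF,\sA\otimes\sL)\cong \Ext^i(\sF\otimes\sL^{-1},\sA)$, which holds whenever $\sL$ is invertible (because $\sL^{-1}$ is flat and twisting by an invertible sheaf preserves injectivity). Together, these produce parts (1) and (3) in a few lines.

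For (1), I would take $\sA=\omega_C$ and $\sL=\omega_C^{-1}$ to obtain
\[
\Ext^1(\sF,\sO_C)\cong \Ext^1(\sF\otimes\omega_C,\omega_C)\cong H^0(C,\sF\otimes\omega_C)^\vee.
\]
For (3), the same manipulation yields
\[
\Ext^i(\sF,\sL)\cong H^{1-i}(C,\sF\otimes\omega_C\otimes\sL^{-1})^\vee,
\]
which vanishes for $i>1$ because the cohomological index is negative.

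For (2), I would first note that Serre duality is functorial, so when $\sF$ carries a $G$-action the isomorphism of (1) is $G$-equivariant. This reduces the statement to the computation of $\dim H^0(C,\sF\otimes\omega_C)^G$: indeed, $|G|\in k^*$ forces $\dim(V^\vee)^G=\dim V^G$ for every finite-dimensional $G$-module $V$, via the Maschke decomposition $V=V^G\oplus V'$ with $V'$ containing no trivial subrepresentation and the identification $(V^\vee)^G\cong(V^G)^\vee$. Since $\sF$ is a skyscraper, $H^0(C,\sF\otimes\omega_C)$ is the direct sum of the stalks $(\sF\otimes\omega_C)_P$ over $P\in\Supp\sF$, and the $G$-action permutes these summands according to the action on $\Supp\sF$. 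Grouping by $G$-orbits and applying the induction formula of Corollary~\ref{lem: conn decomp}, the contribution of an orbit $G.P$ is $\Ind_{G_P}^G (\sF\otimes\omega_C)_P$, whose $G$-invariants have dimension $\dim\bigl((\sF\otimes\omega_C)_P\bigr)^{G_P}$ by Frobenius reciprocity. Rewriting the sum over orbits as a sum over all $P\in C$ weighted by $|G_P|/|G|=1/|G.P|$ gives the stated formula.

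The only step requiring care is the identity $\dim(V^\vee)^G=\dim V^G$, which is exactly where the hypothesis $|G|\in k^*$ enters; apart from this, the proof is a straightforward unwinding of Serre duality and Frobenius reciprocity, and I foresee no real obstacle.
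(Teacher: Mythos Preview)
Your proof is correct and follows essentially the same route as the paper: twist by $\omega_C$ and invoke Serre duality for (1), combine (1) with Corollary~\ref{lem: conn decomp} and Frobenius reciprocity for (2), and for (3) reduce to $\Ext^i(-,\omega_C)$ and use duality again. The only difference is cosmetic: for (3) you invoke the full Cohen--Macaulay form of Serre duality $\Ext^i(\sG,\omega_C)\cong H^{1-i}(C,\sG)^\vee$ as a black box, whereas the paper re-sketches the universal $\delta$-functor argument that underlies that statement; both amount to the same thing.
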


\begin{proof} (1) As $\omega_C$ is invertible, we have
  $ \Ext^1(\sF, \sO_C) \simeq \Ext^1(\sF\otimes \omega_C, \omega_C)$ 
  (\cite[Prop. III.6.7]{Har71}). Then apply 
  Serre's duality theorem   (\cite[Theorem III.7.6]{Har71}).

  (2) Apply (1) and Corollary~\ref{lem: conn decomp}.

  (3) We have $\Ext^i(\sF, \sL)\simeq \Ext^i(\sF\otimes \sL^{\vee}\otimes \omega_C, \omega_C)$. So it is enough to show that $\Ext^i(\sF, \omega_C)=0$ 
  for all coherent sheaves $\sF$ and for all $i>1$. 
  
  For all $q \gg 0$ (depending only on $C$) and all $i>0$, we have
  \[\Ext^i(\sO_C(-q), \omega_C)\simeq \Ext^i(\sO_C, \omega_C(q))
  =H^i(C, \omega_C(q))=0.\]  
Consider the contravariant $\delta$-function $(T^i)_{i\geq 0}$
where $T^i=H^{1-i}(C, .)^{\vee}$ if $i=0,1$ and $T^i\equiv 0$ if $i\geq 2$. 
We can proceed exactly as for the proof of   Serre's duality (\cite[Theorem III.7.6]{Har71}) to show that the $\delta$-functor
$(\Ext^i(., \omega_C))_{i\geq 0}$ coincides with $(T^i)_{i\geq 0}$. Therefore
$\Ext^i(\sF, \omega_C)=0$ if $i>1$. 
\end{proof} 

\begin{proof}[Proof of Theorem~\ref{thm: Def(C,G)}] 
We compute the dimension of $\Ext^1(\Omega_C, \sO_C)^G$ by comparing
it with the dimension of $\Ext^1(\omega_C, \sO_C)^G$. 
By Lemma~\ref{lem:OtOoS}, we have an exact sequence. 
\[ 0 \to \Omega_{C, \tor}\to \Omega_C\to \sI \omega_C \to 0. \]
By \cite[Lemma~1.4]{DM69}, $\Ext^0(\Omega_C, \sO_C)=0$. Moreover
$ \Omega_{C, \tor}$ has finite support, hence $\Ext^0(\Omega_{C, \tor},
\sO_C)=0$.  Thus, taking the $\Ext^i(\cdot,\sO_C)$-groups 
we obtain $\Ext^0(\sI\omega_C, \sO_C)=0$ and the exact sequence 
\begin{equation}
  \label{eq:ext1}
  0 \to \Ext^1(\sI\omega_C, \sO_C) \to
  \Ext^1(\Omega_C, \sO_C) \to \Ext^1(\Omega_{C, \tor}, \sO_C) \to 0. 
\end{equation}
Similarly, taking $\Ext^i(., \sO_C)$ in the exact sequence 
$ 0 \to \sI\omega_C\to \omega_C \to \omega_C|_S\to 0$ 
we get the exact sequence
\begin{equation}
  \label{eq:ext2}
  0 \to \Ext^1(\omega_C|_S, \sO_C) \to \Ext^1(\omega_C, \sO_C)
  \to \Ext^1(\sI\omega_C, \sO_C) \to 0.  
\end{equation}
Combining 
Exact sequences \eqref{eq:ext1} and \eqref{eq:ext2}, and 
taking $G$-invariants (which is exact as $|G|\in k^*$), we get the
exact sequence 
\begin{multline*}
   0 \to \Ext^1(\omega_C|_S, \sO_C)^G \to \Ext^1(\omega_C, \sO_C)^G 
   \to \Ext^1(\Omega_C, \sO_C)^G \to  \\
   \to \Ext^1(\Omega_{C, \tor}, \sO_C)^G \to  0.
\end{multline*} 
It follows that
\begin{multline}\label{eq: Omega vs omega dim}
    \dim \Ext^1(\Omega_C, \sO_C)^G = \dim \Ext^1(\omega_C, \sO_C)^G -\dim \Ext^1(\omega_C|_S, \sO_C)^G \\
    + \dim \Ext^1(\Omega_{C, \tor}, \sO_C)^G.  
\end{multline}
By Lemma~\ref{lem:cext}(1) and Theorem~\ref{thm: pluricanonical G-inv}, we have
\[
\dim\Ext^1(\omega_C, \sO_C)^G=\dim\left(H^0(C, \omega_C^{\otimes
    2})^\vee\right)^G = 3(p_a(D)-1)+   \#B + 2\#\oS_2. 
\]
For each $P\in S$, by Lemma~\ref{lem:cext}(2), 
we have
\[ \dim \Ext^1(\omega_C|_S,
\sO_C)^G=\sum_{P\in S}\frac{|G_P|}{|G|} \dim (\omega_C^{\otimes 2}|_P)^{G_P}
=\# \oS.\] 
Similarly, the third term of \eqref{eq: Omega vs omega dim} is
\[ \dim \Ext^1(\Omega_{C, \tor}, \sO_C)^G
=\sum_{P\in S}\frac{|G_P|}{|G|}
  \dim ((\Omega_{C, \tor})_P \otimes \omega_C)^{G_P}  
  \]
  is 
exactly the number $\#\oS_3$ of $G$-orbits $G.P$  
of smoothable nodal points $P$ by Lemma~\ref{lem:OtOoS} (4) (Note that
  $(\Omega_{C, \tor})_P \otimes \omega_C$ has dimension $1$.)

Plugging these
observations into \eqref{eq: Omega vs omega dim}, we obtain the
desired formula \eqref{eq: Def(C,G)} for $ \dim \Ext^1(\Omega_C,
\sO_C)^G$. 
\end{proof}

\begin{rmk} 
    If $S=S_3$, that is, if $(C, G)$ is smoothable, then \eqref{eq: Def(C,G)} becomes a formula similar to the smooth case:
    \[
     \dim \Ext^1(\Omega_C, \sO_C)^G = 
     3(p_a(D)-1) + \#B+2\#\oS_2.
    \]
\end{rmk}

\subsection{Relations between \texorpdfstring{$H^1(C, \CC)$}{} and \texorpdfstring{$H^0(C, \omega_C)$}{}}
Let the notation be as in Set-up~\ref{setup: nodal}. In this subsection, we assume that the base field $k=\CC$. If $C$ is smooth, then the Hodge decomposition gives the desired relation between $H^1(C, \CC)$ and $H^0(C, \omega_C)$, as $G$-modules:
\[
H^1(C,\CC)\cong H^0(C, \omega_C) \oplus  H^1(C, \sO_C)\cong H^0(C, \omega_C) \oplus H^0(C, \omega_C)^\vee. 
\]
We explore relations between $H^1(C, \CC)$ and $H^0(C, \omega_C)$ when $C$ has at most nodal singularities.
There are two $G$-equivariant short exact sequences of sheaves of $\CC$-vector spaces:
\begin{equation}\label{eq: ses C_C}
    0\rightarrow \CC_C \rightarrow \nu_*\CC_{\tC} \rightarrow \oplus_{P\in S} \sS_P\rightarrow 0
\end{equation}
and
\begin{equation}\label{eq: ses omega_C}
      0\rightarrow  \nu_*\omega_{\tC}  \rightarrow \omega_C \rightarrow \oplus_{P\in S} \omega_C|_P\rightarrow 0  
\end{equation}
where $S$ is the set of singular points of $C$.

\begin{lem}
We have
\[
\left[H^1(C, \CC)\right] -\left[H^0(C, \omega_C)\right]  = \left[H^1(\tC, \CC)\right] - \left[H^0(\tC, \omega_{\tC})\right] = \left[ H^1(\tC, \sO_{\tC}) \right]
\]
\end{lem}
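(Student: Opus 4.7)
The plan is to compare the $G$-equivariant Euler characteristics arising from the two displayed short exact sequences, extract the $H^1$-terms using $G$-equivariant Serre duality, and then invoke the Hodge decomposition on the smooth curve $\tC$.

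First, I would apply additivity of $\chi_G$ to the sequences \eqref{eq: ses C_C} and \eqref{eq: ses omega_C}. Since $\nu$ is finite we have $\chi_G(\nu_*\CC_{\tC})=\chi_G(\CC_{\tC})$ and $\chi_G(\nu_*\omega_{\tC})=\chi_G(\omega_{\tC})$. The skyscraper sheaves $\oplus_{P\in S}\sS_P$ and $\oplus_{P\in S}\omega_C|_P$ have vanishing higher cohomology, so their $G$-equivariant Euler characteristics equal their $H^0$. By Lemma~\ref{lem: S_P}, $[\sS|_P]=[\omega_C|_P]$ in $R_\CC(G_P)$ for every $P\in S$, and Corollary~\ref{lem: conn decomp} assembles these over $G$-orbits to give $\chi_G(\oplus_P \sS_P)=\chi_G(\oplus_P \omega_C|_P)$ in $R_\CC(G)$. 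Summing the two resulting identities, the skyscraper contributions cancel and I obtain
\[
\chi_G(\CC_C)+\chi_G(\omega_C)=\chi_G(\CC_{\tC})+\chi_G(\omega_{\tC}).
\]

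Next, I would expand this identity using three $G$-equivariant facts. (i) The long exact sequence of \eqref{eq: ses C_C}, together with the vanishing of $H^i(\oplus_P \sS_P)$ for $i\geq 1$, produces a $G$-equivariant isomorphism $H^2(C,\CC)\cong H^2(\tC,\CC)$. (ii) Serre duality on the Cohen--Macaulay curves $C$ and $\tC$ is $G$-equivariant, so $[H^1(C,\omega_C)]=[H^0(C,\sO_C)^\vee]$ and $[H^1(\tC,\omega_{\tC})]=[H^0(\tC,\sO_{\tC})^\vee]$. (iii) Because $C$ is projective and reduced, $H^0(C,\sO_C)\cong H^0(C,\CC_C)\cong \CC[\pi_0(C)]$ as permutation representations of $G$, which are canonically self-dual; likewise for $\tC$. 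Substituting these into the summed Euler characteristic identity, the $H^0$ and $H^2$ contributions cancel on each side, leaving exactly
\[
[H^1(C,\CC)]-[H^0(C,\omega_C)]=[H^1(\tC,\CC)]-[H^0(\tC,\omega_{\tC})],
\]
which is the first equality.

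For the second equality I would apply the Hodge decomposition on the (possibly disconnected) smooth projective curve $\tC$, applied componentwise. Since $G$ acts holomorphically, the splitting
\[
H^1(\tC,\CC)\cong H^0(\tC,\omega_{\tC})\oplus H^1(\tC,\sO_{\tC})
\]
is $G$-equivariant, and yields $[H^1(\tC,\CC)]-[H^0(\tC,\omega_{\tC})]=[H^1(\tC,\sO_{\tC})]$.

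The main subtlety is the combination of $G$-equivariant Serre duality with the self-duality of permutation representations, which is what lets the $H^1(\omega)$-terms on both $C$ and $\tC$ be absorbed cleanly; once this identification is made, the rest is a direct additivity manipulation using Lemma~\ref{lem: S_P}.
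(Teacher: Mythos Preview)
Your proposal is correct and follows essentially the same approach as the paper: both proofs take the Euler-characteristic identities from the two short exact sequences \eqref{eq: ses C_C} and \eqref{eq: ses omega_C}, use Lemma~\ref{lem: S_P} to match the skyscraper terms, and then cancel the $H^0$ and $H^2$ contributions via the identifications $[H^2(C,\CC)]=[H^2(\tC,\CC)]$ and $[H^0(C,\CC)]=[H^1(C,\omega_C)]$ (which you justify more explicitly through Serre duality and self-duality of permutation modules). The only cosmetic difference is that you sum the two Euler characteristics first and then expand, while the paper expands each separately before combining.
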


\begin{proof}
By \eqref{eq: ses C_C}, we have
\begin{multline}\label{eq: cohomology stable curve and normalization}    
    \left[H^0(C, \CC)\right] - \left[H^1(C, \CC)\right] + \left[H^2(C, \CC)\right] \\
    = \left[H^0(\tC, \CC)\right] - \left[H^1(\tC, \CC)\right] +
    \left[H^2(\tC, \CC)\right] - \left[\bigoplus_{P\in S}\sS_P\right]
\end{multline}
By \eqref{eq: ses omega_C}, we obtain
\begin{multline}\label{eq: hol cohomology stable curve and normalization}    
    \left[H^0(C, \omega_C)\right] - \left[H^1(C, \omega_C)\right] \\
    = \left[H^0(\tC, \omega_{\tC})\right] - \left[H^1(\tC, \omega_{\tC})\right] + \left[\bigoplus_{P\in S}\omega_C|_P\right]
\end{multline}
Observe that
\begin{equation}\label{eq: identify permutation representation}
\begin{split}
\left[H^2(C, \CC)\right] = \left[H^2(\tC, \CC)\right],\quad \left[H^0(C,\CC)\right]=\left[H^1(C, \omega_C) \right],\\
\left[H^0(\tC,\CC)\right] = \left[H^1(\tC, \omega_{\tC}) \right],\quad \left[\bigoplus_{P\in S}\sS_P\right] = \left[\bigoplus_{P\in S}\omega_C|_P\right]
\end{split}
\end{equation}
Combining \eqref{eq: cohomology stable curve and normalization},
\eqref{eq: hol cohomology stable curve and normalization}, and \eqref{eq: identify permutation representation}, we obtain the desired equalities
\[
\left[H^1(C, \CC)\right] -\left[H^0(C, \omega_C)\right]  = \left[H^1(\tC, \CC)\right] - \left[H^0(\tC, \omega_{\tC})\right] = \left[ H^1(\tC, \sO_{\tC}) \right]
\]
\end{proof}

\begin{cor}
    If the components of $C$ are all rational, then $H^1(C, \CC) = H^0(C, \omega_C)$ as $G$-modules.
\end{cor}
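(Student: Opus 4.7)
The plan is to apply the preceding lemma directly: it identifies the difference $[H^1(C,\CC)] - [H^0(C,\omega_C)]$ with $[H^1(\tC, \sO_{\tC})]$, so the corollary reduces to showing this last class vanishes.

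First I would invoke the hypothesis that every irreducible component of $C$ is rational. The normalization $\tC = \bigsqcup_{i} \tC_i$ then decomposes as a finite disjoint union of copies of $\PP^1_\CC$. Since $H^1(\PP^1, \sO_{\PP^1}) = 0$, summing over components gives $H^1(\tC, \sO_{\tC}) = 0$, and therefore $[H^1(\tC, \sO_{\tC})] = 0$ in $R_\CC(G)$.

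Combining this vanishing with the lemma yields $[H^1(C,\CC)] = [H^0(C,\omega_C)]$ in $R_\CC(G)$. Because $\chara(\CC) = 0$, the class of a finite-dimensional $\CC[G]$-module in the Grothendieck group determines it up to isomorphism (see the discussion of characters at the end of Section~\ref{sec: Grothendieck ring}), so the equality of classes upgrades to an isomorphism $H^1(C,\CC) \cong H^0(C,\omega_C)$ of $G$-modules. There is no significant obstacle; the only point worth flagging is the passage from an equality in $R_\CC(G)$ to an isomorphism of representations, which is standard in characteristic zero.
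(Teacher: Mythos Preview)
Your proposal is correct and matches the paper's intended argument: the corollary is stated without proof immediately after the lemma, and the only content is precisely the observation you make, that $H^1(\tC,\sO_{\tC})=0$ when every component of $\tC$ is $\PP^1$. Your added remark about upgrading the equality in $R_\CC(G)$ to an isomorphism of $G$-modules in characteristic zero is a welcome clarification.
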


\begin{rmk}\label{thm: self dual}
For each $i\geq 0$, we have $H^i(C, \CC) \cong H^i(C, \CC)^\vee$ as $G$-modules, that is,  $H^i(C, \CC)$ is a self-dual $G$-representation. This is due to the fact that $G$ acts on $H^i(C, \ZZ)$, and $H^i(C, \CC) = H^i(C, \ZZ) \otimes_\ZZ \CC$, and real $G$-representations are self-dual (\cite[Theorem 31.21 and Definition 43.7]{CR62} and \cite[Section~3.5]{FH91}). 
\end{rmk}

\subsection{The induced action on the dual graph of a nodal \texorpdfstring{$G$}{}-curve}

Resume the notation from Set-up~\ref{setup: nodal}.
Let $C$ be a connected nodal curve over $k$, and $G\subseteq \Aut(C)$ a finite subgroup of automorphisms. One can define its dual graph $\Gamma=(V, E)$:
\begin{enumerate}
    \item Each irreducible component $C_i$ of $C$ corresponds to a vertex, denoted by $\langle C_i\rangle $.
    \item Each node $P$ of $C$ corresponds to an open edge, denoted by $\langle P\rangle $, connecting the vertices $C_i$ and $C_j$ such that $P \in C_i\cap C_j$. 
\end{enumerate}
We view $\Gamma$ as a cell complex with the $S$ as the set of the $0$-cells, and $\{C_i\mid 1\leq i\leq n\}$ as the set of $1$-cells. The cellular chain complex of $\Gamma$ reads
\[
C_\bullet(\Gamma): 0\rightarrow C_1(\Gamma) \rightarrow C_0(\Gamma)\rightarrow 0
\]
with $C_1(\Gamma)= \oplus_P\ZZ\langle P\rangle $ and $C_0(\Gamma) = \oplus_i \ZZ\langle C_i\rangle $. Note that there is an induced action of $\Aut(C)$ on $\Gamma$ and $C_\bullet(\Gamma)$. For a node $P\in C$, the stabilizer $G_{\langle P\rangle }$ of the edge $\langle P\rangle $ is the same as the stabilizer $G_P$ of the point $P$, and for $g\in G_P$, we have
\[
g(\langle P\rangle ) = 
\begin{cases}
    \langle P\rangle , & \text{ if $G_P$ preserves the local branches of $C\ni P$} \\
    -\langle P\rangle , & \text{ if $G_P$ interchanges the local branches of $C\ni P$}
\end{cases}
\]
This action does not make $\Gamma$ into a $G$-CW complex in the strict sense, because the points of the edges (or 1-cell) can have different stabilizers. Nevertheless, by the additivity of the Euler characteristic, we have an equality in the Grothendieck group $R_k(G)$:
\begin{equation}\label{eq: chain = homology}
    [C_0(\Gamma)\otimes_\ZZ k] - [C_1(\Gamma)\otimes_\ZZ k] = [H_0(\Gamma, k)] -  [H_1(\Gamma, k)].
\end{equation}

\begin{lem}
Let $\nu : \tC \rightarrow C$ be the normalization. Then there are
identifications of $G$-representations:
\begin{equation}\label{eq: chain = coherent homology}
    C_0(\Gamma)\otimes_\ZZ k\cong H^1(\tC, \omega_{\tC}),\quad C_1(\Gamma)\otimes_\ZZ k\cong \bigoplus_{P\in S} \omega_C|_P
\end{equation}
\end{lem}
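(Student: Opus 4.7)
The plan is to construct explicit $G$-equivariant isomorphisms by trivializing each one-dimensional summand canonically. Both $C_0(\Gamma)\otimes_\ZZ k$ and $H^1(\tC,\omega_{\tC})$ decompose along the irreducible components of $C$, while both $C_1(\Gamma)\otimes_\ZZ k$ and $\bigoplus_{P\in S}\omega_C|_P$ decompose along the nodes. The $G$-action on each side permutes summands by the same permutation actions (on $\{C_i\}$ and on $S$ respectively), so it suffices to match the $G_i$- or $G_P$-action on each individual summand.

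For the first isomorphism, I would decompose $H^1(\tC,\omega_{\tC}) = \bigoplus_i H^1(\tC_i,\omega_{\tC_i})$ along the connected components $\tC_i$ of $\tC$, which are in bijection with the vertices $\langle C_i\rangle$ of $\Gamma$. On each smooth proper connected curve $\tC_i$, the trace map $\mathrm{tr}_i\colon H^1(\tC_i,\omega_{\tC_i})\xrightarrow{\sim} k$ is canonical, and I would send $\mathrm{tr}_i^{-1}(1)\mapsto \langle C_i\rangle$. The setwise stabilizer $G_i = G_{\{\tC_i\}}$ acts trivially on $k\langle C_i\rangle$ (vertices carry no orientation) and trivially on $H^1(\tC_i,\omega_{\tC_i})$ by the canonicity of the trace, so $G$-equivariance is automatic.

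For the second isomorphism, fix $P\in S$ with $\nu^{-1}(P)=\{\tP_1,\tP_2\}$. Once an ordering of the preimages is chosen, the dualizing sheaf $\omega_C$ admits a canonical generator at $P$: the class $\omega_P\in \omega_C|_P$ of the rational 1-form on $\tC$ with simple poles at $\tP_1,\tP_2$ and residues $+1,-1$ respectively. Swapping $\tP_1\leftrightarrow \tP_2$ sends $\omega_P\mapsto -\omega_P$, exactly mirroring the effect on the oriented edge $\langle P\rangle$. So I would pick orderings at one representative per $G$-orbit in $S$, send $\langle P\rangle\mapsto \omega_P$, and extend $G$-equivariantly. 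Lemma~\ref{lem: S_P} then confirms the $G_P$-characters coincide at each node ($1_{G_P}$ on $S_1$, $\chi_P$ on $S_2$), so the map is well-defined and bijective.

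The main subtlety, though mild, is characteristic $2$, where the sign twists collapse to the trivial character simultaneously on both sides; Lemma~\ref{lem: S_P} already accounts for this, so no separate case analysis is required. The one point to be careful about is that the trace map and the residue pairing are natural enough under arbitrary automorphisms—not just those fixing a given point or component—which guarantees compatibility of the chosen trivializations across the entire $G$-orbits.
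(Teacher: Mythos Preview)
Your proof is correct and follows essentially the same strategy as the paper's: decompose both sides along components (for $C_0$) and along nodes (for $C_1$), then match the stabilizer actions summand by summand. The paper phrases this slightly more abstractly, writing each side as $\bigoplus_j \Ind_{G_{i_j}}^G 1_{G_{i_j}}$ (respectively $\bigoplus_j \Ind_{G_{P_j}}^G \omega_C|_{P_j}$) via Corollary~\ref{lem: conn decomp} and then identifying these induced representations, whereas you build the isomorphism explicitly by choosing canonical generators (the trace class, the residue class). Your explicit construction has the minor advantage of producing an actual map rather than an abstract isomorphism of induced modules, and makes the sign behavior at $S_2$-nodes transparent; the paper's version is terser and leans on the induction formalism already set up in Section~\ref{sec: prelim}.
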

\begin{proof}
Let $\{[C_{i_j}]\mid 1\leq j \leq l\}$ be a system of representatives of the $G$-orbits in the set of components $\{[C_i]\mid 1\leq i\leq n\}$. Then we have isomorphisms of $G$-representations
 \[
 H^1(\tC, \omega_{\tC}) = \bigoplus_{i} H^1(\tC_i, \omega_{\tC_i}) \cong \bigoplus_{1\leq j \le l} \Ind_{G_{[C_{i_j}]}}^G 1_{G_{[C_{i_j}]}} \cong  C_0(\Gamma)\otimes_\ZZ k.
 \]
 Let $\{P_j\mid 1\leq j\leq s\}$ be a system of representatives of the $G$-orbits in $S$. Then we have isomorphisms of $G$-representations
 \[
 \bigoplus_{P\in S} \omega_C|_P \cong\bigoplus_{j=1}^m\Ind_{G_{P_j}}^G \omega_C|_{P_j}  \cong C_1(\Gamma)\otimes_\ZZ k
 \]
\end{proof}



We may interpret the difference between $[H^0(C, \omega_C)]$ and $[H^0(\tC, \omega_{\tC})]$ using the action of $G$ on the dual graph $\Gamma$ of $C$.

\begin{cor}\label{cor: H^0(omega_C) vs H^0(omega_tC)}
    Let $C$ be a connected nodal curve and $G\subseteq\Aut(C)$ a
    finite subgroup. Let $\nu\colon \tC\rightarrow C$ be the
    normalization. Then we have the following equality in $R_k(G)$:
\[
[H^0(C, \omega_C)] = [H^0(\tC, \omega_{\tC})] + [1_G] - \chi_G(\Gamma, k )
\]
where $\chi_G(\Gamma, k ):= [H_0(\Gamma, k)] -  [H_1(\Gamma, k)]\in R_k(G)$.
\end{cor}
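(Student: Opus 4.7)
The plan is to start from the short exact sequence \eqref{eq: omegaC-omegatC} that was used repeatedly in earlier subsections, namely
\[
0 \to \nu_*\omega_{\tC} \to \omega_C \to \omega_C|_S \to 0,
\]
which is $G$-equivariant. Taking the associated long exact cohomology sequence yields a five-term $G$-equivariant exact sequence
\[
0 \to H^0(\tC,\omega_{\tC}) \to H^0(C,\omega_C) \to H^0(\omega_C|_S) \to H^1(\tC,\omega_{\tC}) \to H^1(C,\omega_C) \to 0.
\]
Since $C$ is connected, Serre duality gives $H^1(C,\omega_C)\simeq k$ with trivial $G$-action, so $[H^1(C,\omega_C)] = [1_G]$. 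Writing the alternating sum of classes in $R_k(G)$ gives
\[
[H^0(C,\omega_C)] = [H^0(\tC,\omega_{\tC})] + [1_G] + [\omega_C|_S] - [H^1(\tC,\omega_{\tC})].
\]

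The second step is to reinterpret the correction term $[\omega_C|_S] - [H^1(\tC,\omega_{\tC})]$ topologically via the dual graph. By the identifications \eqref{eq: chain = coherent homology} of $G$-representations,
\[
C_0(\Gamma)\otimes_\ZZ k \simeq H^1(\tC,\omega_{\tC}), \qquad C_1(\Gamma)\otimes_\ZZ k \simeq \bigoplus_{P\in S}\omega_C|_P = [\omega_C|_S],
\]
and by the additivity of the Euler characteristic of the cellular chain complex of $\Gamma$ as in \eqref{eq: chain = homology},
\[
[H^1(\tC,\omega_{\tC})] - [\omega_C|_S] = [C_0(\Gamma)\otimes k] - [C_1(\Gamma)\otimes k] = [H_0(\Gamma,k)] - [H_1(\Gamma,k)] = \chi_G(\Gamma,k).
\]
Substituting this into the previous equality immediately gives the desired identity
\[
[H^0(C,\omega_C)] = [H^0(\tC,\omega_{\tC})] + [1_G] - \chi_G(\Gamma,k).
\]

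The only subtle point is verifying that the chain-level isomorphisms used in \eqref{eq: chain = coherent homology} are really $G$-equivariant, which has already been established in the preceding lemma; after that, the corollary is a bookkeeping exercise combining a long exact sequence with an Euler-characteristic identity, and I do not anticipate any further obstacle.
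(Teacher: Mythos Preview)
Your proof is correct and follows essentially the same route as the paper: both start from the $G$-equivariant short exact sequence $0\to\nu_*\omega_{\tC}\to\omega_C\to\omega_C|_S\to 0$, take the long exact cohomology sequence, use $[H^1(C,\omega_C)]=[1_G]$, and then invoke the identifications \eqref{eq: chain = coherent homology} together with \eqref{eq: chain = homology} to rewrite $[H^1(\tC,\omega_{\tC})]-[\omega_C|_S]$ as $\chi_G(\Gamma,k)$. The only cosmetic difference is that you display the full five-term exact sequence explicitly, whereas the paper passes directly to the alternating-sum identity.
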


\begin{proof}
Taking the long exact sequence of cohomology groups 
associated to the short exact sequence \eqref{eq: omegaC-omegatC}
and noting that $[H^1(C,\omega_C)] = [1_G]$, we get
\[
  [H^0(C, \omega_C)] = [H^0(\tC, \omega_{\tC})] + [1_G] -
[H^1(\tC, \omega_{\tC})]+[\omega_C|_S]. 
\]
By \eqref{eq: chain = homology} and \eqref{eq: chain = coherent
  homology} we have $[H^1(\tC, \omega_{\tC})] -
[\omega_C|_S]=\chi_G(\Gamma,k)$. This proves the corollary.  
\end{proof}

\section{\texorpdfstring{$G$}{}-module structure on the singular cohomology of \texorpdfstring{$G$}{}-CW complexes}\label{sec: CW}
\begin{thm}\label{thm: CW for CW}
Let $X$ be an $n$-dimensional compact complex space, and
$G\subseteq \Aut(X)$ a finite subgroup of automorphisms. Let $Y=X/G$ be
the quotient space and $\pi\colon X\rightarrow Y$ the quotient
map. For each subgroup $H\subseteq G$, denote
\[
X_H:=\{ x \in X \mid \text{$G_ x$ conjugate to $H$} \} \, \text{ and }\, Y_H:=\pi(X_H).
\]
Let $H_1,\dots, H_s$ be a system of representatives of conjugacy classes of subgroups in $G$.
Then the following equality holds in 
$R_\QQ(G)$:
\begin{equation}\label{eq: CW for CS}
    \sum_{0\leq i\leq 2n}(-1)^i[H^i(X,\QQ)] = \sum_{1\leq j\leq s} \chi(Y_{H_j})\Ind_{H_j}^G [1_{H_j}].
\end{equation}
where for a complex space $Z$, $\chi(Z)$ denotes its topological Euler characteristic; if $Y_{H_j}=\emptyset$, we set $\chi(Y_{H_j})=0$.
\end{thm}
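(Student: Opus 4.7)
The plan is to prove the identity by an additivity argument for a $G$-equivariant compactly supported Euler characteristic, applied to the stratification of $X$ by stabilizer type, followed by a direct computation on each stratum using the free-action case.

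First, for any locally compact Hausdorff $G$-space $Z$ whose rational compactly supported cohomology is finite-dimensional in total, I would set
\[
\chi_G^c(Z) := \sum_{i\geq 0} (-1)^i [H^i_c(Z,\QQ)] \in R_\QQ(G).
\]
The key property is additivity: if $Z = U \sqcup F$ with $U$ open, $F$ closed, and both $G$-invariant, then $\chi_G^c(Z) = \chi_G^c(U) + \chi_G^c(F)$, which follows from the long exact sequence of compactly supported cohomology. Since $X$ is compact, $\chi_G^c(X) = \sum_i (-1)^i [H^i(X,\QQ)]$. Working in the semianalytic category (complex analytic $G$-spaces admit $G$-equivariant subanalytic triangulations), each stratum $X_{H_j}$ is a finite union of locally closed subanalytic $G$-stable subsets, hence has finite-dimensional compactly supported cohomology.

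Second, I would stratify $X=\bigsqcup_j X_{H_j}$ and use additivity repeatedly (going from open strata outward along the closure order) to obtain
\[
\chi_G^c(X) = \sum_{1\leq j\leq s} \chi_G^c(X_{H_j}).
\]
Fix $H = H_j$ and write $N = N_G(H)$, $X_{(H)} = \{x \in X : G_x = H\}$. Then $X_H = G\cdot X_{(H)}$, the natural map $G \times_N X_{(H)} \to X_H$ is a $G$-equivariant homeomorphism, and a compactly supported analogue of Lemma~\ref{lem:ind_res}(1) gives
\[
\chi_G^c(X_H) = \Ind_N^G \chi_N^c(X_{(H)}).
\]

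Third, on $X_{(H)}$ the subgroup $H$ acts trivially by definition, and $N/H$ acts freely (if $n \in N$ fixed $x \in X_{(H)}$ then $n \in G_x = H$). Hence $\pi_{(H)}\colon X_{(H)} \to Y_H$ is a topological covering map of finite degree $|N/H|$. The sheaf $(\pi_{(H)})_!\QQ_{X_{(H)}}$ on $Y_H$ carries a natural $N/H$-action and is locally isomorphic, as an $N/H$-sheaf, to the constant sheaf $\QQ[N/H]$; trivialising over a locally finite cover and using that $\QQ[N/H]$ is a free $\QQ[N/H]$-module, one concludes that
\[
\chi_{N/H}^c(X_{(H)}) = \chi(Y_H)\,[\QQ[N/H]] \in R_\QQ(N/H).
\]
Pulling back along $N \to N/H$ and using Lemma~\ref{lem:ind_res}(2), one obtains $\chi_N^c(X_{(H)}) = \chi(Y_H)\,\Ind_H^N [1_H]$, and by transitivity of induction,
\[
\chi_G^c(X_H) = \chi(Y_H)\,\Ind_H^G [1_H].
\]
Summing over $j$ yields the desired formula.

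The main obstacle I expect is the passage from the free-action case to the regular-representation formula $\chi_{N/H}^c(X_{(H)}) = \chi(Y_H)[\QQ[N/H]]$: one must ensure the covering $X_{(H)} \to Y_H$ is well enough behaved (locally trivial in the appropriate category) that the sheaf-theoretic trivialisation goes through, and that the compactly supported Euler characteristic of a free $G$-space genuinely splits as a tensor product. Once this is granted, the stratification argument and induction-restriction bookkeeping are routine.
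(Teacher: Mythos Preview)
Your argument is correct and leads to the theorem, but it is a genuinely different route from the paper's own proof. The paper proceeds much more directly: it invokes (citing \cite{CL18}) a finite $G$-equivariant triangulation $\sT$ of $X$ with the property that for every simplex $\Delta$ and every $x$ in its relative interior $\Delta^0$, one has $G_x=G_{\Delta^0}=G_{\{\Delta\}}$. Then each $X_{H_j}$ is a union of open simplices, and the identity $\sum_i(-1)^i[H^i(X,\QQ)]=\sum_i(-1)^i[C_i(\sT)]$ in $R_\QQ(G)$ reduces everything to counting orbits of simplices. Since the setwise stabilizer of each simplex fixes it pointwise, the $G$-module spanned by an orbit $G.\Delta$ is the permutation module $\Ind_{G_{\{\Delta\}}}^G[1_{G_{\{\Delta\}}}]$; grouping orbits by stabilizer type and observing that the alternating count of $G$-orbits of simplices inside $X_{H_j}$ is $\chi(Y_{H_j})$ gives \eqref{eq: CW for CS} immediately.

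Your approach via compactly supported Euler characteristics, the orbit-type stratification, and the induced-space identification $X_H\cong G\times_N X_{(H)}$ is more functorial and would extend to settings where a triangulation is unavailable. The price is the step you yourself flag: the formula $\chi_{N/H}^c(X_{(H)})=\chi(Y_H)[\QQ[N/H]]$ for a free $(N/H)$-action. Your justification via ``trivialising over a locally finite cover'' is not quite a proof; the local system $(\pi_{(H)})_!\QQ$ can have nontrivial monodromy, so local triviality alone does not suffice. The clean way to close this is either (i) a character computation---for $g\neq 1$ the map has no fixed points and its Lefschetz number on $H^*_c$ vanishes, forcing the class to be a multiple of $[\QQ[N/H]]$, with the multiple read off from degrees---or (ii) the very triangulation the paper uses, which makes the cellular compactly supported chain complex of $X_{(H)}$ a complex of free $\QQ[N/H]$-modules. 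Either fills your gap, but note that option~(ii) essentially collapses your argument back into the paper's.
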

\begin{proof}
As in the proof of \cite[Theorem~2.3]{CL18}, there is a finite triangulation $\sT$ of $X$ such that for each simplex $\Delta\in \sT$ and for all $x\in \Delta^0$, we have 
\[
G_x = G_{\Delta^0} =G_{\{\Delta\}},
\]
where $\Delta^0$ denotes the relative interior of $\Delta$. Then, $X_{H_j}$ is a union of relatively open simplices of the triangulation $\sT$ for each $1\leq j \leq s$.

For each $0\leq i\leq 2n$, let $\sT_i\subseteq \sT$ be the set of $i$-dimensional simplices, and 
\[
C_i(\sT)=\bigoplus_{\Delta\in \sT_i} \QQ [\Delta]
\]
the $i$-th cellular chain group. Then there is an induced action of $G$ on  $C_i(\sT)$, and an equality in $R_\QQ(G)$:
\begin{equation}\label{eq: cohomology vs chain}
\sum_{0\leq i\leq 2n}(-1)^i[H^i(X,\QQ)]  = \sum_{0\leq i\leq 2n}(-1)^i [C_i(\sT)].    
\end{equation}
For each orbit of a simplex $\Delta\in\sT$, we obtain on the right-hand side of \eqref{eq: cohomology vs chain} a direct summand $\Ind_{H_j}^G [1_{H_j}]$, where $H_j\subseteq G$ is the subgroup conjugate to $G_{\{\Delta\}}$. Summing over all the orbits of $G$ in $\sT$ and sorting out according to the conjugacy classes of stabilizers, we obtain the desired equality \eqref{eq: CW for CS}.
\end{proof}

\begin{cor}\label{cor: chi Q}
Let $C=\bigcup_{1\leq i\leq n} C_i$ be a compact nodal curve over $\CC$, and
$G\subseteq\Aut(C)$ a finite subgroup of automorphisms. Let $\pi\colon
C\rightarrow D:=C/G$ be the quotient map. Let $S$ be the singular locus of $C$. Set $T:=\{P\in C\setminus S\mid
G_P\supsetneq I_i \text{ for } C_i\ni P\}$ and $A:=S\cup T$. Write $D=\bigcup_{1\leq j\leq m} D_j$ as the union of irreducible components. Then we have
\begin{equation}\label{eq: CW for constant sheaf}
    \sum_{i=0}^{2}(-1)^i[H^i(C,\QQ)] = \sum_{j=1}^m \chi(D_j\setminus\pi(A))\Ind_{I_i}^G \left[1_{I_i}\right] + \sum_{P\in A'} \Ind_{G_{P}}^G \left[1_{G_{P}}\right],
\end{equation}
where $A'$ denotes a set of representatives of the $G$-orbits in $A$.
\end{cor}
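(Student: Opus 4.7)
The plan is to specialize Theorem~\ref{thm: CW for CW} to $X=C$ (a compact complex space of dimension $n=1$) and repackage its right-hand side in the form asserted by the corollary. The key is to describe the strata $X_H = \{x \in C : G_x \sim H\}$ and their images $Y_H = \pi(X_H)$ in terms of the components $D_j$ of $D$ and the distinguished set $A = S \cup T$.

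First I would verify that for every $P \in C \setminus A = C \setminus (S \cup T)$, the stabilizer $G_P$ equals the generic inertia $I_i$ of the unique component $C_i$ containing $P$: the containment $I_i \subseteq G_P$ is automatic because $I_i$ fixes $C_i$ pointwise, and $P \notin T$ rules out strict containment. Consequently, for each conjugacy class $[H]$ of subgroups of $G$, the stratum $X_H$ decomposes as
\[
X_H = \bigsqcup_{i \,:\, I_i \sim H}(C_i \setminus A) \; \sqcup \; \{P \in A : G_P \sim H\}.
\]
Since components of $C$ in the same $G$-orbit share the same image in $D$, while distinct $G$-orbits of components produce distinct components of $D$, choosing a set $\sI([H])$ of representatives of the $G$-orbits of components $C_i$ with $I_i \sim H$ yields
\[
Y_H = \bigsqcup_{i \in \sI([H])}\bigl(D_{j(i)} \setminus \pi(A)\bigr) \; \sqcup \; \{\pi(P) : P \in A',\, G_P \sim H\},
\]
where $D_{j(i)} = \pi(C_i)$.

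Topological Euler characteristic is additive on this constructible decomposition, and each isolated point $\pi(P)$ contributes $1$, so
\[
\chi(Y_H) = \sum_{i \in \sI([H])} \chi\bigl(D_{j(i)} \setminus \pi(A)\bigr) + \#\{P \in A' : G_P \sim H\}.
\]
Substituting into Theorem~\ref{thm: CW for CW} and using that $\Ind_{I_i}^G [1_{I_i}] \cong \Ind_H^G [1_H]$ whenever $I_i$ is conjugate to $H$ (and similarly for each $\Ind_{G_P}^G [1_{G_P}]$), the double sum over conjugacy classes $[H]$ collapses into a single sum indexed by $G$-orbits of components of $C$ (equivalently, by the components $D_j$ of $D$) together with a sum over $P \in A'$, producing exactly the right-hand side of the corollary. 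I expect the main obstacle to be purely bookkeeping: verifying that each component $D_j$ appears exactly once when one re-indexes from "conjugacy classes $[H]$" to "orbits of irreducible components", and that $\Ind_{I_i}^G [1_{I_i}]$ is well-defined up to isomorphism as $i$ varies over a given orbit.
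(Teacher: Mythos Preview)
Your proposal is correct and is exactly the intended derivation: the paper states this result as an immediate corollary of Theorem~\ref{thm: CW for CW} without supplying a separate proof, and your stratification argument (identifying $G_P=I_i$ on $C\setminus A$, then regrouping the sum over conjugacy classes into a sum over components $D_j$ of $D$ plus a sum over orbit representatives in $A$) is precisely the specialization one has to carry out. The bookkeeping points you flag---that $\pi(C_i\setminus A)=D_{j(i)}\setminus\pi(A)$, that distinct $D_j\setminus\pi(A)$ are disjoint because components of $D$ can only meet inside $\pi(S)\subseteq\pi(A)$, and that $\Ind_{I_i}^G[1_{I_i}]$ depends only on the conjugacy class of $I_i$---are the only things to check, and you have them right.
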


The following corollary is a direct generalization of \cite[Proposition~2]{Bro87}, which deals with the case when $C$ is a connected smooth curve.
\begin{cor}\label{cor: chi Q irreducible}
Let the notation be as in Corollary~\ref{cor: chi Q}. Suppose that $I_i$ is trivial for each component $C_i$. Then
\[
\sum_{i=0}^{2}(-1)^i[H^i(C,\QQ)] = \sum_{j=1}^m \chi(D_j\setminus\pi(A))\left[\QQ[G]\right] + \sum_{P\in A'} \Ind_{G_{P}}^G \left[1_{G_{P}}\right].
\]
In particular, if $C$ is smooth and connected, then
\[
[H^1(X,\QQ)] = (2g(D)-2+\#\pi(A))[\QQ[G]] + \sum_{P\in A'} \Ind_{G_{P}}^G \left[1_{G_{P}}\right]
\]
\end{cor}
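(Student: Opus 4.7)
The plan is to deduce both assertions directly from Corollary~\ref{cor: chi Q} by specializing the hypothesis on inertia groups and then handling the low-degree cohomology explicitly in the smooth case.

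For the first equality, I would substitute the hypothesis that the inertia group $I_i$ of each irreducible component $C_i$ is trivial into the formula of Corollary~\ref{cor: chi Q}. Since $I_i = \{1\}$, one has $\Ind_{I_i}^G[1_{I_i}] = \Ind_{\{1\}}^G[1_{\{1\}}] = [\QQ[G]]$ uniformly for every $i$, so the first sum in Corollary~\ref{cor: chi Q} collapses to $\sum_{j=1}^{m}\chi(D_j\setminus\pi(A))[\QQ[G]]$. The equality is then immediate, and no Chevalley--Weil computation beyond what is already in Theorem~\ref{thm: CW for CW} is required at this step.

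For the ``in particular'' statement I would restrict to $C$ smooth, so that $S=\emptyset$ and $A=T$. Since $G$ acts on the compact Riemann surface $C$ by holomorphic, hence orientation-preserving, homeomorphisms, both $H^0(C,\QQ)$ and $H^2(C,\QQ)$ agree with the permutation representation of $G$ on the connected components of $C$; in the connected case they are each $[1_G]$. The topological quotient $D=C/G$ is again a smooth compact Riemann surface, so $\chi(D_j)=2-2g(D_j)$ for each connected component $D_j$, and $\chi(D_j\setminus\pi(A)) = 2-2g(D_j) - \#(\pi(A)\cap D_j)$. Applying the identity $\sum_{i=0}^{2}(-1)^i[H^i(C,\QQ)] = [H^0(C,\QQ)] - [H^1(C,\QQ)] + [H^2(C,\QQ)]$ to the formula just established, one solves for $[H^1(C,\QQ)]$ in terms of the regular representation, the induced modules $\Ind_{G_P}^G[1_{G_P}]$, and the trivial contributions from $H^0,H^2$, giving the asserted formula after collecting terms.

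The only non-trivial point is the bookkeeping in the disconnected case: when $G$ permutes several connected components of $C$, the modules $[H^0(C,\QQ)]$ and $[H^2(C,\QQ)]$ must be expressed as induced representations from the stabilizers $G_{\{C_i\}}$ of the components, and these contributions need to be reconciled with the regular-representation terms in the statement of Corollary~\ref{cor: chi Q}. In the connected case this reduces to a single short algebraic manipulation in $R_\QQ(G)$, with no further input needed beyond the first part already proved.
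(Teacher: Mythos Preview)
Your approach is exactly what the paper intends: Corollary~\ref{cor: chi Q irreducible} carries no separate proof in the paper and is meant to follow from Corollary~\ref{cor: chi Q} by the substitution $I_i=\{1\}$, so $\Ind_{I_i}^G[1_{I_i}]=[\QQ[G]]$, together with the obvious identification of $H^0$ and $H^2$ in the smooth case.

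One point to flag: when you actually carry out the ``collecting terms'' step in the smooth connected case, you get
\[
[H^1(C,\QQ)] \;=\; 2[1_G] + (2g(D)-2+\#\pi(A))[\QQ[G]] - \sum_{P\in A'} \Ind_{G_P}^G[1_{G_P}],
\]
which is Broughton's formula. The displayed ``in particular'' in the statement omits the $2[1_G]$ and has the wrong sign on the induced term, so your method yields the correct formula rather than the one literally printed; you should not expect the terms to match as written.
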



\noindent{\bf Acknowledgements.} The first named author thanks
Olivier Brinon for instructive discussions on representation theory,
and Tianyuan Mathematical Center in Southeast China where part of this
work is done. 
The second named author would like to thank Professors Lie Fu and Qizheng Yin for discussions related to the Chevalley--Weil formula.  We would also like to thank a referee for pointing out an error in the proof of Theorem~\ref{thm: sm} in a previous version of the paper.

\end{document}